\documentclass[12pt]{amsart}
\usepackage[T1]{fontenc}
\theoremstyle{remark}
\usepackage{units}
\usepackage{mathrsfs}
\usepackage{amsbsy}
\usepackage{amstext}
\UseRawInputEncoding
\usepackage{xcolor}
\usepackage[english]{babel}
\usepackage{calc}
\numberwithin{equation}{section}
\numberwithin{figure}{section}
\theoremstyle{plain}
\newtheorem{thm}{\protect\theoremname}[section]
\theoremstyle{definition}
\newtheorem{defn}[thm]{\protect\definitionname}
\usepackage{amsthm}
\usepackage{amssymb}
\usepackage[a4paper,margin=20mm]{geometry}
\usepackage{bm}

\makeatletter

\newcommand{\lyxmathsym}[1]{\ifmmode\begingroup\def\b@ld{bold}
  \text{\ifx\math@version\b@ld\bfseries\fi#1}\endgroup\else#1\fi}

\newtheorem{rem}[thm]{\protect\remarkname}
\theoremstyle{plain}
\newtheorem{lem}[thm]{\protect\lemmaname}
\theoremstyle{plain}
\newtheorem{prop}[thm]{\protect\propositionname}
\theoremstyle{plain}

\theoremstyle{remark}
\newtheorem*{rem*}{\protect\remarkname}

\usepackage{erewhon} 

\usepackage{amsmath,amssymb,amscd,amsthm,amsfonts}
\usepackage{newtxmath}     

\usepackage[T1]{fontenc}
\usepackage[utf8]{inputenc}

\usepackage{graphicx}
\usepackage{mathtools}
\usepackage{url}
\usepackage{enumerate}
\usepackage{multirow,bm}
\usepackage[all,color]{xy}        
\usepackage{tikz-cd}
\usepackage{color,xcolor}
\usepackage[normalem]{ulem}
\usepackage{verbatim}
\usepackage{colortbl}
\usepackage{float}
\usepackage{amsmath}
\allowdisplaybreaks

\usepackage{chngcntr}

\theoremstyle{definition}

\numberwithin{equation}{section}

\newcommand{\cyr}{%
  \renewcommand\rmdefault{wncyr}%
  \renewcommand\sfdefault{wncyss}%
  \renewcommand\encodingdefault{OT2}%
  \normalfont\selectfont
}
\DeclareTextFontCommand{\textcyr}{\cyr}

\makeatletter
\def\@settitle{%
  \begin{center}
    \baselineskip14\p@\relax
    \LARGE\@title
  \end{center}}
\makeatother

\newcommand{\sN}{\mathcal{N}}
\newcommand{\sO}{\mathcal{O}}

\newcommand{\mA}{\mathbb{A}}
\newcommand{\mC}{\mathbb{C}}

\newcommand{\mN}{\mathbb{N}}
\newcommand{\mP}{\mathbb{P}}
\newcommand{\mQ}{\mathbb{Q}}

\newcommand{\rank}{\mathrm{rank}\,}

\newcounter{myparagraph}[subsection]

\author{Hiromichi Takagi}
\address{Department of Mathematics, Gakushuin University, 
	Mejiro, Toshima-ku, Tokyo 171-8588, Japan}
\email{hiromici@math.gakushuin.ac.jp}

\makeatother

\providecommand{\corollaryname}{Corollary}
\providecommand{\definitionname}{Definition}
\providecommand{\lemmaname}{Lemma}
\providecommand{\propositionname}{Proposition}
\providecommand{\remarkname}{Remark}
\providecommand{\theoremname}{Theorem}

\begin{document}
\title{\textbf{Key variety construction of Sarkisov links for prime $\mQ$-Fano threefolds
of codimension four associated to Type ${\rm II}_{2}$ projections}}
\begin{abstract}
In our paper \cite{Tak6}, we constructed eight families of quasi-smooth
prime $\mathbb{Q}$-Fano threefolds, anticanonically embedded in codimension
four, using weighted projectivizations of the $14$-dimensional affine
variety $\Pi_{\mathbb{A}}^{14}$or its cone. Let $\widehat{f}\colon\widehat{X}\to X$
be the unique divisorial extraction at one specified singularity of
maximal index. In this paper, we explicitly construct the Sarkisov
link starting from $\widehat{f}$ for $X$ belonging to seven of these
families. This is achieved by using the Sarkisov link associated with
the weighted projectivization of $\Pi_{\mathbb{A}}^{14}$ or its cone
corresponding to $X$. As a consequence, we show that the Sarkisov
link ends with either a fibration whose general fiber is a del Pezzo
surface of degree one or a divisorial contraction of type $(2,1)$
to weighted complete intersections of codimension at most two. We
also provide more detailed descriptions of these Sarkisov links.
\end{abstract}

\maketitle
2020\textit{ Mathematics subject classification}: 14J45, 14E30

\textit{Key words and phrases}: $\mQ$-Fano $3$-fold, key variety,
Sarkisov link

\markboth{Sarkisov links for $\mQ$-Fano 3-folds via a key variety}{Hiromichi
Takagi}

\tableofcontents

\section{\textbf{Introduction}}

\subsection{Background}

A normal complex projective variety with at most terminal singularities and with ample anticanonical divisor is called a \textit{Fano variety}. 
In this paper, we restrict ourselves to $3$-dimensional Fano varieties (Fano $3$-folds) that are quasi-smooth and whose anticanonical divisor generates the group of numerical equivalence classes of Weil divisors. 
Such varieties will simply be called \textit{prime Fano $3$-folds} in this paper (cf.\ \cite{Mu}).

The classification and birational geometry of prime $\mQ$-Fano $3$-folds have been studied extensively. 
The present work is part of this line of research, and one of its main sources is \cite{BZ}, in which  the authors study prime $\mQ$-Fano $3$-folds defined by the Pfaffians of the five principal $4 \times 4$ minors of a $5 \times 5$ alternating matrix. 
Each of these varieties has codimension $3$ in the ambient weighted projective space and admits a birational map called a Type I projection. 
The Sarkisov links for these varieties associated with Type I projections are classified from the viewpoint of toric Sarkisov links in the ambient weighted projective spaces. 
This work is also important as it establishes a foundation for applying toric Sarkisov links to explicit birational geometry.

Subsequently, in \cite{Ca}, a similar approach was used to study prime $\mQ$-Fano $3$-folds of codimension $4$ in the ambient weighted projective spaces that admit so-called Tom Type I projections, and the corresponding Sarkisov links were classified (see also \cite{Tak1}). 
We also note here that in \cite{Tak8}, prime $\mQ$-Fano $3$-folds admitting Type I projections that are not of  Tom type are constructed using Sarkisov links.

In this paper, we consider seven families of prime $\mQ$-Fano $3$-folds of codimension $4$ in the ambient weighted projective spaces that admit birational maps called Type ${\rm II}_{2}$ projections in \cite{Tay}, and we classify the associated Sarkisov links from the viewpoint of toric Sarkisov links.

We briefly explain how the prime $\mQ$-Fano $3$-folds studied in this paper fit into the broader picture of codimension $4$ prime $\mQ$-Fano $3$-folds. 
According to the online database \cite{GRDB}, there are $145$ possible Hilbert series for codimension $4$ prime $\mQ$-Fano $3$-folds. 
Among them, $143$ are expected to have defining ideals generated by $9$ elements with $16$ linearly independent relations among them. 
Moreover, for each of these $143$ cases, it is expected that there exist two topologically distinct families. 
In the case of Type I projections, these correspond to the two types called Tom and Jerry in \cite{BKR}. 
From the viewpoint of key varieties, these correspond to $\mP^{2}\times\mP^{2}$-fibrations or $\mP^{1}\times\mP^{1}\times\mP^{1}$-fibrations. 
From the viewpoint of algebraic structures, they correspond to quadratic Jordan algebras of cubic forms or Freudenthal triple systems. 
Although these three viewpoints do not correspond perfectly, they are closely related. 
The classification of codimension $4$ prime $\mQ$-Fano $3$-folds is still incomplete, but from the perspective of constructing examples, substantial progress has been made along these lines.

The prime $\mQ$-Fano $3$-folds studied in this paper were constructed in \cite{Tak6}. 
They are obtained as weighted complete intersections in the weighted projectivizations of the affine variety $\Pi_{\mA}^{14}$ or in its cone obtained by adding a free coordinate of weight $1$. The affine variety $\Pi_{\mA}^{14}$ was constructed in two ways; by Type $\rm{II}_2$-unprojection in \cite{Tak2}, and by quadratic Jordan algebra of cubic form in \cite{Tak5}. It is related with $\mP^2\times \mP^2$-fibration.
For a prime $\mQ$-Fano $3$-fold $X$, we call the weighted projectivization of $\Pi_{\mA}^{14}$ or its cone that realizes $X$ as a weighted complete intersection the \textit{key variety} of $X$. 
In this way, eight families of prime $\mQ$-Fano $3$-folds are obtained, but one of them is shown to be birationally superrigid in \cite{O}. 
Therefore, in this paper we treat the remaining seven families.

Since these prime $\mQ$-Fano $3$-folds are constructed via key varieties, their Sarkisov links can be obtained by first constructing the Sarkisov links of the key varieties using toric Sarkisov links, and then restricting them to $3$-folds. 
This viewpoint was developed in \cite{Tak3, Tak4, Tak8}, and we also follow it in the present paper, which allows us to treat the construction of Sarkisov links of prime $\mQ$-Fano $3$-folds in a more unified way.

\subsection{Main results}

In this subsection, we state the main results of this paper. 
Let $\Pi_{\mP}^{13}$ or $\Pi_{\mP}^{14}$ denote the weighted projectivization of $\Pi_{\mA}^{14}$ or its cone, respectively, which realize a prime $\mQ$-Fano $3$-fold $X$ considered in this paper as a weighted complete intersection. 
When we treat them uniformly, we denote them collectively by $\Pi_{\mP}$. Then $X$ satisfies one of the following conditions \eqref{eq:TypeDp} or \eqref{eq:TypeDiv} (these are restatements of the results of \cite{Tak6}, with type names assigned):

\begin{align}
\textbf{\text{Type\,Dp}}: \quad 
&  X=\Pi_{\mP}^{14}\cap(2)^{2}\cap(3)\cap(4)^{2}\cap(5)\cap(6)^{2}\cap(d-3)\cap(d-2)\cap(d-1) \label{eq:TypeDp}\\
& \hspace{2em} (4\le d\le 7) \notag\\
\textbf{\text{Type\,Div}}: \quad 
&  X=\Pi_{\mP}^{13}\cap(2)^{3}\cap(3)^{3}\cap(4)\cap(d-1)\cap(d)\cap(d+1) \label{eq:TypeDiv}\\
& \hspace{2em} (2\le d\le 4) \notag
\end{align}
In terms of the numbering in \cite{GRDB}, the cases of Type~Dp with $d=7,6,5,4$ correspond to Nos.~501, 512, 550, and 872, respectively, while the cases of Type~Div with $d=4,3,2$ correspond to Nos.~577, 878, and 1766, respectively. 
In the case of No.~872 ($d=4$), the variety $X$ is constructed in \cite{Tak6} as a weighted complete intersection in the weighted projectivization $\Pi_{\mP}^{13}$ of $\Pi_{\mA}^{14}$. 
However, in this paper, we instead consider its cone $\Pi_{\mP}^{14}$ and describe $X$ as a weighted complete intersection in $\Pi_{\mP}^{14}$ by cutting it with a hypersurface of weight $1$ that does not pass through the vertex of the cone. 
This allows us to treat No.~872 uniformly as a case of Type~Dp.

The variety $\Pi_{\mP}$ has a coordinate $s_{1}$ (to be described later), and it has the following singularity at the $s_{1}$-point:
\begin{equation}
\begin{cases}
\frac{1}{d+1}(1^{2},2^{2},3,4^{2},5,6^{2},d-3,d-2,d-1,d)\text{-singularity}: &  \Pi_{\mP}=\Pi_{\mP}^{14},\\
\frac{1}{d+1}(1^{2},2^{3},3^{3},4,d-1,d^{2},d+1)\text{-singularity}: &  \Pi_{\mP}=\Pi_{\mP}^{13}.
\end{cases}
\label{eq:s1-pt}
\end{equation}
Let $\widehat{f}_{\Pi}\colon \widehat{\Pi}\to \Pi_{\mP}$ denote the weighted blow-up at the $s_{1}$-point with these weights.

Under the above preparation, the Sarkisov links for $\Pi_{\mP}^{13}$ and $\Pi_{\mP}^{14}$ can be described as follows.

\begin{thm}[\textbf{Sarkisov links for key varieties}]
\label{thm:keySarkisov}

The Sarkisov link starting from $\widehat{f}_{\Pi}$ 
\begin{equation}
\label{eq:PiSarkisov}
\xymatrix{
& \widehat{\Pi} \ar@{-->}[rr] \ar[dl]_{\widehat{f}_{\Pi}} && \widetilde{\Pi} \ar[dr]^{\widetilde{f}_{\Pi}}\\
\Pi_{\mP} &&&& \mP,
}
\end{equation}
is constructed, and satisfies the following properties in the cases of \textup{Type~Dp} and \textup{Type~Div}, respectively:

\vspace{3pt}

\noindent \textbf{\textup{Type~Dp:}} 
The map $\widehat{\Pi}\dashrightarrow \widetilde{\Pi}$ is the composition of one flop and one flip. 
The flip is a toric flip of type $(1^{2},2^{2},3,4^{2},5,6^{2},d-3,d-1,d, \ -2,-1)$. 
We have $\,\mP=\mP^{1}$, and $\widetilde{f}_{\Pi}$ is a locally trivial $\,\mP(1^{2},2^{3},3^{2},4^{2},5,6,d-3,d-2,d-1)$-bundle. 
The isomorphism class of this bundle is completely determined since the transition functions over $\,\mP^{1}$ are given in Proposition~\ref{prop:transition-fcn}.

\vspace{3pt}

\noindent \textbf{\textup{Type~Div:}} 
The map $\,\widehat{\Pi}\dashrightarrow \widetilde{\Pi}$ is a flop. 
The variety $\,\mP$ is the weighted projective space $\mP(1^{4},2^{4},3^{3},d-1,d^{2})$. 
The morphism $\widetilde{f}_{\Pi}$ is a divisorial contraction whose exceptional divisor is mapped to a curve. 
The image $\,C$ of the exceptional divisor is a conic contained in $\mP(1,1,1)\simeq \mP^{2}$ inside $\mP$, and the exceptional divisor is a locally trivial $\mP(1^{2},2^{3},3^{3},4,d-1,d,d+1)$-bundle over $C$.
\vspace{3pt}

Moreover, $\,\Pi_{\mP}$ has only terminal singularities.
\end{thm}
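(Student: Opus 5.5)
The plan is to realize everything torically. First I embed $\Pi_{\mP}$ in its ambient weighted projective space $\mP(\text{weights of }\Pi_{\mP})$ via the explicit equations of $\Pi_{\mA}^{14}$ from \cite{Tak2,Tak5}. These are the $2\times 2$-minor type equations coming from the quadratic Jordan algebra of cubic forms, with $s_1$ as the distinguished coordinate. The weighted blow-up $\widehat{f}_{\Pi}$ at the $s_1$-point is then the restriction of the toric weighted blow-up of the ambient space with the weights listed in \eqref{eq:s1-pt}. That toric blow-up is a rank two toric variety $T$ given by a $2\times N$ weight matrix, with one row the original grading and one row the blow-up grading. Its Mori chamber decomposition is read off from the ray positions of the columns in $\mathbb{R}^2$. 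The toric $2$-ray game on $T$ gives a chain of toric flips or flops, ending in a toric fibration or divisorial contraction. I then take the strict transform of $\Pi_{\mP}$ under each step.

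\textbf{Key steps.}
\begin{enumerate}
\item Write down the $2\times N$ matrix and order the columns by slope. Coordinates on a common ray are the ones whose loci get flipped or contracted. In Type~Dp, after $s_1$ the next ray carries the coordinates of weight $(\dots,-2,-1)$. This gives the flop, then the flip of the stated type, and finally a ray with two coordinates, which yields the $\mP^1$-fibration. In Type~Div, the last ray carries three coordinates spanning the $\mP(1,1,1)$, and there is no flip.
\item Check that $\widehat{\Pi}$ and its transforms are relatively Mori-dream. This needs that the strict transforms meet the unstable loci in the expected dimension, and that the restricted maps are small (flops where $K$ is trivial on the relevant curves, flips where it is negative). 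Anticanonical degree is computed from the sum of columns.
\item At the last step, use the equations of $\Pi_{\mA}^{14}$ to solve for coordinates over each chart of the base. In Type~Dp, the $\mP^1$ charts give a locally trivial weighted projective bundle, and the transition functions are those recorded in Proposition~\ref{prop:transition-fcn}. The fibre dimension count should match $\mP(1^2,2^3,3^2,4^2,5,6,d-3,d-2,d-1)$. In Type~Div, the contraction image is the locus cut by the one equation that survives in the three exposed coordinates, which is a quadric, hence a conic $C$. The fibres over $C$ are the stated weighted projective spaces, and local triviality again comes from explicit elimination.
\item Prove terminality of $\Pi_{\mP}$. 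Its singular locus lies in the toric strata. Along the smooth locus of the affine cone $\Pi_{\mA}^{14}$, the singularities are quotient singularities, whose terminality follows from the Reid–Tai criterion applied to the weights, for example $\tfrac{1}{d+1}(\dots)$ at the $s_1$-point. At the vertex and along the singular locus of $\Pi_{\mA}^{14}$, I use that $\Pi_{\mA}^{14}$ is Gorenstein with terminal singularities (known from \cite{Tak5}), together with quotient arguments.
\end{enumerate}

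\textbf{Main obstacle.} The hardest part will be step~2 together with step~3. The toric steps are formal, but the strict transform of $\Pi_{\mP}$ might contain a whole flipping locus, or the final map might fail to be equidimensional. Verifying this means computing the intersection of $\Pi_{\mP}$ with each coordinate-stratum that is modified. I would do this by exploiting the $\mathrm{GL}$-type symmetry of the cubic-form description to reduce to a few normal forms of the Jordan algebra elements, and then checking the dimension and reducedness of the exceptional loci directly from the equations.
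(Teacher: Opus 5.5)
There is a genuine gap in your steps 1--3: the toric 2-ray game of the weighted blow-up of the ambient weighted projective space of $\Pi_{\mP}$ cannot produce this link. That toric variety is the one given by \eqref{eq:wtmatdp}. There the columns of $w_1,s_1,s_2,s_3$ have second entry $-1$ and span four distinct rays. Every other coordinate has column $(w(*),0)$, so all of them lie on the single boundary ray of the effective cone. The game therefore consists of two wall crossings, at $s_2$ and $s_3$, which restrict to isomorphisms on the strict transform. It then ends with the toric fibration $\mathcal{T}_0\to\overline{\mathcal{T}}_1$, whose fibres are $\mP^3$'s in $w_1,s_1,s_2,s_3$. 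On $\widehat{\Pi}$ this fibration restricts to the crepant small contraction $\widehat{g}_{\Pi}\colon\widehat{\Pi}\to\overline{\Pi}_1$ onto the hypersurface $\{F_{\overline{\Pi}}=0\}$, and the ambient game stops there. It has no flop, no ray carrying coordinates of weights $-2,-1$, no ray with two coordinates giving a $\mP^1$, and no ray with three coordinates spanning a $\mP(1,1,1)$. The ray structure you describe is read off from the statement of the theorem. The coordinates it needs ($s_{125},s_{123}$ for the flip, $(s_{135}:s_{123})$ for the base $\mP^1$, $s_{124},s_{125},s_{135}$ for the plane containing $C$) do not exist in the ambient space of $\Pi_{\mP}$.

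The missing idea is a second embedding and a second toric VGIT.
\begin{itemize}
\item Write $F_{\overline{\Pi}}=\sum\widetilde{t}_{ijk}D_{ijk}$, with $D_{ijk}$ the $3\times3$ minors of $\mathsf{M}$.
\item Adjoin new variables $s_{ijk}=D_{ijk}$, so that $\overline{\Pi}_1=\{I=0,\ s_{ijk}=D_{ijk}\}\subset\overline{\mathcal{T}}'_1$.
\item Add a new variable $w_2$ and use the weights \eqref{eq:wtmatDp1} or \eqref{eq:wtmatdiv}.
\item The naive transform $\{I=0,\ w_2s_{ijk}=D_{ijk}\}$ is reducible, since it contains $\{I=w_2=D_{ijk}=0\}$ over $E_{\overline{\Pi}}$. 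So take the closure $\Pi_{\mathtt{II}}$ of an explicit chart, and show (Lemma~\ref{lem:sijkNull}) that four $s_{ijk}$-charts cover it, each an open subset of an affine space.
\end{itemize}
This is what makes $\Pi_1$, the flip and the final contraction computable, and the paper identifies it as one of its main contributions.

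Your step 4 also fails as stated. The paper derives terminality of $\Pi_{\mP}$ from the link rather than from any known terminality of $\Pi^{14}_{\mA}$. Moreover, ``quotient arguments'' at non-quotient singular points are not enough, since a cyclic quotient of a Gorenstein terminal point need not be terminal. Instead, $\Pi_1$ has only quotient singularities by those four charts, and these are checked with the Reid--Tai criterion. Terminality then transfers to $\widehat{\Pi}$ across the flop by Koll\'ar's theorem, and descends to $\Pi_{\mP}$ through the $K$-negative contraction $\widehat{f}_{\Pi}$.
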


The statement that the Sarkisov link can be constructed in Theorem~\ref{thm:keySarkisov} is itself nontrivial, and we consider that establishing this is one of the main contributions of the present paper. 
For a more detailed explanation of this point, we refer to the end of Section~\ref{subsec:Description-of Pi1}. 
We also note that the statement at the end of the theorem that $\Pi_{\mP}$ has only terminal singularities follows as a consequence of the description of the Sarkisov link.

\vspace{5pt}

By studying in detail the restriction of Theorem~\ref{thm:keySarkisov} to a $\mQ$-Fano $3$-fold $X$, we obtain the following Theorem~\ref{thm:FanoSarkisov}. 
Before stating it, we introduce the following definition.

\begin{defn}[\textbf{ordinary $cA_{r-1}$-singularity and simple $(2,1)$-contraction}]
Let $r\geq 2$ be an integer, and let $g$ be an element of order $r$ in the convergent power series ring $\mC\{z,w\}$ (here, the order of $g$ means the minimum of the orders of the monomials appearing in $g$). 
Assume that the leading term $g_{r}$ of $g$ is a product of $r$ mutually coprime linear forms. 
Then, by Hensel's lemma (cf.~\cite[4.24 (3)]{KoMo2}), the series $g$ itself can be written as a product of $r$ convergent power series whose leading terms coincide with the irreducible factors of $g_{r}$. 
Then it follows that the hypersurface $H=\{xy+g=0\}$ has an isolated singularity at the origin, which is a $cA_{r-1}$-singularity. 
A singularity analytically isomorphic to this one is called an \textit{ordinary $cA_{r-1}$-singularity}. (Note that the ordinary $cA_1$-singularity is just the ordinary double point.)

Moreover, when we assign weights $(0,r,1,1)$ to the variables $x,y,z,w$, we call the weighted blow-up $\widetilde{H}\to H$ of weight $(0,r,1,1)$,
\[
(u,X,Y,Z,W)\mapsto(X,u^{r}Y,uZ,uW),
\]
a \textit{simple $(2,1)$-contraction}. 

A straightforward computation shows the following: the exceptional locus of $\widetilde{H}\to H$ is an irreducible divisor contracted to the $x$-axis, and all fibers are one-dimensional. 
The fiber over the origin contains a unique $\nicefrac{1}{r}(1,1,r-1)$-singularity, and $\widetilde{H}$ is smooth outside this point. 
The fiber over the origin consists of $r$ irreducible components passing through this singularity.
\end{defn}

\begin{thm}[\textbf{Sarkisov links for prime $\mQ$-Fano $3$-folds}]
\label{thm:FanoSarkisov}

Let $X$ be a general weighted complete intersection in $\Pi_{\mP}$ satisfying either \eqref{eq:TypeDp} or \eqref{eq:TypeDiv} and the following condition $(\star):$

\vspace{3pt}

\noindent $(\star)$ 
In \eqref{eq:TypeDp} and \eqref{eq:TypeDiv}, the defining equations of the weighted homogeneous hypersurfaces (which we call \textit{sections}) cutting out $\,X$ from $\,\Pi_{\mP}$ do not contain the variables $\,s_{1}$, $s_{2}$, $s_{3}$.
(Note that the ambient weighted projective space of $\,\Pi_{\mP}$ has coordinates $s_{1}$, $s_{2}$, $s_{3};$ see \cite{Tak6}.)

\vspace{3pt}

Then $\,X$ is a prime $\,\mQ$-Fano $3$-fold. It contains the $s_{1}$-point of $\ \Pi_{\mP}$, which is a $\nicefrac{1}{d+1}(1,1,d)$-singularity. 
Let $\widehat{X}$ be the strict transform of $X$ in $\widehat{\Pi}$. 
Then the restriction $\widehat{f}\colon \widehat{X}\to X$ of $\widehat{f}_{\Pi}$ to $\widehat{X}$ is the weighted blow-up at the $s_{1}$-point with weights $\nicefrac{1}{d+1}(1,1,d)$.

The restriction of the Sarkisov link \eqref{eq:PiSarkisov} to $X$ and its strict transforms is the Sarkisov link starting from $\widehat{f}$. 
Denoting it by
\begin{equation}
\label{eq:3-foldSarkisov}
\xymatrix{
& \widehat{X} \ar@{-->}[rr] \ar[dl]_{\widehat{f}} && \widetilde{X} \ar[dr]^{\widetilde{f}}\\
X &&&& Y,
}
\end{equation}
we describe it as follows in the cases of \textup{Type~Dp} and \textup{Type~Div}, respectively.

\vspace{3pt}

\noindent \textbf{\textup{Type~Dp:}} 
\begin{itemize}
	\item The map $\widehat{X}\dashrightarrow \widetilde{X}$ is the composition of a flop and a flip. 
	The flip is of hypersurface type classified in \cite{B}, with $\mC^{*}$-action $(d,1^{2},-2,-1;d-2)$.
	
	\item The flipping curve has $d-2$ irreducible components. 
	Each passes through the unique $\nicefrac{1}{d}(1,1,d-1)$-singularity of $\,\widehat{X}$ on the $\widehat{f}$-exceptional divisor.
	
	\item The flipped curve is irreducible and passes through two singularities: 
	an ordinary $cA_{d-3}$-singularity and a $\nicefrac{1}{2}(1,1,1)$-singularity.
	
	\item $\,Y\simeq \mP^{1}$, and $\widetilde{f}\colon \widetilde{X}\to Y$ is a fibration whose general fiber is a del Pezzo surface of degree $1$.
\end{itemize}

\vspace{3pt}

\noindent \textbf{\textup{Type~Div:}} 
\begin{itemize}
	\item The map $\widehat{X}\dashrightarrow \widetilde{X}$ is a flop. 
	
	\item The morphism $\widetilde{f}$ is a divisorial contraction whose exceptional divisor is mapped to the curve $C$, 
	where $C$ is as described in Theorem~\ref{thm:keySarkisov}. 
	
	\item Any singularity of $\,\widetilde{Y}$ on the $\widetilde{f}$-exceptional divisor is a $\nicefrac{1}{r}(1,1,r-1)$-singularity for some $r\in\mN$. 
	Near the image of each such singularity, $\widetilde{f}$ is a simple $(2,1)$-contraction. 
	
	\item The variety $Y$ is a weighted complete intersection of two hypersurfaces of weights $4$ and $d+1$ in $\mP(1^{4},2,d)$. 
	When $d=4$, the coordinate of weight $\,4$ can be eliminated using the hypersurface of weight $\,4$. 
	In this case, $Y$ becomes a hypersurface of weight $\,5$ in $\mP(1^{4},2)$. 
	In either case, $Y$ belongs to the families of weighted complete intersection $\mQ$-Fano $\,3$-folds listed in \cite{IF}. 
	
	\item The variety $Y$ has the same cyclic quotient terminal singularities as the corresponding quasi-smooth $\mQ$-Fano $3$-fold in \cite{IF} outside $C$. 
	On $C$, it has five singularities: three ordinary $cA_{2}$-singularities, one ordinary $cA_{d-1}$-singularity, and one ordinary $cA_{d}$-singularity.
\end{itemize}
\end{thm}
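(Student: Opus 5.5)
The plan is to deduce everything from Theorem~\ref{thm:keySarkisov} by restriction, as in \cite{Tak3, Tak4, Tak8}. First, using the results of \cite{Tak6}, a general $X$ satisfying $(\star)$ is a quasi-smooth prime $\mQ$-Fano $3$-fold. Since the sections involve none of $s_{1},s_{2},s_{3}$, every section vanishes at the $s_{1}$-point. Near the $s_{1}$-point, each section therefore eliminates one of the orbifold coordinates in \eqref{eq:s1-pt}: namely the coordinate whose weight equals the degree of the section minus a multiple of $d+1$. After these eliminations, only three coordinates with weights $1,1,d$ modulo $d+1$ remain. Hence the $s_{1}$-point of $X$ is a $\nicefrac{1}{d+1}(1,1,d)$-point, and $\widehat{f}_{\Pi}$ restricts to the Kawamata blow-up $\widehat{f}$. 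By Kawamata's theorem this is the unique divisorial extraction there.

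Next we restrict the link. Let $H_{1},\dots,H_{m}$ be the sections, and let $\widehat{H}_{i}$ be their strict transforms on $\widehat{\Pi}$. Since each $H_{i}$ avoids the $s_{i}$-coordinates, $\widehat{H}_{i}$ is linearly equivalent to a combination of $-K$ and the exceptional divisor $E$, and it lies in the relevant cone of the $2$-ray game. We then check three things in turn.
\begin{enumerate}[(a)]
\item The flopping locus of $\widehat{\Pi}$ meets $\widehat{X}$ in finitely many curves. The restricted map is a flop of $\widehat{X}$, since $-K_{\widehat{X}}$ is the restriction of $-K_{\widehat{\Pi}}$ twisted by the sections and is trivial on these curves.
\item In Type~Dp, the toric flip of type $(1^{2},2^{2},3,4^{2},5,6^{2},d-3,d-1,d,-2,-1)$ restricts to a hypersurface flip. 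Each section eliminates one positive-weight variable locally, leaving variables of weights $(d,1,1,-2,-1)$ and one equation of weight $d-2$. This matches Brown's list \cite{B}. The flipping curve is cut out by an equation of degree $d-2$ in the two weight-$1$ variables, so it has $d-2$ components through the $\nicefrac{1}{d}(1,1,d-1)$-point. On the flipped side, a local computation at the two negative-weight coordinate points gives an ordinary $cA_{d-3}$-point and a $\nicefrac{1}{2}(1,1,1)$-point; ordinarity follows from generality via the Hensel argument in the definition.
\item The final contraction restricts as follows.
\begin{itemize}
\item In Type~Dp, a fiber of the $\mP(1^{2},2^{3},3^{2},4^{2},5,6,d-3,d-2,d-1)$-bundle cut by the $9$ sections is a surface of degree $1$ in $\mP(1,1,2,3)$, i.e.\ a del Pezzo surface of degree $1$.
\item In Type~Div, the sections descend to $\mP(1^{4},2^{4},3^{3},d-1,d^{2})$. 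Eliminating coordinates yields $Y$ as a complete intersection of degrees $4$ and $d+1$ in $\mP(1^{4},2,d)$, or a quintic in $\mP(1^{4},2)$ when $d=4$. The exceptional divisor becomes a surface over $C$ with one-dimensional fibers, so $\widetilde{f}$ is a $(2,1)$-contraction.
\end{itemize}
\end{enumerate}
With these checks in hand, $\widehat{X}\dashrightarrow\widetilde{X}\to Y$ is a $2$-ray game with terminal, $\mQ$-factorial steps and relative Picard number one. Hence it is the Sarkisov link starting from $\widehat{f}$.

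The main obstacle is the local analysis along $C$ in Type~Div: we must show that singularities of $Y$ on $C$ are exactly three $cA_{2}$-points, one $cA_{d-1}$-point and one $cA_{d}$-point, and that $\widetilde{f}$ is locally a simple $(2,1)$-contraction. I would first compute the equations of $Y$ near $C$ in the form $xy+g(z,w)=0$, with $x$ a parameter along $C$. The singular points then occur where the fiber of the exceptional $\mP(1^{2},2^{3},3^{3},4,d-1,d,d+1)$-bundle meets the sections along the non-Gorenstein coordinate strata of weights $3$, $d$ and $d+1$. At each such point, the index $r$ of the $\nicefrac{1}{r}(1,1,r-1)$-point on $\widetilde{X}$ produces the corresponding $cA_{r-1}$-point below, and the weighted blow-up of weight $(0,r,1,1)$ recovers $\widetilde{X}$ locally. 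The count of points per weight should come from counting intersections of $C$ with the coordinate strata, and ordinarity again follows from generality together with Hensel's lemma.
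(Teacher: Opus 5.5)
Your overall strategy is the paper's: restrict the key-variety link of Theorem~\ref{thm:keySarkisov} to $X$, read off the numerics, and verify on charts. However, two steps that carry the real content are asserted without an argument that works.

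\textbf{Primality.} The condition $(\star)$ is Zariski-closed. So a general $X$ satisfying $(\star)$ is \emph{not} a general weighted complete intersection in $\Pi_{\mP}$, and \cite{Tak6} does not apply as you cite it. The paper handles this case by case:
\begin{itemize}
\item For Nos.~501 and 512, the weights force $(\star)$.
\item For Nos.~550, 577 and 878, the automorphisms of $\Pi_{\mP}$ in Proposition~\ref{prop:autom} move the relevant $\frac{1}{d+1}(1,1,d)$-point of a general $X$ to the $s_1$-point. Hence imposing $(\star)$ loses no generality.
\item For Nos.~872 and 1766, primality has to be re-proved directly.
\end{itemize}

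\textbf{The flop.} Your step (a) simply asserts that $\widehat X$ meets the flopping locus in finitely many curves. That is the statement that $\widehat X\to\overline{X}_1$ is small, which is exactly what has to be proved. The sections are total transforms of special weighted homogeneous sections of $\overline{\Pi}_1$: they contain no $s_i$, have low degree, and have base loci in the weighted projective space. So no naive Bertini or dimension count is available. The missing idea is the paper's K3 argument:
\begin{itemize}
\item Take the unique anticanonical K3 surface $S$. The map $\widehat S\to S$ is the $\frac{1}{d+1}(1,d)$ blow-up, so $\widehat S$ is K3 and maps onto $\overline{S}_1$.
\item The affine cone of $\overline{S}_1$ is smooth by the Jacobian criterion, hence misses the singular locus of the affine hypersurface $\{F_{\overline{\Pi}}=0\}$.
\item Suppose a divisor of $\widehat X$ were contracted onto a curve in $\overline{X}_1$. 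That curve would meet the ample $\overline{S}_1$. It lies in the image of the exceptional locus of the small contraction $\widehat g_{\Pi}$, so its singularities are not quotient ones. This gives a contradiction.
\end{itemize}
Only after this do you get that $X_1\to\overline{X}_1$ is small and crepant, that the Picard numbers are $2$, and that the argument of Proposition~\ref{prop:flop} applies.

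\textbf{The local analysis.} This also needs more than you indicate.
\begin{itemize}
\item The Gorenstein point on the flipped curve $\mP(s_{125},s_{123})$ is in general not a coordinate point. In the chart $s_{125}=1$, with coordinates $s_{123},p_3,s_{126},z$, the paper must first translate $s_{123}$ to reach it. A computation at the two coordinate points would therefore miss it.
\item Hensel's lemma only factors $g(z,w)$ once the equation already has the form $xy+g(z,w)$. To get there from $G_{123}p_3+h(s_{126},z)+(\text{terms of higher weight})$ you need a weighted finite-determinacy normal form, which is Lemma~\ref{lem:canonical form}.
\item In Type~Div, your target-side route needs two further ingredients. You need the last part of that lemma, to straighten $C$ to the $x$-axis. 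You also need the uniqueness of the divisorial extraction along $C$, to identify $\widetilde f$ with the $(0,r,1,1)$ blow-up. The paper notes this route is computationally heavy. It instead verifies the source-side criterion of Proposition~\ref{prop:simple cont} on the $s_{124}$-chart.
\item The paper obtains the singularities of $\widetilde X$ from those of $\widehat X$ via \cite{Ko}, not by counting intersections with coordinate strata.
\item Minor: Type~Dp has $11$ sections, not $9$.
\end{itemize}
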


\begin{rem}
In the case of Type~Dp, the morphism $\widetilde{X}\to \mP^{1}$ is not, in general, contained in a $\mP(1^{2},2,3)$-bundle.
\end{rem}

The existence of the Sarkisov link in Theorem~\ref{thm:keySarkisov} is nontrivial, whereas its description is relatively straightforward. 
On the other hand, Theorem~\ref{thm:FanoSarkisov} is obtained by restricting the situation of Theorem~\ref{thm:keySarkisov}, so its construction is more or less straightforward, although its description, while computationally elementary, is somewhat involved.

\vspace{3pt}


\vspace{3pt}

\noindent \textbf{Acknowledgment.} 
This work was supported by JSPS KAKENHI Grant Number 25K06923.

\vspace{5pt}

\noindent \textbf{Notation.} 
In what follows, $*$ denotes any of the coordinates $p_{i}\,(1\leq i\leq 4)$, $u_{1}$, $u_{2}$, $t_{1}$, $t_{2}$, $t_{123}$, $t_{124}$, $t_{125}$, $t_{126}$, $t_{135}$, $t_{136}$, $t_{245}$ of $\Pi^{14}_{\mA}$. 
We denote by $z$ the coordinate of weight $1$ newly added for $\Pi_{\mP}^{14}$ in Type~Dp. 
The symbols $w(z)$, $w(s_{i})\,(i=1,2,3)$, and $w(*)$ denote the weights of the coordinates of $\Pi_{\mP}$.

\section{\textbf{Toric VGIT}}\label{sec:Toric-Sarkisov-link}

The contents of this section follow immediately from the general theory of toric VGIT described in \cite{BZ} and \cite{CoLS}, once we specify the settings given by \eqref{eq:wtmatdp}, \eqref{eq:wtmatDp1}, and \eqref{eq:wtmatdiv} below.

\subsection{First toric VGIT}\label{subsec:The-first-toric}

Let $\Pi_{\mA}$ denote the affine cone of $\Pi_{\mP}$. 
Let $\mA_{\mathtt{I}}$ be the affine space obtained by adding one more coordinate $w_{1}$ to the ambient affine space of $\Pi_{\mA}$. 
In this subsection, we consider the $(\mC^{*})^{2}$-action on $\mA_{\mathtt{I}}$ whose weights of coordinates are given by the following matrix:

\begin{equation}\label{eq:wtmatdp}
\setlength{\arraycolsep}{4pt}
\renewcommand{\arraystretch}{1.05}
\begin{array}{@{} c *{5}{c} c @{}}
  & w_1 & s_1 & s_2 & s_3 & * & \\[2pt]
  \multirow{2}{*}{$\Bigl(\!$}
   & 0  & w(s_1)  & w(s_2)  & w(s_3)  & w(*) 
   & \multirow{2}{*}{$\!\Bigr).$} \\
   & -1 & -1 & -1 & -1 & 0 &
\end{array}
\end{equation}

By \cite[p.~517, Table~3]{Tak6}, we note that in all cases
\begin{equation}
w(s_{2}) = w(s_{1}) + 1, \qquad w(s_{3}) = w(s_{1}) + 2
\label{eq:ws2}
\end{equation}
holds.

The GIT quotients corresponding to this matrix form the following diagram:
\begin{equation}\label{eq:first}
\xymatrix@C=15pt{
& \mathcal{T}_{-2}\ar[dl]\ar[dr]  && \mathcal{T}_{-1}\ar[dl]\ar[dr] && \mathcal{T}_0\ar[dl]\ar[dr] \\
\overline{\mathcal{T}}_{-2} && \overline{\mathcal{T}}_{-1} && \overline{\mathcal{T}}_0 && \overline{\mathcal{T}}_1.
}
\end{equation}
Here, $\overline{\mathcal{T}}_{-2}$, $\overline{\mathcal{T}}_{-1}$, $\overline{\mathcal{T}}_{0}$, and $\overline{\mathcal{T}}_{1}$ are the GIT quotients corresponding to the $2$nd, $3$rd, $4$th, and $5$th columns of the matrix, respectively (we regard $\left(\begin{array}{c} w(*)\\ 0 \end{array}\right)$ as one column). 
The variety $\overline{\mathcal{T}}_{-2}$ is the ambient weighted projective space of $\Pi_{\mP}$, and $\overline{\mathcal{T}}_{1}$ is the weighted projective space obtained from $\overline{\mathcal{T}}_{-2}$ by eliminating the coordinates $s_{1}$, $s_{2}$, and $s_{3}$. 
Moreover, $\mathcal{T}_{-2}$, $\mathcal{T}_{-1}$, and $\mathcal{T}_{0}$ are the GIT quotients corresponding to the chambers generated by the $2$nd and $3$rd columns, the $3$rd and $4$th columns, and the $4$th and $5$th columns, respectively, and they are $\mQ$-factorial toric varieties of Picard number $2$.

Taking \eqref{eq:ws2} into account, the morphism $\mathcal{T}_{-2}\to\overline{\mathcal{T}}_{-2}$ is given by
\begin{equation}
(w_{1}, s_{1}, s_{2}, s_{3}, *) \mapsto 
\bigl(s_{1},\, w_{1}^{\nicefrac{1}{w(s_{1})}} s_{2},\, w_{1}^{\nicefrac{2}{w(s_{1})}} s_{3},\, w_{1}^{\nicefrac{w(*)}{w(s_{1})}} *\bigr),
\label{eq:toricwtblup1st}
\end{equation}
that is, $\mathcal{T}_{-2}\to\overline{\mathcal{T}}_{-2}$ is the weighted blow-up of weight $\nicefrac{1}{w(s_{1})}(1,2,w(*))$.

\subsection{Second toric VGIT}\label{subsec:The-second-toric}

Let $\mA_{\mathtt{II}}$ be the affine space obtained by adding the coordinates $w_{2}$, $s_{123}$, $s_{124}$, $s_{125}$, $s_{126}$, $s_{135}$, $s_{136}$, $s_{245}$, and $s_{246}$ to the ambient affine space of the affine cone of $\overline{\mathcal{T}}_{1}$. 
In this subsection, we consider a $(\mC^{*})^{2}$-action on $\mA_{\mathtt{II}}$. 
The meaning of the affine space $\mA_{\mathtt{II}}$ and its $(\mC^{*})^{2}$-action will be clarified in Section~\ref{sec:Sarkisov-link-for key var}. 
We treat the cases of Type~Dp and Type~Div separately.

\subsubsection{\textbf{\textup{Type~Dp}}}

In this case, the $(\mC^{*})^{2}$-action is given by the following matrix:
\begin{equation}\label{eq:wtmatDp1}
\setlength{\arraycolsep}{4pt}
\renewcommand{\arraystretch}{1.05}
\begin{array}{@{} c *{8}{c} c @{}}
  &w_2& * & s_{246} & s_{245} & s_{126} & \substack{s_{136}\\s_{124}}
   & s_{125} & \substack{s_{135}\\s_{123}} & \\[2pt]
  \multirow{2}{*}{$\Bigl(\!$}
   & 0  & w(*)  & 3d+3  & 3d+1  & 3d  & 3d-1  & 3d-2  & 3d-3 & \multirow{2}{*}{$\!\Bigr).$} \\
   & -1 & 0 & 1  & 1  & 1  & 1  & 1  & 1 &
\end{array}
\end{equation}
The GIT quotients corresponding to this matrix form the following diagram:
\begin{equation}\label{eq:ToricSarkisovDel}
\xymatrix@C=8pt{
& \mathcal{T}_1\ar[dl]\ar[dr]  && \mathcal{T}_2\ar[dl]\ar[dr]
&& \mathcal{T}_3\ar[dl]\ar[dr] && \mathcal{T}_4\ar[dl]\ar[dr]
&& \mathcal{T}_5\ar[dl]\ar[dr] && \mathcal{T}_6\ar[dl]\ar[dr] &\\
\overline{\mathcal{T}}'_1 && \overline{\mathcal{T}}_2 && \overline{\mathcal{T}}_3
&& \overline{\mathcal{T}}_4 && \overline{\mathcal{T}}_5
&& \overline{\mathcal{T}}_6 && \mP^1.
}
\end{equation}
Here, $\overline{\mathcal{T}}'_{1}$ corresponds to the $2$nd column of the matrix, and $\mathcal{T}_{1}$ corresponds to the chamber generated by the $2$nd and $3$rd columns. 
As in Subsection~\ref{subsec:The-first-toric}, the remaining columns of the matrix correspond, from left to right, to the GIT quotients appearing in the diagram. The variety $\overline{\mathcal{T}}'_{1}$ is the weighted projective space obtained from $\overline{\mathcal{T}}_{1}$ by adjoining the coordinates $s_{123}$, $s_{124}$, $s_{125}$, $s_{126}$, $s_{135}$, $s_{136}$, $s_{245}$, and $s_{246}$ with weights given by the first row of the matrix \eqref{eq:wtmatDp1}. The rightmost $\mP^{1}$ in \eqref{eq:ToricSarkisovDel} corresponds to the last column of the matrix \eqref{eq:wtmatDp1}, and $(s_{123}:s_{135})$ can be regarded as its homogeneous coordinates.

\subsubsection{\textbf{\textup{Type~Div}}}

In this case, the $(\mC^{*})^{2}$-action is given by the following matrix:
\begin{equation}\label{eq:wtmatdiv}
\setlength{\arraycolsep}{4pt}
\renewcommand{\arraystretch}{1.05}
\begin{array}{@{} c *{8}{c} c @{}}
  & w_2 & * & s_{246} & s_{245}
  & \substack{s_{126}\\s_{136}}
  & \substack{s_{124}\\s_{125}\\s_{135}}
  & s_{123} & \\[2pt]
  \multirow{2}{*}{$\Bigl(\!$}
   & 0  & w(*)  & 3d+6  & 3d+5  & 3d+4  & 3d+3  & 3d+2
   & \multirow{2}{*}{$\!\Bigr).$} \\
   & -1 & 0 & 1  & 1  & 1  & 1  & 1 &
\end{array}
\end{equation}
The GIT quotients corresponding to this matrix form the following diagram:
\begin{equation}\label{eq:ToricSarkisovDiv}
\xymatrix@C=15pt{
& \mathcal{T}_1\ar[dl]\ar[dr]  && \mathcal{T}_2\ar[dl]\ar[dr]
&& \mathcal{T}_3\ar[dl]\ar[dr] && \mathcal{T}_4\ar[dl]\ar[dr]\\
\overline{\mathcal{T}}'_1 && \overline{\mathcal{T}}_2 && \overline{\mathcal{T}}_3
&& \overline{\mathcal{T}}_4 && \overline{\mathcal{T}}_5.
}
\end{equation}
Here, $\overline{\mathcal{T}}'_{1}$ corresponds to the $2$nd column of the matrix, and $\mathcal{T}_{1}$ corresponds to the chamber generated by the $2$nd and $3$rd columns. 
As in Subsection~\ref{subsec:The-first-toric}, the remaining columns of the matrix correspond, from left to right, to the GIT quotients appearing in the diagram. The space $\overline{\mathcal{T}}'_{1}$ is the weighted projective space obtained from $\overline{\mathcal{T}}_{1}$ by adjoining the coordinates $s_{123}$, $s_{124}$, $s_{125}$, $s_{126}$, $s_{135}$, $s_{136}$, $s_{245}$, and $s_{246}$ with weights given by the first row of the matrix \eqref{eq:wtmatdiv}.

\section{\textbf{Sarkisov link for the key variety $\Pi_{\mP}$--Proof of Theorem~\ref{thm:keySarkisov}--}}\label{sec:Sarkisov-link-for key var}

In this section, we construct the Sarkisov link for $\Pi_{\mP}$ induced by the toric VGIT studied in Subsections~\ref{subsec:The-first-toric} and \ref{subsec:The-second-toric}, and prove Theorem~\ref{thm:keySarkisov}.

\subsection{Restriction of the diagram \eqref{eq:first}}\label{subsec:The-restriction-of 1st}

Let $\Pi_{-2}$ be the strict transform of $\Pi_{\mP}$ under the morphism $\mathcal{T}_{-2}\to\overline{\mathcal{T}}_{-2}$. 
Its defining equations $F_{1}$--$F_{9}$ are obtained from the nine equations $G_{1}$--$G_{9}$ given in \cite[A.2]{Tak6} (also listed in \cite[Sec.~3]{Tak9}) by setting $t_{246}=1$, substituting \eqref{eq:toricwtblup1st}, and dividing out powers of $w_{1}$ as much as possible. 
Since these equations are quite lengthy, we write them explicitly in Subsection~\ref{subsec:The-equation-ofPi0}.

Let $\Pi_{-1}$ and $\Pi_{0}$ be the subvarieties of $\mathcal{T}_{-1}$ and $\mathcal{T}_{0}$, respectively, defined by the same equations as $\Pi_{-2}$. 
By restricting the diagram \eqref{eq:first} to $\Pi_{\mP}$, $\Pi_{-2}$, $\Pi_{-1}$, and $\Pi_{0}$, we obtain the following diagram:
\begin{equation*}
\xymatrix@C=15pt{
& \Pi_{-2}\ar[dl]\ar[dr]  && \Pi_{-1}\ar[dl]\ar[dr] && \Pi_0\ar[dl]\ar[dr] \\
\Pi_{\mP} && \overline{\Pi}_{-1} && \overline{\Pi}_0 && \overline{\Pi}_1,
}
\end{equation*}
where $\overline{\Pi}_{-1}$, $\overline{\Pi}_{0}$, and $\overline{\Pi}_{1}$ are defined as the images of the corresponding morphisms appearing in \eqref{eq:first}.

By analyzing the unstable loci of the GIT quotients and the exceptional loci via $F_1$--$F_9$, we see that all the morphisms are isomorphisms except for $\Pi_{-2}\to\Pi_{\mP}$ and $\Pi_{0}\to\overline{\Pi}_{1}$. 
Thus, we may simply rewrite the restriction of \eqref{eq:first} as
\begin{equation}\label{eq:firstres}
\xymatrix{
& \widehat{\Pi}\ar[dl]_{\widehat{f}_{\Pi}}\ar[dr]^{\widehat{g}_{\Pi}} \\
\Pi_\mP && \overline{\Pi}_1.
}
\end{equation}

From the polynomials $F_{1}$, $F_{2}$, $F_{7}$, $F_{8}$ in Subsection~\ref{subsec:The-equation-ofPi0} and the corresponding defining equations $G_{1}$, $G_{2}$, $G_{7}$, $G_{8}$ of $\Pi_{\mP}$, we can eliminate the coordinates $u_{1}$, $u_{2}$, $s_{3}$, and $t_{2}$ from the orbifold coordinates of $\widehat{\Pi}$ near the exceptional divisor $\{w_{1}=0\}$ of $\widehat{\Pi}\to\Pi_{\mP}$ and from the orbifold coordinates of $\Pi_{\mP}$ near the $s_{1}$-point. 
Therefore, by \eqref{eq:toricwtblup1st}, the morphism $\widehat{\Pi}\to\Pi_{\mP}$ can be described near the exceptional divisor as
\begin{equation}
(w_{1}, s_{1}, s_{2}, *') \mapsto \bigl(s_{1},\, w_{1}^{\nicefrac{1}{w(s_{1})}} s_{2},\, w_{1}^{\nicefrac{w(*')}{w(s_{1})}} *'\bigr),
\label{eq:Piwtblup}
\end{equation}
where $*'$ denotes the collection of coordinates obtained from $*$ by removing $u_{1}$, $u_{2}$, $s_{3}$, and $t_{2}$. 
In other words, $\widehat{\Pi}\to\Pi_{\mP}$ is the weighted blow-up of weight $\nicefrac{1}{w(s_{1})}(1,w(*'))$. 
Since $\Pi_{\mP}$ is $\mQ$-factorial and has Picard number $1$ (cf.~\cite[Prop.~5.10]{Tak5}), it follows that $\widehat{\Pi}$ is $\mQ$-factorial and has Picard number $2$. 

Moreover, the morphism $\widehat{\Pi}\to\overline{\Pi}_{1}$ is obtained by eliminating the coordinates $w_{1}$, $s_{1}$, $s_{2}$, $s_{3}$.

\subsection{Description of $\overline{\Pi}_{1}$}\label{subsec:Description-of Pi1}

Consider the following matrix:
\begin{equation}\label{eq:matbasic}
\setlength{\arraycolsep}{3pt}
\mathsf{M}:=\begin{pmatrix}
u_{1} & u_{2} & -t_{2}p_{2} & -t_{2}p_{4} & -t_{2}p_{1}+t_{1}u_{1} & -t_{2}p_{3}+t_{1}u_{2}\\
-p_{1} & -p_{3} & u_{1}+t_{1}p_{2} & u_{2}+t_{1}p_{4} & -t_{2}p_{2} & -t_{2}p_{4}\\
-p_{2} & -p_{4} & -p_{1} & -p_{3} & u_{1} & u_{2}
\end{pmatrix}.
\end{equation}
Let $D_{ijk}$ denote the determinant of the $3\times3$ submatrix obtained by taking the $i$, $j$, and $k$ columns ($i<j<k$) of this matrix in this order. 
Then it is easy to check that the ideal $(D_{ijk}\mid1\le i<j<k\le6)$ is generated by
\begin{equation}
D_{123},D_{124},D_{125},D_{126},D_{135},D_{136},D_{245},D_{246}.
\label{eq:Dijk}
\end{equation}
We define
\begin{align*}
E_{\overline{\Pi}} & :=\{D_{123}=D_{124}=D_{125}=D_{126}=D_{135}=D_{136}=D_{245}=D_{246}=0\}\subset\overline{\mathcal{T}}_{1},\\
E_{\widehat{\Pi}} & :=\widehat{\Pi}\cap\{w_{1}=0\}.
\end{align*}

\begin{lem}
\label{lem:The-image-of=003000excep}
The image of $\,E_{\widehat{\Pi}}$ under $\widehat{g}_{\Pi}\colon\widehat{\Pi}\to\overline{\Pi}_{1}$ coincides with $E_{\overline{\Pi}}$. 
In particular, $E_{\overline{\Pi}}$ is irreducible. 
Moreover, $E_{\widehat{\Pi}}\to E_{\overline{\Pi}}$ is a finite morphism.
\end{lem}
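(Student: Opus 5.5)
The plan is to work on the chart $w_{1}=0$ of $\Pi_{-2}=\widehat{\Pi}$. I would read off the restrictions $F_{i}|_{w_{1}=0}$ from the explicit equations in Subsection~\ref{subsec:The-equation-ofPi0}, and recognize them as a linear system in $(s_{1},s_{2},s_{3})$ whose coefficient matrix is (up to sign and transposition) the matrix $\mathsf{M}$ of \eqref{eq:matbasic}.

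\textbf{Irreducibility of $E_{\widehat{\Pi}}$.} By \eqref{eq:Piwtblup}, after eliminating $u_{1},u_{2},s_{3},t_{2}$, the variety $\Pi_{\mP}$ near the $s_{1}$-point is the cyclic quotient singularity \eqref{eq:s1-pt}. The morphism $\widehat{f}_{\Pi}$ is the weighted blow-up of this quotient singularity with weight $\nicefrac{1}{w(s_{1})}(1,w(*'))$. Hence $E_{\widehat{\Pi}}$ is a weighted projective space $\mP(1,w(*'))$, in particular irreducible. Consequently its image under $\widehat{g}_{\Pi}$ is irreducible, and the assertion ``$E_{\overline{\Pi}}$ is irreducible'' will follow once the image is identified.

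\textbf{Identifying the image.} Setting $w_{1}=0$ in $F_{1}$--$F_{9}$ should leave two kinds of equations. First, six equations that are linear in $(s_{1},s_{2},s_{3})$; I expect them to read $(s_{1},s_{2},s_{3})\,\mathsf{M}=0$ in suitable normalization, the entries of $\mathsf{M}$ being the $w_{1}$-free parts of the coefficients. Second, possibly some equations not involving the $s_{j}$. Since $\widehat{g}_{\Pi}$ forgets $w_{1},s_{1},s_{2},s_{3}$, the argument runs in two directions:
\begin{itemize}
\item If $(*)$ lies in the image, then on the unstable-locus complement of $\mathcal{T}_{-2}$ the vector $(s_{1},s_{2},s_{3})$ is nonzero. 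So $\mathsf{M}(*)$ has a nontrivial left kernel, hence $\rank\mathsf{M}\le 2$, and all $D_{ijk}$ vanish. By the stated generation result, this means exactly \eqref{eq:Dijk}.
\item Conversely, given $(*)\in E_{\overline{\Pi}}$, choose a nonzero vector $(s_{1},s_{2},s_{3})$ in the left kernel. Then check, using the explicit equations, that any remaining equations are consequences of the $D_{ijk}$ (or are automatically satisfied). This gives a point of $E_{\widehat{\Pi}}$ over $(*)$.
\end{itemize}
This shows that $\widehat{g}_{\Pi}(E_{\widehat{\Pi}})=E_{\overline{\Pi}}$.

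\textbf{Finiteness.} The fiber of $E_{\widehat{\Pi}}\to E_{\overline{\Pi}}$ over $(*)$ is the set of $(s_{1}:s_{2}:s_{3})$ in the left kernel of $\mathsf{M}(*)$, taken modulo the torus action. This is a finite set (a single orbit up to the finite group) precisely when $\rank\mathsf{M}(*)=2$. So it suffices to show that $\rank\mathsf{M}\le1$ never occurs at a point of $\overline{\mathcal{T}}_{1}$ lying in the image.

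I expect this to be the main obstacle. My first attempt would be to exploit the structure of $\mathsf{M}$. The third row is $(-p_{2},-p_{4},-p_{1},-p_{3},u_{1},u_{2})$, and the first row contains $u_{1},u_{2}$ in its first two entries. Vanishing of all $2\times2$ minors should then force $p_{1}=\dots=p_{4}=u_{1}=u_{2}=0$. After that I would use the remaining equations of $\Pi_{\mP}$, restricted to $w_{1}=0$, to force the other coordinates $t_{1},t_{2},t_{ijk}$ to vanish. This would produce the origin, which is excluded from $\overline{\mathcal{T}}_{1}$. Then $E_{\widehat{\Pi}}\to E_{\overline{\Pi}}$ is quasi-finite, and it is proper as a restriction of the projective morphism $\widehat{g}_{\Pi}$, hence finite.
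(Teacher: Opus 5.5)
Your inclusion $\widehat{g}_{\Pi}(E_{\widehat{\Pi}})\subseteq E_{\overline{\Pi}}$ is correct. At $w_1=0$ the equations $F_1$--$F_6$ say $(s_1,s_2,s_3)\mathsf{M}'=0$, where $\mathsf{M}'$ is $\mathsf{M}$ with $t_1$ times columns $1,2$ subtracted from columns $5,6$. Also $s_1\neq0$ on $E_{\widehat{\Pi}}$.

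\textbf{The gap in the reverse inclusion.} You treat $F_7$--$F_9$ as if they did not involve the $s_j$. At $w_1=0$ they are
\[
s_1s_3=s_2^2,\qquad s_2s_3+s_1s_2t_1-s_1^2t_2=0,\qquad s_3^2+s_2^2t_1-s_1s_2t_2=0 .
\]
With $s_1=1$ they give $s_3=s_2^2$ and $s_2^3+t_1s_2=t_2$; together with $F_1,F_2$ this is exactly \eqref{eq:paramexcep}. So a nonzero left-kernel vector of $\mathsf{M}(*)$ is in general not a point of $E_{\widehat{\Pi}}$. The extra conditions involve the $s_j$, so they cannot be ``consequences of the $D_{ijk}$''. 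For instance, if all $p_i=u_i=0$ then $\mathsf{M}=0$, and the kernel vector $(1,0,0)$ violates $F_8$ unless $t_2=0$. One must instead take $(1:s:s^2)$ with $s^3+t_1s=t_2$.

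What actually has to be proved is that every point of $V(D_{ijk})$ admits an $s$ with $u_1=p_1s+p_2s^2$, $u_2=p_3s+p_4s^2$, $t_2=s^3+st_1$. In other words, $E_{\overline{\Pi}}$ must be the image of the parametrization \eqref{eq:paramexcep}. The paper does exactly this by elimination: the closure of the image of \eqref{eq:paramexcep} is cut out by the $D_{ijk}$, and the image is closed because $E_{\widehat{\Pi}}$ is projective. A pointwise choice of kernel vector does not replace that computation.

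\textbf{The finiteness step aims at a false statement.} The fibre over $(*)$ is not the projectivized left kernel of $\mathsf{M}(*)$, and $\rank\mathsf{M}\le1$ does occur on the image. Setting $p_i=0$ in \eqref{eq:paramexcep} gives image points with $p_i=u_i=0$ and $t_1,t_2,t_{ijk}$ arbitrary (not all zero), for example the $t_1$-point of $\overline{\mathcal{T}}_1$, where $\mathsf{M}=0$. No equation at $w_1=0$ involves the $t_{ijk}$ at all, so nothing can force them to vanish as you hoped.

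Finiteness instead follows directly from the cubic. Over a fixed $(*)$ with $s_1=1$, one has $s_3=s_2^2$ and $s_2$ is a root of the monic cubic $s_2^3+t_1s_2-t_2$. So each fibre has at most three points modulo the finite stabilizer of $(*)$, and quasi-finite plus proper gives finite. The paper argues more abstractly: $E_{\widehat{\Pi}}$ is a weighted projective space, so a non-finite morphism from it would contract it to a point, whereas $E_{\overline{\Pi}}$ is not a point.
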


\begin{proof}
Note that $s_1\not=0$ near $E_{\widehat{\Pi}}$. Thus we may assume that $s_1=1$  near $E_{\widehat{\Pi}}$, and then 
the equations of $E_{\widehat{\Pi}}$ can be reduced to 
\begin{equation}
s_{3}=s_{2}^{2},\quad t_{2}=s_{2}^{3}+s_{2}t_{1},\quad u_{1}=p_{1}s_{2}+p_{2}s_{2}^{2},\quad u_{2}=p_{3}s_{2}+p_{4}s_{2}^{2}.
\label{eq:paramexcep}
\end{equation}
By omitting the relation $s_{3}=s_{2}^{2}$, this turns out to give a parametrization of $E_{\overline{\Pi}}$ by elimination theory, and we checks that this parametrization coincides with the restriction of $\widehat{g}_{\Pi}$ to $E_{\widehat{\Pi}}$. 
This proves the first assertion. 

For the second assertion, suppose that $E_{\widehat{\Pi}}\to E_{\overline{\Pi}}$ is not finite. 
Since $E_{\widehat{\Pi}}$ is a weighted projective space, this would imply that $E_{\overline{\Pi}}$ consists of a single point, which is clearly not the case.
\end{proof}

We introduce the following polynomial:
\begin{equation}\label{eq:Hyp}
\begin{aligned}
F_{\overline{\Pi}}
&:= \widetilde{t}_{123}D_{123}+\widetilde{t}_{124}D_{124}
   +t_{125}D_{125}+t_{126}D_{126}\\
&\quad +t_{135}D_{135}+\widetilde{t}_{136}D_{136}
   +\widetilde{t}_{245}D_{245}+D_{246},
\end{aligned}
\end{equation}
where we set
\[
\widetilde{t}_{123}:=t_{123}+2t_{1}t_{136},\quad
\widetilde{t}_{124}:=t_{124}-2t_{1}t_{245},\quad
\widetilde{t}_{136}:=3t_{136},\quad
\widetilde{t}_{245}:=3t_{245}.
\]

\begin{lem}
\label{lem:HypNormal}
The following hold:
\begin{enumerate}[$(1)$]
\item[(1)] The affine hypersurface $\{F_{\overline{\Pi}}=0\}$ is normal.
\item[(2)] The weighted projective hypersurface $\{F_{\overline{\Pi}}=0\}\subset\overline{\mathcal{T}}_{1}$ is normal. In particular, it is irreducible.
\end{enumerate}
\end{lem}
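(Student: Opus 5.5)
We will use Serre's criterion: a hypersurface is Cohen--Macaulay, so it is normal as soon as it is regular in codimension one ($R_1$). Hence both parts reduce to bounding the codimension of the singular locus of $\{F_{\overline{\Pi}}=0\}$ inside the hypersurface.

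For $(1)$, note that $F_{\overline{\Pi}}$ is linear in $t_{125},t_{126},t_{135},\widetilde{t}_{123},\widetilde{t}_{124},\widetilde{t}_{136},\widetilde{t}_{245}$. After the triangular change of variables $t_{123}\mapsto\widetilde{t}_{123}$, $t_{124}\mapsto\widetilde{t}_{124}$, $t_{136}\mapsto\widetilde{t}_{136}$, $t_{245}\mapsto\widetilde{t}_{245}$, which is an automorphism of the affine space, we may treat these as independent coordinates. The partial derivatives of $F_{\overline{\Pi}}$ in these seven variables are $D_{123},D_{124},D_{125},D_{126},D_{135},D_{136},D_{245}$. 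Therefore the singular locus is contained in
\[
\{D_{123}=D_{124}=D_{125}=D_{126}=D_{135}=D_{136}=D_{245}=0\}\cap\{F_{\overline{\Pi}}=0\},
\]
which forces $D_{246}=0$ as well. So $\mathrm{Sing}\subset \mathbb{A}^{7}_{t}\times \widetilde{E}$, where $\widetilde{E}\subset\mathbb{A}^{8}$ (coordinates $p_1,\dots,p_4,u_1,u_2,t_1,t_2$) is the affine cone over $E_{\overline{\Pi}}$.

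The key step is a dimension count for $\widetilde{E}$. By the proof of Lemma~\ref{lem:The-image-of=003000excep}, $\widetilde{E}$ is the closure of the image of the parametrization \eqref{eq:paramexcep} by $(s_2,p_1,\dots,p_4,t_1)$, so $\dim\widetilde{E}\le 6$. Indeed, Lemma~\ref{lem:The-image-of=003000excep} gives $\dim E_{\overline{\Pi}}=\dim E_{\widehat{\Pi}}$, and on the affine level the dimension is $6$. The hypersurface has dimension $7+8-1=14$, while $\mathbb{A}^7\times\widetilde{E}$ has dimension at most $13$. That is only codimension one, which is not enough, so we must refine the estimate.

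The refinement uses the remaining derivatives. On $\mathbb{A}^7\times\widetilde{E}$ we also need $\partial F_{\overline{\Pi}}/\partial p_i=\partial F_{\overline{\Pi}}/\partial u_j=\partial F_{\overline{\Pi}}/\partial t_k=0$, and since all $D$'s vanish there, these reduce to
\[
\sum_{ijk}c_{ijk}\,\partial D_{ijk}=0,
\]
where the $c_{ijk}$ are the coefficients $\widetilde{t}_{123},\dots,\widetilde{t}_{245},1$. These are linear conditions on the vector $(c_{ijk})$, with $c_{246}=1$. At a general point of $\widetilde{E}$, the $8$ Jacobian rows of the $D_{ijk}$ span the conormal space of $\widetilde{E}$, which has dimension $8-6=2$. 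So the conditions cut the $7$-dimensional affine space of $t$'s in a locus of codimension $\ge 2$, or make it empty if the $D_{246}$-row is independent. Hence over a dense open subset of $\widetilde{E}$ the singular fibre has dimension $\le 5$. Over the proper closed subsets where the Jacobian rank drops, we stratify $\widetilde{E}$ by rank and check in each stratum that $\dim(\text{stratum})+\dim(\text{fibre})\le 12$. The strata can be computed explicitly from the parametrization, using the locus where $s_2$ and the $p$'s degenerate. This stratified count is the main obstacle; the first thing we will try is to compute the rank of the Jacobian of the $D_{ijk}$ along \eqref{eq:paramexcep} directly. Once it is established, the codimension of $\mathrm{Sing}$ is at least $2$, and Serre's criterion gives $(1)$.

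For $(2)$, $\{F_{\overline{\Pi}}=0\}\subset\overline{\mathcal{T}}_1$ is the quotient of the affine hypersurface minus the origin by $\mathbb{C}^*$. Normality is local and descends under quotients by reductive groups, since invariants of a normal ring are normal. Thus $(1)$ implies that the weighted projective hypersurface is normal. It is connected, being a hypersurface in a weighted projective space of positive dimension (ample divisor). A connected normal variety is irreducible.
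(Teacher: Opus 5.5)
Your reduction matches the paper's. You use Serre's criterion for the hypersurface, the $t_{ijk}$-derivatives to place $\mathrm{Sing}$ inside the cone $\mathbb{A}^7\times\widetilde{E}$ over $E_{\overline{\Pi}}$, and the observation that this cone is irreducible of dimension $13$, i.e.\ of codimension one in the $14$-dimensional hypersurface.

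The gap is in the decisive last step. You assert without argument that at a general point of $\widetilde{E}$ the gradients of the $D_{ijk}$ span the two-dimensional conormal space of $\widetilde{E}$. By the Jacobian criterion, this says the scheme $\{D_{ijk}=0\}$ is generically reduced along $\widetilde{E}$, which is not automatic and which you never verify. You then defer a rank stratification that you call the main obstacle. As written, nothing in the proposal rules out that $\{F_{\overline{\Pi}}=0\}$ is singular along the whole cone, and that is exactly what must be excluded.

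The missing observation is that one point suffices. Since $\mathrm{Sing}$ is closed in the irreducible $13$-dimensional variety $\mathbb{A}^7\times\widetilde{E}$, it has dimension at most $12$ as soon as the hypersurface is smooth at a single point of the cone. In your fibrewise language, the fibre of $\mathrm{Sing}$ over $e\in\widetilde{E}$ is all of $\mathbb{A}^7$ only if every $\nabla D_{ijk}(e)$ vanishes. Over any proper closed subset of $\widetilde{E}$ the bound $5+7=12$ holds trivially. So neither the rank-$2$ claim nor any stratification is needed.

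The paper takes the point $p_1=1$, $p_2=p_3=0$, $p_4=1$, $u_1=u_2=1$, $t_1=0$, $t_2=1$, all $t_{ijk}=0$. It lies on $\widetilde{E}$, being \eqref{eq:paramexcep} with $s_2=1$. Since all $t_{ijk}$ vanish there, the gradient of $F_{\overline{\Pi}}$ at this point reduces to $\nabla D_{246}$. One computes $\partial D_{246}/\partial u_2=3u_2^2+4t_1p_4u_2-3t_2p_3p_4+t_1p_3^2+t_1^2p_4^2=3\neq0$ there. With this inserted, your proof of (1) is complete.

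For (2), your argument via $\mathbb{C}^*$-invariants of the (normal) localized coordinate rings of the affine hypersurface is correct. It is a clean alternative to the paper's remark that the $\mathbb{C}^*$-action is free in codimension one. Your connectedness argument for irreducibility is the same as the paper's.
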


\begin{proof}
(1) It suffices to show that $\overline{\Pi}_{\mA}$ is regular in codimension $1$. 
By considering the partial derivatives of $F_{\overline{\Pi}}$ with respect to the variables $t_{ijk}$, we see that the singular locus of $\overline{\Pi}_{\mA}$ is contained in the affine cone of $E_{\overline{\Pi}}$. 
By Lemma~\ref{lem:The-image-of=003000excep}, the affine cone of $E_{\overline{\Pi}}$ is irreducible and has codimension $1$ in $\{F_{\overline{\Pi}}=0\}$, so it suffices to show that $\overline{\Pi}_{\mA}$ is smooth at some point of the affine cone of $E_{\overline{\Pi}}$. 
For instance, we may verify that $\overline{\Pi}_{\mA}$ is smooth at the point with
\[
p_{1}=1,\quad p_{2}=p_{3}=0,\quad p_{4}=1,\quad u_{1}=u_{2}=1,\quad t_{1}=0,\quad t_{2}=1,\quad t_{ijk}=0\ \text{for all } i,j,k.
\]

(2) By the weights of the coordinates of $\overline{\mathcal{T}}_{1}$, the $\mC^{*}$-action on the affine hypersurface $\{F_{\overline{\Pi}}=0\}$ is free in codimension $1$. 
Hence the first assertion implies the first part of (2).
Since $\{F_{\overline{\Pi}}=0\}$ is an ample divisor in $\overline{\mathcal{T}}_{1}$, it is connected, and therefore irreducible.
\end{proof}

\begin{prop}
\label{prop:widehat=00007BPi=00007D bir}
It holds that $\,\overline{\Pi}_{1}=\{F_{\overline{\Pi}}=0\}$. 
The morphism $\widehat{g}_{\Pi}\colon\widehat{\Pi}\to\overline{\Pi}_{1}$ is a crepant small contraction, and every nontrivial fiber is $1$-dimensional.
\end{prop}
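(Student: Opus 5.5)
The plan is to prove first that $F_{\overline{\Pi}}$ vanishes on $\overline{\Pi}_{1}$, and then to use dimension and irreducibility to force equality. On the open set $\{w_{1}\neq 0\}$ the map $\widehat{g}_{\Pi}$ is the composite of $\widehat{f}_{\Pi}^{-1}$ with the projection that forgets $s_{1},s_{2},s_{3}$. So $\overline{\Pi}_{1}$ is the closure of the image of $\Pi_{\mP}$ under the projection from the $s_{1}$-point, and we must check that $F_{\overline{\Pi}}(p,u,t)=0$ for every point of $\Pi_{\mP}$. To do this we start from the equations $G_{1}$--$G_{9}$ of \cite[A.2]{Tak6} with $t_{246}=1$. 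Using $G_{1},G_{2},G_{7},G_{8}$, which are linear in the pair $(s_{1},\dots)$ after the substitutions used in Subsection~\ref{subsec:The-restriction-of 1st}, we write the three quantities $s_{1},s_{2},s_{3}$ (appropriately normalized) as the solution of a linear system. This system has the shape $\mathsf{M}\cdot{}^{t}(\ldots)=0$. Eliminating $s_{1},s_{2},s_{3}$ then amounts to saying that a certain vector built from the $t_{ijk}$ lies in the kernel of the cofactor map, that is, to a linear combination $\sum c_{ijk}D_{ijk}=0$ whose coefficients are those of \eqref{eq:Hyp}. The precise coefficients $\widetilde t_{123},\widetilde t_{124},3t_{136},3t_{245}$ should come out of a direct elimination computation, which I would do by computer algebra if needed. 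This gives $\overline{\Pi}_{1}\subset\{F_{\overline{\Pi}}=0\}$.

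Next comes the dimension count. $\Pi_{\mP}$ is irreducible, and by Lemma~\ref{lem:The-image-of=003000excep} the map $\widehat{g}_{\Pi}$ is finite on $E_{\widehat{\Pi}}$. Off $E_{\widehat{\Pi}}$, the fibre of $\widehat{g}_{\Pi}$ over a point of $\overline{\Pi}_{1}\setminus E_{\overline{\Pi}}$ is a single point: there $\mathsf{M}$ has rank $3$, so the linear system above determines $(s_{1}:s_{2}:s_{3})$ uniquely. Hence $\widehat{g}_{\Pi}$ is generically finite (indeed birational), and $\dim\overline{\Pi}_{1}=\dim\Pi_{\mP}=\dim\overline{\mathcal{T}}_{1}-1$. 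By Lemma~\ref{lem:HypNormal}(2), $\{F_{\overline{\Pi}}=0\}$ is irreducible of the same dimension, so the two coincide, and it is normal. Birationality together with normality of the target gives $\widehat{g}_{\Pi*}\sO=\sO$ by Zariski's main theorem, so $\widehat{g}_{\Pi}$ is a contraction.

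Now the exceptional locus. Nontrivial fibres lie over points where $\mathsf{M}$ has rank $\le 2$, and this set lies inside $E_{\overline{\Pi}}$. Over such a point the fibre is the set of $(w_{1},s_{1},s_{2},s_{3})$ satisfying the linear equations, with the fibre's intersection with $E_{\widehat{\Pi}}$ finite. The fibre therefore embeds into the projectivized kernel of $\mathsf{M}$, of dimension at most $1$ generically. I would stratify by the rank of $\mathsf{M}$, namely $2$ or $\le1$. In the rank-$\le1$ stratum I would check, from the parametrization \eqref{eq:paramexcep} and the equations $F_{1}$--$F_{9}$, that the extra equations still cut the fibre down to dimension $1$. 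This stratum is the step I expect to be the main obstacle, since $\mathsf{M}$ may drop further in rank and several $F_{i}$ must be used simultaneously. The exceptional locus of $\widehat{g}_{\Pi}$ is then $\widehat{g}_{\Pi}^{-1}$ of the rank-$\le2$ locus, which has codimension $\ge2$ in $\widehat{\Pi}$: over $E_{\overline{\Pi}}$, a divisor of $\overline{\Pi}_{1}$, generic fibres are finite. So the contraction is small.

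Crepancy is then formal. $\widehat{\Pi}$ and $\overline{\Pi}_{1}$ are normal, and $\widehat{g}_{\Pi}$ is small and birational, so $K_{\widehat{\Pi}}=\widehat{g}_{\Pi}^{*}K_{\overline{\Pi}_{1}}$ as Weil divisor classes. The latter is $\mQ$-Cartier because $\overline{\Pi}_{1}$ is a hypersurface in a weighted projective space, so its canonical class is a restriction of $\sO(k)$ by adjunction.
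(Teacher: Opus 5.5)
Your plan has a genuine gap at the smallness of $\widehat g_\Pi$, and therefore also at crepancy, which you derive from smallness.

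\textbf{Smallness.} Your claim that the exceptional locus ``is $\widehat g_\Pi^{-1}$ of the rank-$\le 2$ locus, which has codimension $\ge 2$'' fails as written. On $\overline\Pi_1$ the rank-$\le2$ locus of $\mathsf M$ is $E_{\overline\Pi}$ itself, and its preimage contains the divisor $E_{\widehat\Pi}$.

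What ``generic fibres over $E_{\overline\Pi}$ are finite'' actually gives is weaker. It shows that the image $Z$ of the exceptional locus is a proper closed subset of the divisor $E_{\overline\Pi}$, so $\operatorname{codim}_{\overline\Pi_1}Z\ge 2$. Since the nontrivial fibres are curves, the exceptional locus then has codimension $\ge 1$ in $\widehat\Pi$, not $\ge 2$. Smallness needs $\operatorname{codim}Z\ge 3$. The case you must exclude is exactly $Z$ being a divisor in $E_{\overline\Pi}$, i.e.\ $\widehat g_\Pi$ divisorial, and nothing in your plan rules this out.

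\textbf{How the paper closes it.} It proves crepancy first and independently, from the canonical class of $\Pi_{\mP}$ and the weights of $\widehat f_\Pi$. Hence the image of the exceptional locus lies in $\operatorname{Sing}\overline\Pi_1$. Since $\rho(\widehat\Pi/\overline\Pi_1)=1$, a divisorial $\widehat g_\Pi$ would have a prime exceptional divisor. Its image would be a codimension-$2$ component of $\operatorname{Sing}\overline\Pi_1$ not contained in $\{p_4=0\}$, because there is a nontrivial fibre over the $p_4$-point. An explicit computation on $\{p_4=1\}$ then gives the contradiction: after a polynomial coordinate change in which the four equations of the rank-$\le 1$ locus of $\mathsf M$ become part of a coordinate system, $\operatorname{Sing}\{F_{\overline\Pi}|_{p_4=1}=0\}$ has codimension $3$. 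Some codimension computation of this kind is indispensable. You also cannot simply import this argument while keeping your order of proof, since it uses crepancy.

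\textbf{Lesser remarks.}
\begin{itemize}
\item The step you call the main obstacle is not one. The fibre of $\widehat\Pi\to\overline{\mathcal T}_1$ is a closed subset of the $\mP^3$ with coordinates $(s_1:s_2:s_3:w_1)$, and $E_{\widehat\Pi}$ cuts it along the hyperplane $\{w_1=0\}$. So the finiteness of $E_{\widehat\Pi}\to E_{\overline\Pi}$, which you already quote, bounds every fibre dimension by $1$ with no rank stratification.
\item In your first step, $G_7,G_8$ are quadratic in $(s_1,s_2,s_3,w_1)$; the relevant linear system is $G_1$--$G_6$. The identity to verify is that the $4\times4$ minors of its $4\times 6$ coefficient matrix $\mathsf L$ generate the ideal of products of $2\times2$ minors of $\mathsf M$ with $F_{\overline\Pi}$. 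It is not a single relation $\sum c_{ijk}D_{ijk}=0$, and the $w_1$-coefficients involve $p,u,t$, not only the $t_{ijk}$.
\item ``Generically finite'' needs $\overline\Pi_1\not\subset E_{\overline\Pi}$, which you do not address. The paper gets it by checking that the unique solution over any point of $\{F_{\overline\Pi}=0\}\setminus E_{\overline\Pi}$ satisfies all of $G_1$--$G_9$.
\end{itemize}
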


\begin{proof}
For each point of $\overline{\mathcal{T}}_{1}$, the fiber of $\widehat{\Pi}\to\overline{\mathcal{T}}_{1}$ is the subset defined by $G_{1}$--$G_{9}$ in $\mP^{3}$ with the coordinates $s_{1},s_{2},s_{3},w_{1}$, where all other variables of $G_{1}$--$G_{9}$ are regarded as constants. Note that $G_{1}$--$G_{6}$ are linear and $G_7$--$G_9$ are quadratic in $s_{1},s_{2},s_{3},w_{1}$. 

We check that the ideal generated by all the $4\times4$ minors of the $4\times6$ matrix $\mathsf{L}$ determined by the coefficients of $s_{1},s_{2},s_{3},w_{1}$ in $G_{1}$--$G_{6}$ is generated by the products of the $2\times2$ minors of $\mathsf{M}$ and $F_{\overline{\Pi}}$ (\cite[Sec.~3]{Tak9}). Since $F_{\overline{\Pi}}$ is a linear combination of the $3\times3$ minors of $\mathsf{M}$, it vanishes at every point where all $2\times2$ minors of $\mathsf{M}$ vanish. 
Thus, at a point where $F_{\overline{\Pi}}\ne0$, we have $\rank\mathsf{L}=4$, and hence the equations $G_{1}=\cdots=G_{6}=0$ imply $s_{1}=s_{2}=s_{3}=w_{1}=0$. Therefore the fiber of $\widehat{\Pi}\to\overline{\mathcal{T}}_{1}$ over such a point is empty, and hence $\overline{\Pi}_{1}\subset\{F_{\overline{\Pi}}=0\}$.

By Lemma~\ref{lem:HypNormal} (2), the hypersurface $\{F_{\overline{\Pi}}=0\}$ is irreducible. 
Moreover, it has the same dimension as $\widehat{\Pi}$. 
In view of these facts, we first show that $\widehat{\Pi}\to\overline{\Pi}_{1}$ is birational. 
It suffices to show that the fiber over any point of $\{F_{\overline{\Pi}}=0\}\setminus E_{\overline{\Pi}}$ consists of a single point.
For any $D_{ijk}$, we find that among the equations $G_{1}$--$G_{6}$, there exist three independent linear equations in $s_{1},s_{2},s_{3}$ which can be uniquely solved at a point where $D_{ijk}\ne0$ (for instance, for $D_{123}$ we can take $G_{1}=G_{2}=G_{3}=0$). 
We checks that this solution also satisfies $G_{1}$--$G_{9}$, and hence the fiber consists of a single point.

Next, we show that every nontrivial $\widehat{g}_{\Pi}$-fiber  is $1$-dimensional. 
By Lemma \ref{lem:The-image-of=003000excep}, the restriction of $\widehat{g}_{\Pi}$ to $\widehat{\Pi}\cap\{w_{1}=0\}$ is finite. 
From the above argument, the image of every nontrivial $\widehat{g}_{\Pi}$-fiber is contained in $E_{\overline{\Pi}}$. 
Thus every nontrivial $\widehat{g}_{\Pi}$-fiber intersects $\{w_{1}=0\}$ in a finite nonempty set, and hence must be $1$-dimensional.

The crepancy of $\widehat{g}_{\Pi}$ follows from the computation of the canonical divisor of $\Pi_{\mP}$ (cf.~\cite[Prop.~5.5]{Tak5}) and the choice of the weighted blow-up $\widehat{f}_{\Pi}$.

It remains to show that $\widehat{g}_{\Pi}$ is small. 
Since it is crepant, the image of the exceptional locus is contained in the singular locus of $\overline{\Pi}_{1}$. 
As $\widehat{\Pi}$ has Picard number $2$, the relative Picard number of  $\widehat{g}_{\Pi}$ is $1$. 
Hence, if $\widehat{g}_{\Pi}$ were divisorial, then its exceptional locus would be a prime divisor. 
Since every nontrivial fiber is $1$-dimensional, this would imply that the singular locus of $\overline{\Pi}_{1}$ has an irreducible component of codimension $2$. We can check that there exists a nontrivial $\widehat{g}_{\Pi}$-fiber with $p_{4}\ne0$ (for example, the fiber over the $p_{4}$-point). 
Hence the image of the exceptional divisor would not be contained in $\{p_{4}=0\}$, and thus $\overline{\Pi}_{1}\cap\{p_{4}\ne0\}$ would have a singular locus of codimension $2$. 
It therefore suffices to show that the singular locus of $\overline{\Pi}_{1}\cap\{p_{4}\ne0\}$ has codimension $3$. 

Note that the singularities arising from the quotient of the smooth locus of $\{F_{\overline{\Pi}}|_{p_{4}=1}=0\}$ have codimension at least $3$ by the weights of the coordinates. 
Thus it suffices to show that the singular locus of $\{F_{\overline{\Pi}}|_{p_{4}=1}=0\}$ has codimension $3$. 
We omit the detailed computation and refer to \cite[Sec.~4]{Tak9}; we only sketch the argument. 
Along $p_{4}=1$, the common zero locus of the $2\times2$ minors of $\mathsf{M}$ is defined by four polynomials, and we can construct a polynomial coordinate change in which these four polynomials form part of a coordinate system. 
In this coordinate system, the singular locus of $\{F_{\overline{\Pi}}|_{p_{4}=1}=0\}$ can be determined explicitly, and we see that it has codimension $3$.
\end{proof}

\begin{rem}
The birational map $\overline{\Pi}_{1}\dashrightarrow\Pi_{\mP}$ can be regarded as a generalization of the Type ${\rm II}_{2}$ unprojection studied in \cite[Sec.~5.2]{Tay}. 
The title of this paper is motivated by this observation.
\end{rem}

Up to this point, the results of this paper, when restricted to $3$-folds, may not be entirely new to experts, apart from the reinterpretation in the framework of VGIT and a careful treatment of the technical details.
However, how to develop birational geometry further from the diagram \eqref{eq:firstres} seems to have been unknown even in the $3$-fold case. 
We think that providing this development is one of the main contributions of this paper. 
In the following subsections, we construct the key ingredients for this purpose. 
For these constructions, we use the diagrams \eqref{eq:ToricSarkisovDel} and \eqref{eq:ToricSarkisovDiv} in the cases of Type~Dp and Type~Div, respectively.

\subsection{Embedding $\overline{\Pi}_{1}$ into $\overline{\mathcal{T}}_{1}'$}

Set
\begin{equation}
I:=\widetilde{t}_{123}s_{123}+\widetilde{t}_{124}s_{124}+t_{125}s_{125}+t_{126}s_{126}+t_{135}s_{135}+\widetilde{t}_{136}s_{136}+\widetilde{t}_{245}s_{245}+s_{246},
\label{eq:hypst}
\end{equation}
which is obtained from $F_{\overline{\Pi}}$ by replacing each $D_{ijk}$ with $s_{ijk}$. 
Here $(i,j,k)$ runs over $(1,2,3)$, $(1,2,4)$, $(1,2,5)$, $(1,2,6)$, $(1,3,5)$, $(1,3,6)$, $(2,4,5)$, and $(2,4,6)$. In what follows, when we write $s_{ijk}$ or $D_{ijk}$, we always assume that $(i,j,k)$ ranges over this set. 
Using this notation, we identify the hypersurface $\overline{\Pi}_{1}=\{F_{\overline{\Pi}}=0\}\subset\overline{\mathcal{T}}_{1}$ with
\begin{equation}
	\label{eq:Pi1new}
\{I=0,s_{ijk}=D_{ijk}\}\subset\overline{\mathcal{T}}_{1}'.
\end{equation}
In this new $\overline{\Pi}_{1}$, we have
\[
E_{\overline{\Pi}}=\overline{\Pi}_{1}\cap\{s_{ijk}=0\}.
\]
\subsection{Constructing the strict transforms of $\overline{\Pi}_{1}$ for the
diagrams \eqref{eq:ToricSarkisovDel} and \eqref{eq:ToricSarkisovDiv}}
\label{subsec:Constructing-strict-transforms Pi1}

Recall that the affine space $\mA_{\mathtt{II}}$ was introduced at the beginning of Subsection~\ref{subsec:The-second-toric}. 
Note that the morphism $\mathcal{T}_{1}\to\overline{\mathcal{T}}_{1}'$ is given by
\begin{equation}
(w_{2}, *, s_{ijk}) \mapsto (*, w_{2}s_{ijk}).
\label{eq:T1T1'}
\end{equation}

In the GIT quotient $\mathcal{T}_{1}$ of $\mA_{\mathtt{II}}$, we set 
\begin{equation}
\Pi_{1}':=\{I=0,\; w_{2}s_{ijk}=D_{ijk}\},
\label{eq:Pi1'}
\end{equation}
where $(i,j,k)$ ranges over the set specified above. By \eqref{eq:Pi1new} and \eqref{eq:Pi1'}, the image of $\Pi_{1}'$ by the morphism $\mathcal{T}_{1}\to\overline{\mathcal{T}}_{1}$ is contained in $\overline{\Pi}_{1}$. Setting $w_{2}=1$ in \eqref{eq:Pi1'}, we see that $\Pi_{1}'$ dominates $\overline{\Pi}_{1}$ via the morphism $\mathcal{T}_{1}\to\overline{\mathcal{T}}_{1}$. Moreover, again by \eqref{eq:Pi1new} and \eqref{eq:Pi1'}, we see that the inverse image by $\Pi_{1}'\to \overline{\Pi}_{1}$ of a point in $\overline{\Pi}_{1}\setminus E_{\overline{\Pi}}$ consists of one point.  
However, $\Pi_{1}'$ is not irreducible, since it contains a component given by $\{I=w_2=D_{ijk}=0\}$, which dominates $E_{\overline{\Pi}}$.

To extract the irreducible component of $\Pi_{1}'$ that birationally dominates $\overline{\Pi}_{1}$, we first consider, inside the open subset $\{s_{135}\neq 0\}$ of $\mA_{\mathtt{II}}$, the subvariety defined by $I=0$ and the following equations:
\begin{equation}
\begin{aligned}
w_{2} & =\tfrac{1}{s_{135}}\,D_{135},\qquad 
p_{3}=-\tfrac{1}{s_{135}}\bigl(-p_{1}(s_{136}+s_{123}t_{1})+p_{2}(s_{125}t_{1}+s_{123}t_{2})+s_{125}u_{1}\bigr),\\[2pt]
p_{4} & =\tfrac{1}{s_{135}}\bigl(p_{1}s_{125}+p_{2}(s_{136}+s_{123}t_{1})+s_{123}u_{1}\bigr),\quad 
u_{2}=\tfrac{1}{s_{135}}\bigl(p_{1}s_{123}t_{2}-p_{2}s_{125}t_{2}+s_{136}u_{1}\bigr),\\[2pt]
s_{124} & =\tfrac{1}{s_{135}}\bigl(s_{125}^{2}+s_{123}s_{136}+s_{123}^{2}t_{1}\bigr),\quad 
s_{126}=\tfrac{1}{s_{135}}\bigl(s_{125}s_{136}-s_{123}^{2}t_{2}\bigr),\\[2pt]
s_{245} & =\tfrac{1}{s_{135}}\bigl(s_{136}^{2}+s_{125}^{2}t_{1}+2s_{123}s_{136}t_{1}+s_{123}^{2}t_{1}^{2}+s_{123}s_{125}t_{2}\bigr),\\[2pt]
s_{246} & =\tfrac{1}{s_{135}^{2}}\bigl(s_{136}^{3}+s_{125}^{2}s_{136}t_{1}+2s_{123}s_{136}^{2}t_{1}+s_{123}^{2}s_{136}t_{1}^{2}+s_{125}^{3}t_{2}\\
& \hspace{4.8em}{}+3s_{123}s_{125}s_{136}t_{2}+s_{123}^{2}s_{125}t_{1}t_{2}-s_{123}^{3}t_{2}^{2}\bigr).
\end{aligned}
\label{eq:s135}
\end{equation}
In the first equation of \eqref{eq:s135}, we use $D_{135}$ for notational simplicity; however, if we substitute the expressions for $p_{3}$, $p_{4}$, and $u_{2}$ given in the subsequent equations into $D_{135}$, then \eqref{eq:s135} can be regarded as eliminating the coordinates $w_{2}$, $p_{3}$, $p_{4}$, $u_{2}$, $s_{124}$, $s_{245}$, and $s_{246}$ from the coordinates of $\mA_{\mathtt{II}}$ (we will use the same idea immediately after Lemma~\ref{lem:sijkNull}). 
Moreover, by $I=0$, we can eliminate $t_{135}$. 
Thus, this subvariety itself is an open subset of an affine space.

Let $\Pi_{1}^{o}$ denote the subvariety of $\mathcal{T}_{1}\cap\{s_{135}\not=0\}$ defined by \eqref{eq:s135} and $I=0$. 
Then $\Pi_{1}^{o}$ has the same dimension as $\Pi_{\mP}$ and has at worst quotient singularities. 
We check that $\Pi_{1}^{o}\subset\Pi_{1}'$ and $\Pi_{1}^{o}$ dominates $\overline{\Pi}_{1}$. 

Let $\Pi_{\mathtt{II}}$ be the closure in $\mA_{\mathtt{II}}$ of the subvariety defined by $I=0$ and \eqref{eq:s135} inside $\{s_{135}\neq 0\}$. 
Although its defining equations can be obtained by elimination theory, they are extremely long, so we do not write them down here; see \cite[Sec.~5, closureEqs]{Tak9}. 
In what follows, we call an open subset of $\Pi_{\mathtt{II}}$ of the form $\{s_{ijk}\neq 0\}$ an \textit{$s_{ijk}$-chart} of $\Pi_{\mathtt{II}}$. 
We will also use the term “chart” for $\mA_{\mathtt{II}}$ and its GIT quotients.

\begin{lem}
\label{lem:sijkNull}
Any $s_{ijk}$-chart of $\,\Pi_{\mathtt{II}}$ is contained in one of the $s_{124}$-, $s_{135}$-, $s_{123}$-, or $s_{246}$-charts of $\,\Pi_{\mathtt{II}}$.
\end{lem}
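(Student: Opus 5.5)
The statement means that the $s_{124}$-, $s_{135}$-, $s_{123}$- and $s_{246}$-charts together cover every $s_{ijk}$-chart of $\Pi_{\mathtt{II}}$. Equivalently, I will show that on $\Pi_{\mathtt{II}}$
\[
\{s_{123}=s_{124}=s_{135}=s_{246}=0\}\subset\{s_{ijk}=0 \text{ for all } (i,j,k)\}.
\]
In algebraic terms, each of the remaining $s_{125}$, $s_{126}$, $s_{136}$, $s_{245}$ should lie in the radical of the ideal of $\Pi_{\mathtt{II}}$ plus $(s_{123},s_{124},s_{135},s_{246})$. The plan is to find explicit polynomial relations among the $s_{ijk}$ that hold on $\Pi_{\mathtt{II}}$, and then kill the remaining variables one at a time.

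\emph{Step 1: relations valid on all of $\Pi_{\mathtt{II}}$.} By construction, $\Pi_{\mathtt{II}}$ is the closure of the locus in $\{s_{135}\ne0\}$ cut out by $I=0$ and \eqref{eq:s135}, and it is irreducible because this locus is an open subset of an affine space. So any polynomial identity that holds on this open set, after clearing the denominators $s_{135}$, holds on all of $\Pi_{\mathtt{II}}$. Clearing denominators in \eqref{eq:s135} gives, among others,
\[
s_{135}s_{124}=s_{125}^{2}+s_{123}s_{136}+s_{123}^{2}t_{1},\qquad
s_{135}s_{245}=s_{136}^{2}+s_{125}^{2}t_{1}+2s_{123}s_{136}t_{1}+s_{123}^{2}t_{1}^{2}+s_{123}s_{125}t_{2},
\]
together with the relations for $s_{126}$ and $s_{135}^{2}s_{246}$. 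These relations are Plücker-type quadratic identities among the $3\times3$ minors $D_{ijk}$ of $\mathsf{M}$, transported via $w_{2}s_{ijk}=D_{ijk}$. There are more of them (the generators listed in \cite[Sec.~5, closureEqs]{Tak9}), for instance ones of the shape
\[
s_{126}^{2}\equiv0,\qquad s_{245}^{2}\equiv0 \pmod{(s_{123},s_{124},s_{135},s_{136},s_{125},s_{246})},
\]
which are obtained by eliminating $s_{135}$ from the above relations, or directly from the identities among the $D_{ijk}$.

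\emph{Step 2: successive vanishing.} Work on the locus $s_{123}=s_{124}=s_{135}=s_{246}=0$.
\begin{enumerate}
\item The first displayed relation gives $s_{125}^{2}=0$, hence $s_{125}=0$.
\item With $s_{123}=s_{125}=0$, the second displayed relation gives $s_{136}^{2}=0$, hence $s_{136}=0$.
\item It remains to treat $s_{126}$ and $s_{245}$. For these I would use relations of $\Pi_{\mathtt{II}}$ not divisible by $s_{135}$ on their left-hand sides. Natural candidates are the closure equations of the form $s_{124}s_{246}-s_{126}^{2}-\cdots$ and $s_{246}s_{123}-\cdots$. Modulo $(s_{123},s_{124},s_{125},s_{135},s_{136},s_{246})$ these should reduce to $s_{126}^{2}$ and $s_{245}^{2}$ times units, possibly up to a polynomial in $t_{1},t_{2}$.
\end{enumerate}

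\emph{Main obstacle and fallback.} The main obstacle is step (3): identifying closure equations that control $s_{126}$ and $s_{245}$. The equations in \eqref{eq:s135} all carry a factor $s_{135}$ and become vacuous once $s_{135}=0$, so the needed relations must come from the full elimination ideal. My first attempt is to take the quadratic Plücker-type relations among the eight minors $D_{ijk}$ of $\mathsf{M}$ (those not involving $D_{135}$) and substitute $D_{ijk}=w_{2}s_{ijk}$. If this is insufficient, I would fall back on the computer-algebra check of \cite[Sec.~5]{Tak9}: verify directly that $s_{126}$ and $s_{245}$ lie in the radical of the ideal of $\Pi_{\mathtt{II}}+(s_{123},s_{124},s_{135},s_{246})$.
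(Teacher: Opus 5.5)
Your approach is the paper's. The paper also reduces the lemma to showing that $s_{123}=s_{124}=s_{135}=s_{246}=0$ forces the remaining $s_{ijk}$ to vanish on $\Pi_{\mathtt{II}}$, and it simply asserts this from the defining equations computed in \cite{Tak9}, so your fallback is exactly what the paper relies on.

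Your step (3) can also be closed by hand. The identities $s_{123}s_{246}=-s_{126}^{2}+s_{124}s_{245}-s_{124}^{2}t_{1}$ and $s_{136}s_{246}=s_{245}^{2}+s_{126}^{2}t_{1}-2s_{124}s_{245}t_{1}+s_{124}^{2}t_{1}^{2}+s_{124}s_{126}t_{2}$ are the cleared forms of the $s_{246}$-chart equations \eqref{eq:s246}; they follow on the $s_{135}$-chart by substituting \eqref{eq:s135}, hence hold on all of $\Pi_{\mathtt{II}}$, and they give $s_{126}=0$ and then $s_{245}=0$.
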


\begin{proof}
By examining the defining equations of $\Pi_{\mathtt{II}}$, we see that if $s_{123}=s_{124}=s_{135}=s_{246}=0$, then all other $s_{ijk}$ also vanish. 
This proves the claim.
\end{proof}

The defining equations of the $s_{124}$-, $s_{135}$-, $s_{123}$-, and $s_{246}$-charts of $\Pi_{\mathtt{II}}$, excluding $I=0$, are as follows (see also \cite[Sec.~5]{Tak9}):

\noindent \textbf{The $s_{124}$-chart:} 
\begin{equation}
\begin{aligned}w_{2} & =\tfrac{1}{s_{124}}\,D_{124},\qquad p_{1}=\tfrac{1}{s_{124}}\bigl(p_{3}s_{123}+p_{4}s_{125}-p_{2}s_{126}\bigr),\\[2pt]
t_{1} & =\tfrac{1}{s_{124}^{2}}\bigl(-s_{126}^{2}+s_{124}s_{245}-s_{123}s_{246}\bigr),\qquad t_{2}=\tfrac{1}{s_{124}^{2}}\bigl(-s_{126}s_{245}+s_{125}s_{246}\bigr),\\[2pt]
u_{1} & =\tfrac{1}{s_{124}^{2}}\bigl(-p_{3}s_{124}s_{125}+p_{4}s_{125}s_{126}+p_{4}s_{123}s_{245}-p_{2}s_{124}s_{245}\bigr),\\[2pt]
u_{2} & =\tfrac{1}{s_{124}^{2}}\bigl(-p_{3}s_{124}s_{126}+p_{4}s_{126}^{2}+p_{4}s_{123}s_{246}-p_{2}s_{124}s_{246}\bigr),\\[2pt]
s_{135} & =\tfrac{1}{s_{124}^{2}}\bigl(s_{124}s_{125}^{2}+s_{123}s_{125}s_{126}+s_{123}^{2}s_{245}\bigr),\\
s_{136} & =\tfrac{1}{s_{124}^{2}}\bigl(s_{124}s_{125}s_{126}+s_{123}s_{126}^{2}+s_{123}^{2}s_{246}\bigr).
\end{aligned}
\label{eq:s124}
\end{equation}

\vspace{3pt}

\noindent \textbf{The $s_{135}$-chart:} we already write down this as (\ref{eq:s135}).

\vspace{3pt}

\noindent \textbf{The $s_{123}$-chart:} 
\begin{equation}
\begin{aligned}w_{2} & =\tfrac{1}{s_{123}}\,D_{123},\qquad p_{3}=\tfrac{1}{s_{123}}\bigl(p_{1}s_{124}+p_{2}s_{126}-p_{4}s_{125}\bigr),\\[2pt]
t_{1} & =-\tfrac{1}{s_{123}^{2}}\bigl(s_{125}^{2}-s_{124}s_{135}+s_{123}s_{136}\bigr),\qquad t_{2}=\tfrac{1}{s_{123}^{2}}\bigl(-s_{126}s_{135}+s_{125}s_{136}\bigr),\\[2pt]
u_{1} & =\tfrac{1}{s_{123}^{2}}\bigl(-p_{1}s_{123}s_{125}+p_{2}\bigl(s_{125}^{2}-s_{124}s_{135}\bigr)+p_{4}s_{123}s_{135}\bigr),\\[2pt]
u_{2} & =-\tfrac{1}{s_{123}^{2}}\bigl(p_{1}s_{123}s_{126}+p_{2}\bigl(s_{124}s_{136}-s_{125}s_{126}\bigr)-p_{4}s_{123}s_{136}\bigr),\\[2pt]
s_{245} & =\tfrac{1}{s_{123}^{2}}\bigl(-s_{124}s_{125}^{2}-s_{123}s_{125}s_{126}+s_{124}^{2}s_{135}\bigr),\\[2pt]
s_{246} & =\tfrac{1}{s_{123}^{2}}\bigl(-s_{124}s_{125}s_{126}-s_{123}s_{126}^{2}+s_{124}^{2}s_{136}\bigr).
\end{aligned}
\label{eq:s123}
\end{equation}

\vspace{3pt}

\noindent \textbf{The $s_{246}$-chart:} 
\begin{equation}
\begin{aligned}w_{2} & =\tfrac{1}{s_{246}}\,D_{246},\qquad p_{1}=\tfrac{1}{s_{246}}\bigl(p_{3}s_{245}-p_{3}s_{124}t_{1}+p_{4}s_{126}t_{1}+p_{4}s_{124}t_{2}+s_{126}u_{2}\bigr),\\[2pt]
p_{2} & =\tfrac{1}{s_{246}}\bigl(-p_{3}s_{126}+p_{4}s_{245}-p_{4}s_{124}t_{1}-s_{124}u_{2}\bigr),\\[2pt]
u_{1} & =\tfrac{1}{s_{246}}\bigl(-p_{3}s_{124}t_{2}+p_{4}s_{126}t_{2}+s_{245}u_{2}\bigr),\\[2pt]
s_{123} & =\tfrac{1}{s_{246}}\bigl(-s_{126}^{2}+s_{124}s_{245}-s_{124}^{2}t_{1}\bigr),\qquad s_{125}=\tfrac{1}{s_{246}}\bigl(s_{126}s_{245}+s_{124}^{2}t_{2}\bigr),\\[2pt]
s_{135} & =\tfrac{1}{s_{246}^{2}}\Bigl(s_{245}^{3}+s_{126}^{2}s_{245}t_{1}-2s_{124}s_{245}^{2}t_{1}+s_{124}^{2}s_{245}t_{1}^{2}-s_{126}^{3}t_{2}+3s_{124}s_{126}s_{245}t_{2}\\
 & \hspace{7.3em}{}-s_{124}^{2}s_{126}t_{1}t_{2}+s_{124}^{3}t_{2}^{2}\Bigr),\\[2pt]
s_{136} & =\tfrac{1}{s_{246}}\bigl(s_{245}^{2}+s_{126}^{2}t_{1}-2s_{124}s_{245}t_{1}+s_{124}^{2}t_{1}^{2}+s_{124}s_{126}t_{2}\bigr).
\end{aligned}
\label{eq:s246}
\end{equation}

\begin{defn}
By considering the unstable loci of the GIT quotients corresponding to $\mathcal{T}_{i}\,(i\geq1)$ in the diagrams \eqref{eq:ToricSarkisovDel} and \eqref{eq:ToricSarkisovDiv}, we see that at least one of the $s_{ijk}$ is nonzero on $\mathcal{T}_{i}$. 
Therefore, by Lemma~\ref{lem:sijkNull}, the image of the union of the $s_{124}$-, $s_{135}$-, $s_{123}$-, and $s_{246}$-charts of $\Pi_{\mathtt{II}}$ defines a projective variety in $\mathcal{T}_{i}$, which we
denote by $\Pi_{i}$.
\end{defn}

\begin{lem}
\label{lem:=00306F=009AD8=003005=005546=007279=007570=0070B9=003057=00304B=006301=00305F=00306A=003044=003002}
The variety $\,\Pi_{i}\, (i\geq 1)$ has the same dimension as $\Pi_{\mP}$ and has at worst quotient singularities.
\end{lem}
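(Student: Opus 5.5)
By Lemma~\ref{lem:sijkNull} and the definition of $\Pi_{i}$, $\Pi_{i}$ is covered by the images in $\mathcal{T}_{i}$ of the $s_{124}$-, $s_{135}$-, $s_{123}$- and $s_{246}$-charts of $\Pi_{\mathtt{II}}$. Both assertions are local, so the plan is to prove them chart by chart. On each chart I will show that the chart is an open subset of an affine space of the expected dimension. Its image in $\mathcal{T}_{i}$ is then the quotient of that chart by the $(\mC^{*})^{2}$-action, restricted to the stable locus. This quotient has at worst quotient singularities, because a torus acting on a smooth variety with finite stabilizers on the stable locus gives a quotient that is étale-locally a finite quotient of a smooth variety (Luna slice).

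The key step is the explicit elimination in each chart. For the $s_{135}$-chart this was already done right after \eqref{eq:s135}. The equations there express $w_{2},p_{3},p_{4},u_{2},s_{124},s_{126},s_{245},s_{246}$ as regular functions on $\{s_{135}\ne0\}$, and $I=0$ is solved for $t_{135}$, since its coefficient in $I$ is $s_{135}\ne0$. So the chart is $\{s_{135}\neq0\}$ in the affine space with the remaining free coordinates.

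For the $s_{124}$-chart I will use \eqref{eq:s124}. These equations solve for $w_{2},p_{1},t_{1},t_{2},u_{1},u_{2},s_{135},s_{136}$ in terms of the other variables, and the linear equation $I=0$ is solved for $\widetilde{t}_{124}$, i.e.\ for $t_{124}$. The $s_{123}$-chart is handled the same way: \eqref{eq:s123} eliminates $w_{2},p_{3},t_{1},t_{2},u_{1},u_{2},s_{245},s_{246}$, and $I=0$ is solved for $t_{123}$, using that $\widetilde{t}_{123}=t_{123}+2t_{1}t_{136}$ and $t_{1}$ has already been eliminated. For the $s_{246}$-chart, $I$ contains $s_{246}$ with coefficient $1$, so $I=0$ cannot be solved for a $t$-variable in the same way. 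Instead, after substituting \eqref{eq:s246}, I will solve $I=0$ for one of $t_{125},t_{126}$, whose coefficients $s_{125},s_{126}$ may vanish, or better for $s_{246}$ itself. The cleanest route is to check that $I$ restricted to the chart is a coordinate, i.e.\ that its differential is nowhere zero; its partial derivatives with respect to the $t$-variables are the $s_{ijk}$, which on this chart include $s_{246}\neq0$ through the $t$-independent term.

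Next I need to confirm that each chart of $\Pi_{\mathtt{II}}$ is exactly the locus described by these equations. This is not automatic, since $\Pi_{\mathtt{II}}$ was defined as the closure of the $s_{135}$-chart. I plan to show that the equations in each chart are consequences of the closure equations from \cite[Sec.~5]{Tak9} after inverting $s_{ijk}$. The locus they define is an open subset of an affine space, hence irreducible, and it meets $\{s_{135}\neq0\}$ in a dense open set. So it coincides with the chart. This identification is the step I expect to be the main obstacle, because it relies on the explicit closure equations and a computer check.

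The dimension count then follows. The number of free coordinates is the number of coordinates of $\mA_{\mathtt{II}}$ minus $9$ (the eight eliminated variables plus one from $I$). By construction this equals $\dim\Pi_{\mA}+1$, one more than in the $s_{135}$-chart computation already noted, where $\Pi_{1}^{o}$ has the dimension of $\Pi_{\mP}$. Dividing by the $2$-dimensional torus gives $\dim\Pi_{i}=\dim\Pi_{\mP}$. Finiteness of stabilizers on $\Pi_{i}$ is inherited from the fact that $\mathcal{T}_{i}$ is a $\mQ$-factorial toric variety with simplicial chamber, so its stable locus has finite stabilizers. This gives the quotient-singularity assertion.
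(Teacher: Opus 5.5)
Your overall route is the paper's. The paper's proof just observes that the $s_{124}$-, $s_{135}$-, $s_{123}$- and $s_{246}$-charts of $\Pi_{\mathtt{II}}$ are open subsets of affine space and that the $(\mC^{*})^{2}$-action is free in codimension $1$. Your finite-stabilizer and Luna-slice argument for a chamber of the toric GIT is enough to get quotient singularities. Your dimension count is also correct: $25-9-2=14$ for Type~Dp and $24-9-2=13$ for Type~Div. The $s_{135}$-, $s_{124}$- and $s_{123}$-charts are handled correctly, because there $I$ is linear in $t_{135}$, $t_{124}$, $t_{123}$ with coefficient $s_{135}$, $s_{124}$, $s_{123}$ respectively.

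The $s_{246}$-chart step does not work as written, for two reasons.
\begin{itemize}
\item You cannot solve $I=0$ for $s_{246}$. After substituting \eqref{eq:s246}, the variable $s_{246}$ also appears in the denominators of $s_{123},s_{125},s_{135},s_{136}$, and clearing them makes $I$ cubic in $s_{246}$.
\item The reason you give for smoothness is wrong. The partial derivatives of $I$ with respect to the $t$-variables are $s_{123},s_{124},s_{125},s_{126},s_{135},3s_{136}+2t_{1}s_{123},3s_{245}-2t_{1}s_{124}$, and $s_{246}$ is not among them.
\end{itemize}

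The repair takes one line. If all of these partials vanish at a point, then $I=s_{246}\neq0$ there. So along $\{I=0\}$ some $t$-partial is nonzero, the chart is smooth, and smoothness is all the lemma needs.

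Alternatively, you can drop this chart altogether. By \eqref{eq:s246}, $s_{123}=s_{124}=0$ forces $s_{126}=0$. Then $s_{135}=0$ forces $s_{245}=0$, and hence $s_{125}=s_{136}=0$, giving $I=s_{246}\neq0$. So on $\Pi_{\mathtt{II}}$ the $s_{246}$-chart is already covered by the other three charts.
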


\begin{proof}
Note that each of the $s_{124}$-, $s_{135}$-, $s_{123}$-, and $s_{246}$-charts of $\Pi_{\mathtt{II}}$ is an open subset of an affine space. 
Moreover, we check that the $(\mC^{*})^{2}$-action is free in codimension $1$. 
The claim follows.
\end{proof}

\subsection{Restrictions of the diagrams \eqref{eq:ToricSarkisovDel} and
\eqref{eq:ToricSarkisovDiv}}\label{subsec:The-restrictions-of 2nd}

\subsubsection{\textbf{\textup{Type~Dp}}}

By restricting the diagram \eqref{eq:ToricSarkisovDel} to $\Pi_{i}\,(i\geq1)$, we obtain the following diagram:
\begin{equation}
	\label{eq:DpPi}
\xymatrix@C=8pt{
& \textcolor{gray}{\Pi_1}\ar@[gray][dl]\ar[dr]  && \Pi_2\ar[dl]\ar[dr]
&& \Pi_3\ar[dl]\ar[dr] && \textcolor{gray}{\Pi_4}\ar[dl]\ar@[gray][dr]
&& \textcolor{gray}{\Pi_5}\ar@[gray][dl]\ar[dr] && \textcolor{gray}{\Pi_6}\ar[dl]\ar@[gray][dr] &\\
\textcolor{gray}{\overline{\Pi}_1} && \overline{\Pi}_2 && \overline{\Pi}_3
&& \overline{\Pi}_4 && \textcolor{gray}{\overline{\Pi}_5}
&& \overline{\Pi}_6 && \textcolor{gray}{\mP^1},
}
\end{equation}
where $\overline{\Pi}_i$ are defined as the images of the corresponding morphisms appearing in \eqref{eq:ToricSarkisovDel}.
By examining the unstable loci of the GIT quotients and the exceptional loci via the equations of  $\Pi_{\mathtt{II}}$, we see that all the morphisms are isomorphisms except for 
$\Pi_{1}\to\overline{\Pi}_{1}$,
$\Pi_{4}\to\overline{\Pi}_{5}$,
$\Pi_{5}\to\overline{\Pi}_{5}$, and
$\Pi_{6}\to\mP^{1}$, which are highlighted in gray in the diagram \eqref{eq:DpPi}.
Thus, we may rewrite \eqref{eq:DpPi} as
\begin{equation}\label{eq:SarkisovDelkey}
\xymatrix@C=15pt{
& \Pi_1\ar[dl]_{\widetilde{g}_{\Pi}}\ar[dr]
&& \widetilde{\Pi}\ar[dl]\ar[dr]^{\widetilde{f}_\Pi} \\
\overline{\Pi}_1 && \mathring{\Pi} && \mP^1.
}
\end{equation}
We now study this diagram starting from the right-hand side.

\begin{prop}
\label{prop:transition-fcn}
The morphism $\,\widetilde{f}_{\Pi}\colon \widetilde{\Pi}\to \mP^{1}$ is a locally trivial 
$\,\mP(1^{2},2^{3},3^{2},4^{2},5,6,d-3,d-2,d-1)$-bundle over $\,\mP^{1}$ with homogeneous coordinates $\,(s_{135}:s_{123})$. 
In particular, $\widetilde{\Pi}$ has Picard number $\,2$. 

The fiber coordinates and the transition functions are given as follows, where
we omit coordinate transformations that differ only by a power of $\,\left(\frac{s_{135}}{s_{123}}\right)^{\pm1}$.

\noindent \textbf{$s_{135}\neq0:$}

Fiber coordinates:
\[
z,p_{1},p_{2},u_{1},t_{1},t_{2},t_{123},t_{124},t_{125},t_{126},t_{136},t_{245},
\frac{s_{125}}{s_{135}},\frac{s_{136}}{s_{135}}.
\]

Transition functions:
\begin{equation}
	\begin{aligned}
		t_{1} 
		&= -\left(\frac{s_{125}}{s_{123}}\right)^{2}
		+\frac{s_{135}}{s_{123}} \frac{s_{124}}{s_{123}}
		- \frac{s_{136}}{s_{123}},\\
		t_{2} 
		&= -\frac{s_{135}}{s_{123}}\frac{s_{126}}{s_{123}}
		+ \frac{s_{125}}{s_{123}}\frac{s_{136}}{s_{123}},\\
		u_{1} 
		&= -p_{1}\left(\frac{s_{125}}{s_{123}}\right)
		+ p_{2}\left(\left(\frac{s_{125}}{s_{123}}\right)^{2}
		- \frac{s_{135}}{s_{123}}\frac{s_{124}}{s_{123}}\right)
		+ \frac{s_{135}}{s_{123}}p_{4}.
	\end{aligned}
	\label{eq:s123-1}
\end{equation}

\vspace{3pt}

\noindent \textbf{$s_{123}\neq0:$}

Fiber coordinates:
\[
z,p_{1},p_{2},p_{4},t_{123},t_{124},t_{125},t_{126},t_{136},t_{245},
\frac{s_{124}}{s_{123}},\frac{s_{125}}{s_{123}},
\frac{s_{126}}{s_{123}},\frac{s_{136}}{s_{123}}.
\]
\begin{equation}
	\begin{aligned}
		\frac{s_{124}}{s_{123}} 
		&= \left(\frac{s_{123}}{s_{135}}\right)^{-1}\left(\frac{s_{125}}{s_{135}}\right)^{2}
		+ \frac{s_{136}}{s_{135}}
		+ \left(\frac{s_{123}}{s_{135}}\right)t_{1},\\
		\frac{s_{126}}{s_{123}} 
		&= \frac{s_{136}}{s_{135}} 
		- \left(\frac{s_{123}}{s_{135}}\right)t_{2},\\
		p_{4} 
		&=p_{1}\left(\frac{s_{125}}{s_{135}}\right) 
		+ p_{2}\left(\frac{s_{136}}{s_{135}} +\frac{s_{123}}{s_{135}} t_{1}\right)
		+ \left(\frac{s_{123}}{s_{135}}\right)u_{1}.
	\end{aligned}
	\label{eq:s135-1}
\end{equation}
\end{prop}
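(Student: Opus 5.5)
The plan is to identify $\widetilde{\Pi}=\Pi_6$ explicitly on the two charts of $\mathcal{T}_6$ over $\mP^1$ and read off the fibration. The rightmost column of \eqref{eq:wtmatDp1} is the column of $s_{135},s_{123}$, with weights $(3d-3,1)$. So the unstable locus of the chamber defining $\mathcal{T}_6$ forces $(s_{135},s_{123})\neq(0,0)$, and $\mathcal{T}_6\to\mP^1$ is given by $(s_{135}:s_{123})$. Consequently $\widetilde{\Pi}$ is covered by the $s_{135}$-chart and the $s_{123}$-chart of $\Pi_{\mathtt{II}}$, modulo $(\mC^*)^2$. By Lemma~\ref{lem:sijkNull} and the stability condition, the $s_{124}$- and $s_{246}$-charts are not needed here.

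On the $s_{135}$-chart I use \eqref{eq:s135} and $I=0$. These eliminate $w_2,p_3,p_4,u_2,s_{124},s_{126},s_{245},s_{246},t_{135}$, so the chart is an open subset of affine space. The remaining coordinates are $z,p_1,p_2,u_1,t_1,t_2$, the $t_{ijk}$ other than $t_{135}$, and $s_{123},s_{125},s_{136},s_{135}$. I then use the second $\mC^*$-factor (second row, where all $s_{ijk}$ have weight $1$) to set $s_{135}=1$. This replaces $s_{125},s_{136},s_{123}$ by the ratios with $s_{135}$, and $s_{123}/s_{135}$ is the affine coordinate of the base $\mP^1$.

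The residual action is the first row restricted to the stabilizer of $s_{135}=1$; equivalently, take the combination of the two rows killing the weight of $s_{135}$. Its weights on the fiber coordinates are $w(*)$ for the $*$'s and $3d-2-(3d-3)=1$, $3d-1-(3d-3)=2$ for $s_{125}/s_{135}$ and $s_{136}/s_{135}$. I then check that this multiset equals $(1^{2},2^{3},3^{2},4^{2},5,6,d-3,d-2,d-1)$ using the weights from \cite[Table~3]{Tak6}. Since $s_{123}/s_{135}$ has weight $1-(3d-3)$ under this combination, not $0$, I modify by powers of $s_{135}/s_{123}$. This is exactly the ambiguity the statement allows us to omit, and after it the base coordinate becomes invariant. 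The result is a product of $\mA^1$ with the weighted projective space $\mP(1^{2},2^{3},\ldots)$ over the chart $\{s_{135}\neq0\}$ of $\mP^1$.

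The $s_{123}$-chart is handled symmetrically via \eqref{eq:s123}, giving the second list of fiber coordinates. The transition functions \eqref{eq:s123-1} and \eqref{eq:s135-1} come from substituting one chart's elimination formulas into the other on the overlap $s_{135}s_{123}\neq0$. Concretely, \eqref{eq:s123-1} is \eqref{eq:s123} rewritten with $s_{123}$ as the invertible variable, and \eqref{eq:s135-1} comes from \eqref{eq:s135}, solved for $s_{124}$, $s_{126}$ and $p_4$. These maps are weighted-homogeneous polynomial automorphisms in the fiber variables with coefficients regular on the overlap. So they glue the two trivializations, and $\widetilde{f}_\Pi$ is locally trivial.

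The Picard number is then $2$: $\widetilde{\Pi}$ is a $\mQ$-factorial weighted-projective-space bundle over $\mP^1$ with relative Picard number $1$. Alternatively, it follows from the isomorphism in codimension one with $\widehat{\Pi}$ in diagram \eqref{eq:SarkisovDelkey}.

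I expect the main obstacle to be checking that the residual group action on each chart is the stated diagonal weighted action with no extra finite isotropy. This amounts to checking that the second-row normalization $s_{135}=1$ leaves no $\mu_m$ acting nontrivially on the base coordinate, which is where the $(s_{135}/s_{123})^{\pm1}$ rescalings enter. The second point is confirming that the transition maps are invertible, i.e. triangular in a suitable order of the variables. For the latter I would order the coordinates by weight and observe that each transition is the identity plus terms in lower-weight variables, which makes it unipotent and hence an automorphism.
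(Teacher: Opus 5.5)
Your route is the paper's. You cover $\widetilde{\Pi}=\Pi_6$ by the $s_{135}$- and $s_{123}$-charts, since the column $(3d-3,1)$ of $s_{135},s_{123}$ is the only one beyond the chamber of $\mathcal{T}_6$. You eliminate coordinates with \eqref{eq:s135}, \eqref{eq:s123} and $I=0$, read the fiber weights from the first row of \eqref{eq:wtmatDp1} minus $(3d-3)$ times the second, and read the gluing off the chart equations.

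However, one step is wrong as written. Because $s_{123}$ and $s_{135}$ occupy the \emph{same} column $(3d-3,1)$, the residual weight of $s_{123}$ is $(3d-3)-(3d-3)\cdot 1=0$, not $1-(3d-3)$. So $s_{123}/s_{135}$ has bidegree $(0,0)$ and is already invariant under all of $(\mC^*)^2$; this is exactly why $(s_{135}:s_{123})$ are homogeneous coordinates on the base $\mP^1$. Had its weight really been nonzero, your repair could not work: multiplying $s_{123}/s_{135}$ by powers of $s_{135}/s_{123}$ never gives a nonconstant invariant, and $(s_{135}:s_{123})$ would not even define the map to $\mP^1$. The transformations the statement omits are something else. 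They are the diagonal part of the gluing between fiber coordinates normalized in the two charts, e.g.\ $\frac{s_{125}}{s_{123}}=\bigl(\frac{s_{123}}{s_{135}}\bigr)^{-1}\frac{s_{125}}{s_{135}}$ and $\frac{s_{136}}{s_{123}}=\bigl(\frac{s_{123}}{s_{135}}\bigr)^{-1}\frac{s_{136}}{s_{135}}$.

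For the same reason, the obstacle you anticipate does not exist. Since $s_{135}$ has second-row weight $1$, the stabilizer of $\{s_{135}=1\}$ is the connected torus $\{(\lambda,\lambda^{-(3d-3)})\}$, so no $\mu_m$ survives. What does deserve a line is that a point of the chart is unstable exactly when all fourteen fiber coordinates vanish. This holds because \eqref{eq:s135} and $I=0$ then force $w_2,p_3,p_4,u_2,t_{135}$ and all $s_{ijk}$ other than $s_{123},s_{135}$ to vanish.

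The gluing maps are also not unipotent. Place the unchanged coordinates $z,p_1,p_2,t_{124},t_{125},t_{126},t_{136},t_{245}$ first. Then pair $s_{125}/s_{135}$, $s_{136}/s_{135}$, $t_1$, $t_2$, $u_1$, $t_{123}$, in this order, with $s_{125}/s_{123}$, $s_{136}/s_{123}$, $s_{124}/s_{123}$, $s_{126}/s_{123}$, $p_4$, $t_{135}$. With this ordering the maps are triangular with diagonal entries $\pm(s_{123}/s_{135})^{\pm1}$, which are units on the overlap; that is all you need.

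Finally, drop the alternative argument for the Picard number. In the paper, the smallness of $\widetilde{g}_{\Pi}$, and hence the codimension-one isomorphism with $\widehat{\Pi}$, is deduced in Proposition~\ref{prop:flop} from the Picard number computed here, so that argument would be circular.

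If you carry out the substitutions you will also find two misprints in the statement. First, the second fiber list should contain $t_{135}$ rather than $t_{123}$, since $t_{123}$ is the coordinate eliminated by $I=0$ on the $s_{123}$-chart. Second, weight-homogeneity forces $\frac{s_{126}}{s_{123}}=\bigl(\frac{s_{123}}{s_{135}}\bigr)^{-1}\frac{s_{125}}{s_{135}}\frac{s_{136}}{s_{135}}-\frac{s_{123}}{s_{135}}t_2$ in \eqref{eq:s135-1}.
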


\begin{proof}
Note that the morphism $\widetilde{\Pi}\to\mP^{1}$ coincides with $\Pi_{6}\to\mP^{1}$. 
By examining the unstable locus of the GIT quotient defining $\Pi_{6}$, we see that at least one of $s_{135}$ or $s_{123}$ is nonzero on $\Pi_{6}$. 
Therefore, we may study $\widetilde{f}_{\Pi}$ using \eqref{eq:s135} and \eqref{eq:s123}.

On the $s_{123}$-chart, a fiber of $\widetilde{f}_{\Pi}$ is obtained as the weighted projective space given by removing from the coordinates of $\Pi_{\mathtt{II}}$ those appearing on the left-hand side of \eqref{eq:s123}, together with $t_{123}$, where $t_{123}$ can be eliminated using \eqref{eq:hypst}. The weights of these coordinates are read off from the first row of the matrix obtained by subtracting $(3d-3)$ times the second row from the first row of \eqref{eq:wtmatDp1}. 

The same argument applies to the $s_{135}$-chart. 
The transition functions are read directly from \eqref{eq:s135} and \eqref{eq:s123}.
\end{proof}

\begin{prop}
\label{prop:flipping}
The morphism $\,\Pi_{1}\to\mathring{\Pi}$ is a flipping contraction, and $\,\Pi_{1}\dashrightarrow\widetilde{\Pi}$ is its flip. 
The flip is a toric flip of type $(1^{2},2^{2},3,4^{2},5,6^{2},d-3,d-1,d,\ -2,-1)$. 

The flipping locus of $\ \Pi_{1}\to\mathring{\Pi}$ has coordinates 
$z$, $p_{2}$, $p_{3}$, $p_{4}$, $t_{123}$, $t_{125}$, $t_{126}$, $t_{135}$, $t_{136}$, $t_{245}$, $s_{126}$, $s_{245}$, $s_{246}$, 
and the flipped locus of  $\ \widetilde{\Pi}\to\mathring{\Pi}$ has coordinates $s_{125},s_{123}$.
\end{prop}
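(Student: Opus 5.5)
The plan is to identify $\mathring{\Pi}$ with $\overline{\Pi}_{5}$ and compare $\Pi_{4}$ and $\Pi_{5}$ locally over it. By the discussion after \eqref{eq:DpPi}, $\Pi_{1}\to\mathring{\Pi}$ is identified with $\Pi_{4}\to\overline{\Pi}_{5}$, because $\Pi_{1}\cong\Pi_{2}\cong\Pi_{3}\cong\Pi_{4}$. Likewise $\widetilde{\Pi}\to\mathring{\Pi}$ is $\Pi_{5}\cong\Pi_{6}\to\overline{\Pi}_{5}$. The wall crossed is the column of $s_{125}$ in \eqref{eq:wtmatDp1}. So everything reduces to a toric wall crossing restricted to $\Pi_{\mathtt{II}}$, and I would describe this crossing chart by chart using the four charts of Lemma~\ref{lem:sijkNull}.

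\textbf{Step 1: the unstable loci and the two exceptional loci.} I would list the coordinates whose weight column lies strictly on each side of the $s_{125}$-wall.
\begin{itemize}
\item On one side: $s_{135},s_{123}$.
\item On the other side: $w_{2}$, $*$, $s_{246}$, $s_{245}$, $s_{126}$, $s_{136}$, $s_{124}$.
\end{itemize}
The exceptional locus of $\Pi_{4}\to\overline{\Pi}_{5}$ is then $\Pi_{4}\cap\{s_{135}=s_{123}=0\}$. The exceptional locus of $\Pi_{5}\to\overline{\Pi}_{5}$ is the locus where all coordinates of the first group vanish except $s_{125}$ and $s_{123}$ (and $s_{135}$, which is eliminated there). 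Using the equations of $\Pi_{\mathtt{II}}$ (and \eqref{eq:hypst}), I would check that these loci are nonempty and have codimension at least $2$. On the flipping side, Lemma~\ref{lem:sijkNull} forces $s_{124}\neq0$ or $s_{246}\neq0$, so the flipping locus is covered by the $s_{124}$- and $s_{246}$-charts. I expect the $s_{246}$-chart \eqref{eq:s246} to suffice, possibly after showing that the part with $s_{246}=0$ is empty or lies in its closure.

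\textbf{Step 2: local toric model.} In the $s_{246}$-chart I would use \eqref{eq:s246} and $I=0$ to eliminate $w_{2},p_{1},p_{2},u_{1},s_{123},s_{125},s_{135},s_{136}$ and $t_{124}$. Near the flipping locus the remaining coordinates are free, so $\Pi_{4}$ is locally a toric quotient of an open set of affine space. I would do the same on the $\Pi_{5}$ side near the flipped locus, using the $s_{123}$-chart \eqref{eq:s123} and the $s_{135}$-chart \eqref{eq:s135}.

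Next I would pass to the residual $\mC^{*}$ that acts on a fibre over $\overline{\Pi}_{5}$, namely the one-parameter subgroup orthogonal to the $s_{125}$-column. I would normalise it by the stabiliser (dividing out $s_{246}=1$). Then I would read off the weights of the surviving coordinates. These should be $1^{2},2^{2},3,4^{2},5,6^{2},d-3,d-1,d$ on $z,p_{2},p_{3},p_{4},t_{123},\dots,s_{126},s_{245}$, and $-2,-1$ on $s_{125},s_{123}$. I would also check that the two local models glue compatibly over $\overline{\Pi}_{5}$, so that the whole wall crossing is the toric flip of type $(1^{2},2^{2},3,4^{2},5,6^{2},d-3,d-1,d,\,-2,-1)$.

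\textbf{Step 3: flipping contraction and flip.} Both exceptional loci have codimension at least $2$, so the two contractions are small. The relative Picard number is $1$, since $\Pi_{1}$ has Picard number $2$: it is isomorphic in codimension one to $\widehat{\Pi}$ via Proposition~\ref{prop:widehat=00007BPi=00007D bir}, and $\widetilde{\Pi}$ has Picard number $2$ by Proposition~\ref{prop:transition-fcn}.

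$K$-negativity on $\Pi_{1}$ and $K$-positivity on $\widetilde{\Pi}$ follow from the toric model. For a toric flip of type $(a_{1},\dots,a_{n},-b_{1},-b_{2})$, the sign of $K$ on the exceptional curves is the sign of $\sum a_{i}-\sum b_{j}$, which is clearly positive here. Equivalently, one can use the second-row description of $-K$ and check that $-K_{\Pi_{4}}$ lies on the $\Pi_{4}$-side of the $s_{125}$-wall. Hence $\Pi_{4}\to\overline{\Pi}_{5}$ is a flipping contraction and $\Pi_{5}$ is its flip.

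The main obstacle will be Step 2. One must verify that the flipping locus really lies in charts where exactly the listed coordinates survive, with no extra equation, and that the elimination is compatible with the $\mC^{*}$-weights. This in turn requires confirming that the closure $\Pi_{\mathtt{II}}$ adds no further components over $s_{123}=s_{135}=0$. I would check this first by using \eqref{eq:s124} and \eqref{eq:s246} to show that, when $s_{123}=s_{135}=0$, the remaining equations impose no condition beyond those already used.
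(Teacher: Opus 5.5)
\textbf{Gap: the wall is misidentified.} Your reduction to $\Pi_4\to\overline{\Pi}_5\leftarrow\Pi_5$ is correct, but Steps 1--2 are built on the wrong wall. By the labelling of \eqref{eq:ToricSarkisovDel}, $\overline{\mathcal{T}}_5$ is the sixth column of \eqref{eq:wtmatDp1}, i.e.\ the ray $(3d-1,1)$ of $s_{136},s_{124}$. The $s_{125}$-column $(3d-2,1)$ is $\overline{\mathcal{T}}_6$, and crossing it ($\Pi_5\to\overline{\Pi}_6\leftarrow\Pi_6$) is an isomorphism on $\Pi$.

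For the actual flip, the residual $\mC^*$ is obtained by subtracting $(3d-1)$ times the second row from the first. This gives weights $3d-1,\,w(*),\,4,\,2,\,1$ to $w_2,*,s_{246},s_{245},s_{126}$, weight $0$ to the wall coordinates $s_{124},s_{136}$, and $-1,-2,-2$ to $s_{125},s_{135},s_{123}$. So the exceptional locus is $\Pi_4\cap\{s_{123}=s_{125}=s_{135}=0\}$.

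With your prescription (the subgroup orthogonal to the $s_{125}$-column) you would instead get $s_{125}$ of weight $0$, $s_{123},s_{135}$ of weight $-1$, and $s_{126},s_{245},s_{246}$ of weights $2,3,5$. The type you announce in Step 2 is therefore not what your computation would produce; it is copied from the statement. Your set $\Pi_4\cap\{s_{123}=s_{135}=0\}$ agrees with the true exceptional locus only because the equations happen to force $s_{125}=0$ there.

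\textbf{Gap: the chart is wrong.} The flipping locus is a weighted projective space with $s_{246}$ among its homogeneous coordinates. It therefore meets $\{s_{246}=0\}$ and is not covered by the $s_{246}$-chart. There are further problems with that chart:
\begin{itemize}
\item In \eqref{eq:s246} the flipped coordinates $s_{123},s_{125}$ are eliminated, so you cannot see the flipped side.
\item $t_{124}$ cannot be solved from $I=0$ there, because its coefficient $s_{124}$ is not a unit on that chart.
\item Normalising $s_{246}=1$ discards a coordinate that must survive with weight $4$.
\end{itemize}

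\textbf{The missing step.} On $\{s_{123}=s_{125}=s_{135}=0\}$ the equations of $\Pi_{\mathtt{II}}$ force $s_{136}=0$. The locus $\{s_{123}=s_{124}=s_{125}=s_{135}=s_{136}=0\}$ is unstable for $\mathcal{T}_4$, so $s_{124}\neq0$ on the whole flipping locus. Set the wall coordinate $s_{124}=1$; this is what absorbs the second $\mC^*$. Then \eqref{eq:s124} eliminates $w_2,p_1,t_1,t_2,u_1,u_2,s_{135},s_{136}$, and $I=0$ eliminates $t_{124}$. What remains are exactly $z,p_2,p_3,p_4,t_{123},t_{125},t_{126},t_{135},t_{136},t_{245},s_{126},s_{245},s_{246}$ together with $s_{125},s_{123}$, all free and carrying the stated weights. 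This exhibits the toric flip directly.

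\textbf{Step 3.} Deduce $\rho(\Pi_1)=2$ from $\rho(\widetilde{\Pi})=2$ across the small map $\Pi_1\dashrightarrow\widetilde{\Pi}$, both sides being $\mathbb{Q}$-factorial. Do not argue via $\widehat{\Pi}$: that $\Pi_1$ and $\widehat{\Pi}$ are isomorphic in codimension one needs $\widetilde{g}_\Pi$ to be small. That fact does not follow from Proposition~\ref{prop:widehat=00007BPi=00007D bir}, and the paper proves it only later, using the present proposition.
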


\begin{proof}
The claims follow from the defining equations of $\Pi_{\mathtt{II}}$ together with the description of exceptional loci in toric VGIT (see \cite[Lem.~4.5]{BZ} and \cite[Lem.~15.3.11]{CoLS}).

We recall that the morphism $\Pi_{1}\to\mathring{\Pi}$ coincides with $\Pi_{4}\to\overline{\Pi}_{5}$. 
Its exceptional locus is $\{s_{123}=s_{125}=s_{135}=0\}\cap\Pi_{4}$. 
Using the defining equations of $\Pi_{\mathtt{II}}$, we see that $s_{136}=0$ also holds on this locus. 
On the other hand, the unstable locus of the GIT quotient defining $\mathcal{T}_{4}$ contains $\{s_{123}=s_{124}=s_{125}=s_{135}=s_{136}=0\}$, so $s_{124}\neq0$ on the exceptional locus. 
Thus, we may work in the $s_{124}$-chart and set $s_{124}=1$. 

From the equations \eqref{eq:s124} of the $s_{124}$-chart, the coordinates 
$w_{2}, p_{1}, t_{1}, t_{2}, u_{1}, u_{2}, s_{135}, s_{136}$ can be eliminated, and $t_{124}$ can be eliminated using \eqref{eq:hypst}. 
The remaining coordinates are those listed in the statement, and hence the exceptional locus is a weighted projective space. 
Its weights are read from the matrix obtained by subtracting $(3d-1)$ times the second row from the first row of \eqref{eq:wtmatDp1}. 

The description of $\widetilde{\Pi}\to\mathring{\Pi}$ is obtained similarly. 
Both morphisms are small contractions, and by Proposition~\ref{prop:transition-fcn}, both $\widetilde{\Pi}$ and $\Pi_{1}$ have Picard number $2$. 
Their toric nature near the exceptional loci follows from the $s_{124}$-chart description and the weights of the coordinates. Therefore, $\Pi_{1}\dashrightarrow\widetilde{\Pi}$ is the flip of $\Pi_{1}\to\mathring{\Pi}$ with the type as stated.
\end{proof}

\subsubsection{\textbf{\textup{Type~Div}}}

By restricting the diagram \eqref{eq:ToricSarkisovDiv} to $\Pi_{i}\,(i\geq1)$, we obtain the following diagram:
\begin{equation}\label{eq:SarkisovDiv}
\xymatrix@C=15pt{
& \textcolor{gray}{\Pi_1}\ar@[gray][dl]\ar[dr]  && \Pi_2\ar[dl]\ar[dr]
&& \Pi_3\ar[dl]\ar[dr] && \textcolor{gray}{\Pi_4}\ar[dl]\ar@[gray][dr]\\
\textcolor{gray}{\overline{\Pi}_1} && \overline{\Pi}_2 && \overline{\Pi}_3
&& \overline{\Pi}_4 && \textcolor{gray}{\overline{\Pi}_5}.
}
\end{equation}

By examining the unstable loci of the GIT quotients and the exceptional loci via the equations of  $\Pi_{\mathtt{II}}$, we see that all the morphisms are isomorphisms except for $\Pi_{1}\to\overline{\Pi}_{1}$ and $\Pi_{4}\to\overline{\Pi}_{5}$, which are highlighted in gray in the diagram \eqref{eq:SarkisovDiv}. 
Thus, we may rewrite \eqref{eq:SarkisovDiv} as
\begin{equation}\label{eq:SarkisovDivkey}
\xymatrix{
& \widetilde{\Pi}\ar[dl]_{\widetilde{g}_\Pi}\ar[dr]^{\widetilde{f}_\Pi}\\
\overline{\Pi}_1 && \mathrm{w}\mP.
}
\end{equation}

\begin{prop}
\label{prop:Pi5WPS}
The variety $\mathrm{w}\mP$ is isomorphic to $\mP(1^{4},2^{4},3^{3},d-1,d^{2})$.
\end{prop}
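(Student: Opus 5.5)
We have to show that $\mathrm{w}\mP=\overline{\Pi}_{5}$, the image of $\Pi_{4}$ in the GIT quotient $\overline{\mathcal{T}}_{5}$, is the stated weighted projective space. The plan parallels the proof of Proposition~\ref{prop:transition-fcn}: the relevant GIT quotient is controlled by the last column of \eqref{eq:wtmatdiv}.

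The quotient $\overline{\mathcal{T}}_{5}$ corresponds to the ray spanned by the column of $s_{123}$. Sections of multiples of this character are the monomials of bidegree $m\cdot(3d+2,1)$. Hence $\overline{\mathcal{T}}_{5}$ is the weighted projective space whose coordinates are
\begin{itemize}
\item the quotients $s_{ijk}/s_{123}$, with weights read off from the first row after subtracting $(3d+2)$ times the second row;
\item $w_{2}s_{123}$;
\item the coordinates $*$, rescaled by $s_{123}$.
\end{itemize}
Only coordinates with nonnegative reduced weight survive. Since all $w(*)$ and $3d+2$ are positive, every reduced weight other than that of $s_{123}$ is positive or zero. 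The morphism $\Pi_{4}\to\overline{\Pi}_{5}$ is therefore described on the $s_{123}$-chart with $s_{123}=1$.

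The key step is to use the $s_{123}$-chart equations \eqref{eq:s123}. On $\Pi_{\mathtt{II}}\cap\{s_{123}\neq0\}$, these equations eliminate
\[
w_{2},\ p_{3},\ t_{1},\ t_{2},\ u_{1},\ u_{2},\ s_{245},\ s_{246},
\]
and $I=0$ eliminates one further $t_{ijk}$ (say $t_{123}$, whose coefficient $\widetilde{t}_{123}$ enters linearly). The remaining free coordinates form an open subset of affine space. The image $\overline{\Pi}_{5}$ is thus the weighted projective space on these remaining variables, with weights given by the reduced first row:
\begin{itemize}
\item $z$ of weight $1$ does not occur in Type~Div;
\item $s_{124},s_{125},s_{135}$ get weight $1$;
\item $s_{126},s_{136}$ get weight $2$;
\item $s_{245}$ would get weight $3$, but it is eliminated.
\end{itemize}
The remaining $p_{i}$, $t_{ijk}$ keep their Type~Div weights from \cite[Table~3]{Tak6}.

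The next step is bookkeeping, checking that the multiset of surviving weights is exactly $(1^{4},2^{4},3^{3},d-1,d^{2})$. Using the Type~Div weights, the surviving coordinates are $p_{1},p_{2},p_{4}$, $t_{124},t_{125},t_{126},t_{135},t_{136},t_{245}$ and the quotients $s_{124},s_{125},s_{126},s_{135},s_{136}$. Their count, $14$, matches the number of entries $4+4+3+1+2=14$. The weight-$1$ entries should be the three quotients $s_{124},s_{125},s_{135}$ together with one $p_{i}$. I expect $C\subset\mP(1,1,1)$ to lie in the plane with coordinates $s_{124},s_{125},s_{135}$.

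It remains to show that $\Pi_{4}\to\overline{\Pi}_{5}$ is surjective onto this whole weighted projective space, rather than onto a proper subvariety. Since the chart is an open subset of affine space in exactly these free coordinates, the map is dominant. Its image is closed because $\Pi_{4}$ is projective. Hence $\overline{\Pi}_{5}$ is the whole weighted projective space.

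I expect the main obstacle to be confirming that the quotient-ring description is correct at every point, not only generically. There are two concerns:
\begin{itemize}
\item the eliminated variables must be regular functions of the free ones after $s_{123}$ is inverted, which \eqref{eq:s123} shows;
\item no further relation survives among the free variables.
\end{itemize}
For the second concern I would check that the closure equations of $\Pi_{\mathtt{II}}$ (\cite[Sec.~5]{Tak9}) impose nothing beyond \eqref{eq:s123} and $I=0$ once $s_{123}=1$.
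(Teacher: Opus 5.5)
Your route is the paper's: on the $s_{123}$-chart you use \eqref{eq:s123} to eliminate $w_2,p_3,t_1,t_2,u_1,u_2,s_{245},s_{246}$ and $I=0$ to eliminate $t_{123}$. The remaining fourteen coordinates span a $13$-dimensional weighted projective space containing $\overline{\Pi}_5$, and equality follows because $\overline{\Pi}_5$ is closed of dimension $13$; your ``dominant with closed image'' is the paper's dimension count. Two points need correcting, however.

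First, $\overline{\mathcal{T}}_5$ is not the quotient at the ray of $s_{123}$. That ray bounds the effective cone. In a monomial of bidegree $m(3d+2,1)$ with $w_2$-exponent $a$, the $s_{ijk}$ contribute first weight at least $(3d+2)(m+a)$. This forces $a=0$, no $*$, and only $s_{123}$, so the quotient there is a point and your ``hence'' does not follow. By the convention after \eqref{eq:ToricSarkisovDiv}, $\overline{\mathcal{T}}_5$ is the quotient at the column $(3d+3,1)$ of $s_{124},s_{125},s_{135}$. Its coordinate ring is generated by $s_{124},s_{125},s_{135},s_{123}s_{126},s_{123}s_{136},s_{123}^2s_{245},s_{123}^3s_{246},s_{123}^{w(*)}\,*,s_{123}^{3d+3}w_2$, which is exactly \eqref{eq:divend}. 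Putting $s_{123}=1$ gives your coordinates and weights, so the rest of your argument survives. In these coordinates the chart relations become weighted-homogeneous polynomial identities (the paper multiplies by powers of $s_{123}$, as it does for $p_3$). Hence they hold on all of $\overline{\Pi}_5$, since $\Pi_4$ is irreducible. Together with $\dim\overline{\Pi}_5=13$, this settles your worry about further relations without inspecting the closure equations.

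Second, you leave the weight bookkeeping undone, and it is the actual content of the statement; your guess about it is also wrong. The fourth weight-$1$ coordinate is $t_{245}$, not a $p_i$. In Type~Div one has
\begin{itemize}
\item $w(p_2)=d-1$ and $w(p_1)=w(p_4)=d$;
\item $w(t_{245})=1$;
\item $w(t_{126})=w(t_{136})=2$;
\item $w(t_{124})=w(t_{125})=w(t_{135})=3$.
\end{itemize}
These follow from the homogeneity of $F_1$--$F_9$ with $w(s_1)=d+1$, or from the Type~Div tables in the appendix. Sorting the surviving coordinates by weight:
\begin{itemize}
\item weight $1$: $t_{245},s_{124},s_{125},s_{135}$;
\item weight $2$: $t_{126},t_{136},s_{126},s_{136}$;
\item weight $3$: $t_{124},t_{125},t_{135}$;
\item weights $d-1,d,d$: $p_2,p_1,p_4$.
\end{itemize}
This gives exactly $\mP(1^{4},2^{4},3^{3},d-1,d^{2})$.
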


\begin{proof}
We recall that the morphism $\widetilde{\Pi}\to w\mP$ coincides with $\Pi_{4}\to\overline{\Pi}_{5}$. 
Subtracting $(3d+2)$ times the second row from the first row of the matrix \eqref{eq:wtmatdiv}, we obtain the following matrix:
\begin{equation}\label{eq:wtmatdiv2}
\setlength{\arraycolsep}{4pt}
\renewcommand{\arraystretch}{1.05}
\begin{array}{@{} c *{8}{c} c @{}}
  & w_2 & * & s_{246} & s_{245}
  & \substack{s_{126}\\s_{136}}
  & \substack{s_{124}\\s_{125}\\s_{135}}
  & s_{123} & \\[2pt]
  \multirow{2}{*}{$\Bigl(\!$}
   & 3d+2  & w(*)  & 4  & 3  & 2  & 1  & 0
   & \multirow{2}{*}{$\!\Bigr),$} \\
   & -1 & 0 & 1  & 1  & 1  & 1  & 1 &
\end{array}
\end{equation}
where the part denoted by $w(*)$ is unchanged from \eqref{eq:wtmatdiv}, that is, it coincides with the weights of the original coordinates $*$ for $\Pi_{\mP}$. 
We can read off from \eqref{eq:wtmatdiv2} that the morphism $\Pi_{4}\to\overline{\Pi}_{5}$ is defined by
\begin{equation}\label{eq:divend}
\begin{aligned}
(w_{2},*,s_{246},s_{245},s_{136},s_{126},s_{124},s_{125},s_{135},s_{123})
&\mapsto\\
&(s_{123}^{3d+3}w_{2},\,s_{123}^{w(*)}*,\,s_{123}^{3}s_{246},\,s_{123}^{2}s_{245},\\
&\qquad s_{123}s_{136},\,s_{123}s_{126},\,s_{124},\,s_{125},\,s_{135}).
\end{aligned}
\end{equation}
Therefore, we see that the weights of the coordinates of the ambient weighted projective space of $\overline{\Pi}_{5}$ are given by the first row of the matrix \eqref{eq:wtmatdiv2}, excluding its rightmost entry. 
For the coordinates of this weighted projective space, we use the same notation as for the corresponding coordinates of $\Pi_{4}$. 
For example, the coordinate corresponding to $s_{123}^{3d+3}w_{2}$ is denoted simply by $w_{2}$. 

On the $s_{123}$-chart, after clearing denominators in \eqref{eq:s123}, we can eliminate the coordinates $w_{2}, p_{3}, t_{1}, t_{2}, u_{1}, u_{2}, s_{245}, s_{246}$ from the defining equations of the image $\overline{\Pi}_{5}$ of $\Pi_{4}$. 
Moreover, by using \eqref{eq:hypst}, we can also eliminate $t_{123}$. 
For example, multiplying both sides of the equation
\[
p_{3}=\tfrac{1}{s_{123}}\bigl(p_{1}s_{124}+p_{2}s_{126}-p_{4}s_{125}\bigr)
\]
in \eqref{eq:s123} by $s_{123}^{d+1}$, we obtain
\begin{align*}
s_{123}^{d+1}p_{3}
&=s_{123}^{d}\bigl(p_{1}s_{124}+p_{2}s_{126}-p_{4}s_{125}\bigr)\\
&=(s_{123}^{d}p_{1})s_{124}+(s_{123}^{d-1}p_{2})(s_{123}s_{126})-(s_{123}^{d}p_{4})s_{125},
\end{align*}
and hence the coordinate $p_{3}$ can be eliminated for the ambient weighted projective space $\overline{\Pi}_{5}$. 

Although this argument is carried out on the $s_{123}$-chart, it holds globally since $\Pi_{4}$ is irreducible. 
Therefore, we may regard $\overline{\Pi}_{5}$ as being contained in the weighted projective space whose coordinates are
\[
p_{1}, p_{2}, p_{4}, t_{124}, t_{125}, t_{126}, t_{135}, t_{136}, t_{245}, s_{124}, s_{125}, s_{126}, s_{135}, s_{136}.
\]
This weighted projective space, however, has the same dimension $13$ as $\overline{\Pi}_{5}$, and hence it coincides with $\overline{\Pi}_{5}$. 
Arranging the weights of these coordinates, we obtain exactly those given in the statement of the proposition.
\end{proof}

\begin{prop}
\label{prop:EndDiv}
The morphism $\,\widetilde{\Pi}\to w\mP$ is the contraction that contracts the divisor $\{s_{123}=0\}$ to the curve
\[
C:=\{s_{124}s_{135}=s_{125}^{2}\}\subset\mP(s_{124},s_{135},s_{125})\simeq\mP^{2}
\]
in $\,w\mP\simeq\mP(1^{4},2^{4},3^{3},d-1,d^{2})$. 
The exceptional divisor is a locally trivial $\,\mP(1^{2},2^{3},3^{3},4,d-1,d,d+1)$-bundle over this curve. 
In particular, $\,\widetilde{\Pi}$ has Picard number $\,2$.
\end{prop}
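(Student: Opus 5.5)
We work with the explicit description \eqref{eq:divend} of $\widetilde{\Pi}=\Pi_{4}\to\overline{\Pi}_{5}=\mathrm{w}\mP$ obtained in the proof of Proposition~\ref{prop:Pi5WPS}, together with the toric description of exceptional loci in VGIT (\cite[Lem.~4.5]{BZ}, \cite[Lem.~15.3.11]{CoLS}). In the toric picture the contraction $\mathcal{T}_{4}\to\overline{\mathcal{T}}_{5}$ corresponds to crossing the last column of \eqref{eq:wtmatdiv2}, where only $s_{123}$ has first-row weight $0$. Hence its exceptional locus is the divisor $\{s_{123}=0\}$. The image of this divisor is the locus where all coordinates with positive second-row weight except $s_{124},s_{125},s_{135}$ vanish, namely the plane $\mP(s_{124},s_{125},s_{135})\simeq\mP^{2}$. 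Restricting to $\widetilde{\Pi}$, the exceptional locus is $E:=\widetilde{\Pi}\cap\{s_{123}=0\}$, which is a prime divisor since $\widetilde{\Pi}$ is irreducible and $s_{123}$ is a coordinate on its charts. The first step is to compute $\widetilde{f}_{\Pi}(E)$. Setting $s_{123}=0$ in \eqref{eq:divend} shows that every coordinate except $s_{124},s_{125},s_{135}$ is multiplied by a positive power of $s_{123}$, so the image lies in $\mP(s_{124},s_{135},s_{125})$. The equation of $\Pi_{\mathtt{II}}$ that becomes $s_{124}s_{135}=s_{125}^{2}$ at $s_{123}=0$ is visible from \eqref{eq:s123}: clearing denominators in the formula for $t_{1}$ gives $s_{123}^{2}t_{1}=-s_{125}^{2}+s_{124}s_{135}-s_{123}s_{136}$. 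Conversely, every point of the conic is attained, for example using the $s_{135}$-chart \eqref{eq:s135} with $s_{124}=s_{125}^{2}/s_{135}$ when $s_{123}=0$. Thus $\widetilde{f}_{\Pi}(E)=C$.

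The second step is to identify the fibers of $E\to C$ and prove local triviality. By Lemma~\ref{lem:sijkNull}, and because $s_{123}=0$ on $E$, $E$ is covered by the $s_{124}$-, $s_{135}$- and $s_{246}$-charts. Moreover, $s_{124}$ and $s_{135}$ cannot both vanish on $C$, so over $C$ it suffices to use the $s_{124}$- and $s_{135}$-charts. Over $\{s_{135}\ne0\}\cap C$ we normalize $s_{135}=1$, use $s_{125}$ as the local coordinate on $C$, and then \eqref{eq:s135} together with $I=0$ eliminates $w_{2},p_{3},p_{4},u_{2},s_{124},s_{126},s_{245},s_{246},t_{135}$. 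The remaining coordinates of $E$ are
\[
z\text{-free list: } p_{1},p_{2},u_{1},t_{1},t_{2},t_{123},t_{124},t_{125},t_{126},t_{136},t_{245},s_{136},
\]
together with $w_{2}$ eliminated and $s_{125}$ as the base coordinate. Their weights come from \eqref{eq:wtmatdiv2}, after the shift so that $s_{135}$ has weight $0$. Reading these weights off should give $\mP(1^{2},2^{3},3^{3},4,d-1,d,d+1)$ as the fiber, and this must be checked against the table of $w(*)$ from \cite{Tak6}. The same computation is carried out on the $s_{124}$-chart using \eqref{eq:s124}, and the transition on the overlap is read off from the two charts, exactly as in Proposition~\ref{prop:transition-fcn}. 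This yields local triviality over $C\simeq\mP^{1}$.

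The third step is to conclude that $\widetilde{\Pi}$ has Picard number $2$. Since $\mathrm{w}\mP$ has Picard number $1$, and $\widetilde{f}_{\Pi}$ is a birational morphism with irreducible exceptional divisor $E$ from a $\mQ$-factorial variety, we get $\rho(\widetilde{\Pi})=\rho(\mathrm{w}\mP)+1=2$. Here $\mQ$-factoriality follows from Lemma~\ref{lem:=00306F=009AD8=003005=005546=007279=007570=0070B9=003057=00304B=006301=00305F=00306A=003044=003002}. Alternatively, $\widetilde{\Pi}$ is isomorphic in codimension one to $\widehat{\Pi}$ via \eqref{eq:SarkisovDivkey}, so it inherits Picard number $2$.

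The main obstacle is the bookkeeping in the second step: verifying that the fiber is exactly $14$-dimensional, that the eliminations are valid along all of $E$ including the $s_{246}$-chart points lying over $C$, and that the weights come out to the stated list.
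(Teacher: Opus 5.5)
Your proposal follows essentially the same route as the paper: toric VGIT identifies $\{s_{123}=0\}$ as the exceptional divisor, a relation of $\Pi_{\mathtt{II}}$ restricted to $s_{123}=0$ yields the conic $C$ (your globally valid relation $s_{123}^{2}t_{1}=-s_{125}^{2}+s_{124}s_{135}-s_{123}s_{136}$ is an equivalent substitute for the paper's $s_{124}$-chart equation), and the weights of \eqref{eq:wtmatdiv3} on the $s_{124}$- and $s_{135}$-charts give the bundle structure. There are a few slips. $\overline{\mathcal{T}}_{5}$ corresponds to the column of $s_{124},s_{125},s_{135}$, not the last column. The fiber has $12$ homogeneous coordinates and is $11$-dimensional, not $14$. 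Your alternative Picard-number argument via $\widehat{\Pi}$ would be circular, since Proposition~\ref{prop:flop} uses the present proposition to show that $\widetilde{g}_{\Pi}$ is small in Type~Div, so keep your primary argument $\rho(\widetilde{\Pi})=\rho(\mathrm{w}\mP)+1$.
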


\begin{proof}
We study $\widetilde{\Pi}\to w\mP$ as $\Pi_{4}\to\overline{\Pi}_{5}$. 
It is the divisorial contraction whose exceptional divisor is $\{s_{123}=0\}$ due to the general theory of toric Sarkisov links.

If a point of the exceptional divisor satisfies $s_{124}=s_{135}=0$, then we see from the defining equations of $\Pi_{\mathtt{II}}$ that it also satisfies $s_{125}=0$. 
However, the corresponding point of $\Pi_{\mathtt{II}}$ is an unstable point for the GIT quotient defining $\Pi_{4}$. 
Therefore, every point on the exceptional divisor satisfies either $s_{124}\neq0$ or $s_{135}\neq0$, and hence it can be studied on the $s_{124}$-chart and the $s_{135}$-chart.

To determine the image of the divisor $\{s_{123}=0\}$, we work on the $s_{124}$-chart. Using \eqref{eq:divend} and setting $s_{123}=0$ in the equation
\[
s_{135}=\tfrac{1}{s_{124}^{2}}\bigl(s_{124}s_{125}^{2}+s_{123}s_{125}s_{126}+s_{123}^{2}s_{245}\bigr)
\]
appearing in \eqref{eq:s124}, we obtain exactly the description of the image of the divisor $\{s_{123}=0\}$ given in the statement of the proposition.

The structure of the exceptional divisor can be studied by using \eqref{eq:s135} and \eqref{eq:s124}. 
Subtracting $(3d+3)$ times the second row from the first row of the matrix \eqref{eq:wtmatdiv}, we obtain the following matrix:
\begin{equation}\label{eq:wtmatdiv3}
\setlength{\arraycolsep}{4pt}
\renewcommand{\arraystretch}{1.05}
\begin{array}{@{} c *{8}{c} c @{}}
  & w_2 & * & s_{246} & s_{245}
  & \substack{s_{126}\\s_{136}}
  & \substack{s_{124}\\s_{125}\\s_{135}}
  & s_{123} & \\[2pt]
  \multirow{2}{*}{$\Bigl(\!$}
   & 3d+3  & w(*)  & 3  & 2  & 1  & 0  & -1
   & \multirow{2}{*}{$\!\Bigr).$} \\
   & -1 & 0 & 1  & 1  & 1  & 1  & 1 &
\end{array}
\end{equation}
From this, we read off, in the same way as in Proposition~\ref{prop:transition-fcn}, that the exceptional divisor has the structure of a locally trivial $\mP(1^{2},2^{3},3^{3},4,d-1,d,d+1)$-bundle over the curve $C$.
\end{proof}

\subsection{Description of $\widetilde{g}_{\Pi}\colon\Pi_{1}\to\overline{\Pi}_{1}$}\label{subsec:Descriptions-of-flop}
Recall that $\widetilde{g}_{\Pi}\colon \Pi_{1}\to \overline{\Pi}_{1}$ is the morphism appearing in the diagrams \eqref{eq:SarkisovDelkey} and \eqref{eq:SarkisovDivkey}.

\begin{prop}
\label{prop:flop}It holds that 

\begin{enumerate}[$(1)$]

\item $\widetilde{g}_{\Pi}\colon\Pi_{1}\to\overline{\Pi}_{1}$ is
a crepant small contraction, and

\item both of the relative Picard numbers of $\ \widehat{g}_{\Pi}$
and $\,\widetilde{g}_{\Pi}$ are $1$, and $\, \widehat{\Pi}\dashrightarrow\Pi_{1}$
(resp.~$\, \widehat{\Pi}\dashrightarrow\widetilde{\Pi}$) is the flop
with respect to $-E_{\Pi}$ in the case of$\,$  {\rm{Type~Dp}} (resp.~{\rm{Type~Div}}). 

\end{enumerate}
\end{prop}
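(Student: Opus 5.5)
The plan is to treat $\widetilde{g}_{\Pi}$ the way Proposition~\ref{prop:widehat=00007BPi=00007D bir} treats $\widehat{g}_{\Pi}$, and then combine the two descriptions.

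\emph{Step 1: birationality and crepancy.} By construction (see \eqref{eq:Pi1'} and \eqref{eq:T1T1'}), $\widetilde{g}_{\Pi}$ is the restriction of $\mathcal{T}_{1}\to\overline{\mathcal{T}}_{1}'$. It sends a point to $(*,w_{2}s_{ijk})$ and is an isomorphism over $\{w_{2}\neq0\}$. Every point of $\overline{\Pi}_{1}\setminus E_{\overline{\Pi}}$ has a unique preimage, and $\Pi_{1}$ is irreducible, being the closure of the $s_{135}$-chart. Hence $\widetilde{g}_{\Pi}$ is birational, and its exceptional locus lies in $\Pi_{1}\cap\{w_{2}=0\}$, which maps into $E_{\overline{\Pi}}$.

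For crepancy, I compare canonical classes in the toric GIT. $K_{\mathcal{T}_{1}}$ is minus the sum of the weight columns of \eqref{eq:wtmatDp1} (resp.~\eqref{eq:wtmatdiv}). $\Pi_{1}$ is cut out in the charts by $I=0$ and the equations $w_{2}s_{ijk}=D_{ijk}$; as Lemma~\ref{lem:=00306F=009AD8=003005=005546=007279=007570=0070B9=003057=00304B=006301=00305F=00306A=003044=003002} shows, these amount to eliminating variables. Adjunction then gives a class on $\Pi_{1}$ whose second-row component should be $0$, i.e.\ the divisor $\{w_{2}=0\}$ should have discrepancy $0$ over $\overline{\Pi}_{1}$. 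This is a bookkeeping check: the second row contributes $-1$ from $w_{2}$ and $+1$ from each of the eight $s_{ijk}$, and is corrected by the relations. Alternatively, I can observe that $K_{\Pi_{1}}$ is the pull-back of $K_{\overline{\Pi}_{1}}$ because both are proportional to the pull-back of $\mathcal{O}(1)$ from $\overline{\mathcal{T}}_{1}$.

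\emph{Step 2: smallness.} This is the step I expect to be the main obstacle. The exceptional locus is $\Pi_{1}\cap\{w_{2}=0\}$. In each of the four charts \eqref{eq:s124}, \eqref{eq:s135}, \eqref{eq:s123}, \eqref{eq:s246}, the equation $w_{2}=D_{ijk}/s_{ijk}$ lets me write $\{w_{2}=0\}$ as $\{D_{ijk}=0\}$ in the remaining free coordinates. A priori this is a divisor, so it has to be shown that it is not contracted, i.e.\ that its image in $E_{\overline{\Pi}}$ has the same dimension.

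My first approach is to show that the divisor $\{w_{2}=0\}\cap\Pi_{1}$ maps onto $E_{\overline{\Pi}}$ generically finitely. Then the exceptional set is the locus of positive-dimensional fibers, which is a proper closed subset of that divisor and hence of codimension $\ge2$. I would do this via dimensions: $E_{\overline{\Pi}}$ has codimension $1$ in $\overline{\Pi}_{1}$ by Lemma~\ref{lem:The-image-of=003000excep} and Lemma~\ref{lem:HypNormal}, so $\{w_{2}=0\}$ and $E_{\overline{\Pi}}$ have equal dimension. By elimination in the $s_{135}$-chart, the fiber over a general point of $E_{\overline{\Pi}}$ is then finite, namely the point $(s_{ijk})$ proportional to the limiting direction.

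The fallback approach mirrors the proof of Proposition~\ref{prop:widehat=00007BPi=00007D bir}. Since $\Pi_{1}$ has Picard number $2$, by Proposition~\ref{prop:transition-fcn}/\ref{prop:flipping} (resp.\ Proposition~\ref{prop:EndDiv}), the relative Picard number is $1$. A divisorial $\widetilde{g}_{\Pi}$ would therefore contract a prime divisor onto a codimension-$2$ component of $\mathrm{Sing}\,\overline{\Pi}_{1}$, contradicting the codimension-$3$ computation already done there.

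\emph{Step 3: relative Picard numbers and the flop.} Both $\widehat{\Pi}$ and $\Pi_{1}$ (resp.\ $\widetilde{\Pi}$) are $\mQ$-factorial of Picard number $2$, and both contract to $\overline{\Pi}_{1}$ of Picard number $1$; hence $\rho(\widehat{g}_{\Pi})=\rho(\widetilde{g}_{\Pi})=1$. The induced map $\widehat{\Pi}\dashrightarrow\Pi_{1}$ is an isomorphism in codimension $1$ over $\overline{\Pi}_{1}$, and both contractions are small and crepant.

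It remains to check that $E_{\Pi}$, the strict transform of the $\widehat{f}_{\Pi}$-exceptional divisor $\{w_{1}=0\}$, changes sign. $-E_{\Pi}$ is $\widehat{g}_{\Pi}$-ample, since $\widehat{g}_{\Pi}$-exceptional curves lie in the $w_{1}=0$ chamber boundary with positive second weight. On $\Pi_{1}$, $E_{\Pi}$ is the divisor $\{w_{2}=0\}$ up to the ample class, and it is $\widetilde{g}_{\Pi}$-ample because $w_{2}$ has second-row weight $-1$, the opposite sign to the $s_{ijk}$. So $\Pi_{1}$ is the $(-E_{\Pi})$-flop, that is, $\operatorname{Proj}\bigoplus_{m}\widehat{g}_{\Pi*}\mathcal{O}(mE_{\Pi})$. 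In Type Div the same argument applies with $\Pi_{1}=\widetilde{\Pi}$, by \eqref{eq:SarkisovDivkey}.
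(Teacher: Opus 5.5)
\textbf{The gap is in Step~2, and it undermines Step~1 as well.}

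Your first approach at most shows, modulo an elimination you have not performed, that the component of $\{w_{2}=0\}\cap\Pi_{1}$ dominating $E_{\overline{\Pi}}$ is not contracted. Smallness needs that \emph{no} component of this divisor is contracted. In effect you need $\{w_{2}=0\}\cap\Pi_{1}$ to be irreducible, including over $\{s_{135}=0\}$, and you never address this.

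The fallback borrows the codimension-$3$ computation made for $\widehat{g}_{\Pi}$. That argument needs three inputs:
\begin{enumerate}
\item the image of the contracted divisor lies in $\mathrm{Sing}\,\overline{\Pi}_{1}$, which came from crepancy;
\item this image has codimension $2$, which came from finiteness of $E_{\widehat{\Pi}}\to E_{\overline{\Pi}}$, and nothing analogous is known for $\widetilde{g}_{\Pi}$;
\item the image is not contained in $\{p_{4}=0\}$.
\end{enumerate}
None of these is available to you.

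Your crepancy argument cannot supply the first input. $\Pi_{1}$ is not a complete intersection of $I=0$ and $w_{2}s_{ijk}=D_{ijk}$: these equations cut out $\Pi_{1}'$, which has the extra, larger component $\{I=w_{2}=D_{ijk}=0\}$. So adjunction does not apply; the naive count, $-1+8$ from the coordinates minus $1$ from $I$, even gives a nonzero second-row component. Your ``alternative'' assumes what is to be proved.

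\textbf{The missing idea is short.} $\overline{\Pi}_{1}$ is not $\mQ$-factorial, because $\widehat{g}_{\Pi}$ is a nontrivial small contraction of the $\mQ$-factorial $\widehat{\Pi}$; otherwise an ample $H$ would equal $\widehat{g}_{\Pi}^{*}\widehat{g}_{\Pi*}H$, which is $\widehat{g}_{\Pi}$-trivial. Since $\Pi_{1}$ is $\mQ$-factorial with $\rho(\Pi_{1}/\overline{\Pi}_{1})=1$, a divisorial $\widetilde{g}_{\Pi}$ would make $\overline{\Pi}_{1}$ $\mQ$-factorial. (The strict transform of a Weil divisor plus a suitable multiple of the exceptional divisor is relatively trivial, hence a pull-back.) So $\widetilde{g}_{\Pi}$ is small. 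Only then is crepancy automatic: $\overline{\Pi}_{1}$ is a normal hypersurface, hence Gorenstein, and $K_{\Pi_{1}}$ and $\widetilde{g}_{\Pi}^{*}K_{\overline{\Pi}_{1}}$ agree outside codimension $2$.

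\textbf{In Step~3 both of your signs are reversed.}

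$E_{\widehat{\Pi}}$ is $\widehat{g}_{\Pi}$-\emph{ample}. Indeed, $-E_{\widehat{\Pi}}$ is $\widehat{f}_{\Pi}$-ample, $\rho(\widehat{\Pi})=2$, and a nonzero effective divisor on a projective variety cannot be anti-nef. Equivalently, $w_{1}$ has weight $(0,-1)$ and the ample chamber of $\mathcal{T}_{0}$ has negative second coordinate.

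Likewise, $w_{2}$ has weight $(0,-1)$ while the ample chamber of $\mathcal{T}_{1}$ has positive second coordinate. So $\{w_{2}=0\}$, and hence $E_{\Pi_{1}}$, is $\widetilde{g}_{\Pi}$-\emph{anti}-ample.

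The sign change survives, but your formula $\operatorname{Proj}\bigoplus_{m}\widehat{g}_{\Pi*}\mathcal{O}(mE_{\Pi})$ just returns $\widehat{\Pi}$. The flop is $\operatorname{Proj}\bigoplus_{m}\widehat{g}_{\Pi*}\mathcal{O}(-mE_{\Pi})$. Also, identifying $E_{\Pi_{1}}$ with $\{w_{2}=0\}$ up to relatively trivial classes presupposes the smallness and irreducibility discussed above.

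The paper avoids all weight bookkeeping here. By $\rho(\Pi_{1}/\overline{\Pi}_{1})=1$, one of $\pm E_{\Pi_{1}}$ is $\widetilde{g}_{\Pi}$-ample. If $E_{\Pi_{1}}$ were, then $\Pi_{1}\simeq\widehat{\Pi}$ over $\overline{\Pi}_{1}$. This is impossible, since $\widehat{\Pi}$ carries $\widehat{f}_{\Pi}$ while $\Pi_{1}$ carries the flipping contraction (resp.\ the contraction to $\mathrm{w}\mP$).
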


\begin{proof}
(1) By the discussion in the beginning of Subsection \ref{subsec:Constructing-strict-transforms Pi1}, we see that $\widetilde{g}_{\Pi}$ is surjective and birational. 
Moreover, by Proposition~\ref{prop:widehat=00007BPi=00007D bir}, $\overline{\Pi}_{1}$ is not $\mQ$-factorial.

In the case of Type~Dp, Propositions~\ref{prop:transition-fcn} and \ref{prop:flipping} show that the Picard number of $\Pi_{1}$ is $2$. 
Hence the relative Picard number of $\widetilde{g}_{\Pi}$ is $1$. 
Therefore, if $\widetilde{g}_{\Pi}$ were to contract a divisor, then $\overline{\Pi}_{1}$ would be $\mQ$-factorial, which is a contradiction. 
Thus $\widetilde{g}_{\Pi}$ is also a small contraction. 
Moreover, since $\overline{\Pi}_{1}$ is a hypersurface by Proposition \refeq {prop:widehat=00007BPi=00007D bir} and hence $\mQ$-Gorenstein, it follows that $\widehat{g}_{\Pi}$ is crepant.

In the case of Type~Div, by Proposition~\ref{eq:divend}, the Picard number of $\widetilde{\Pi}$ is $2$. 
The rest of the argument is the same as in the Type~Dp case.

\vspace{3pt}

$\noindent$(2) Since the proof is the same for both Type~Dp and Type~Div, we only argue in the case of Type~Dp. 
Note that the divisor $-E_{\widehat{\Pi}}$ is relatively ample with respect to $\widehat{f}_{\Pi}\colon \widehat{\Pi}\to \Pi_{\mP}$. 
Since $E_{\widehat{\Pi}}$ is effective and the Picard number of $\widehat{\Pi}$ is $2$, it follows that $E_{\widehat{\Pi}}$ is $\widehat{g}_{\Pi}$-ample. 
In particular, $E_{\overline{\Pi}}$ is not $\mQ$-Cartier. 
Moreover, by Lemma~\ref{lem:=00306F=009AD8=003005=005546=007279=007570=0070B9=003057=00304B=006301=00305F=00306A=003044=003002}, $\Pi_{1}$ is $\mQ$-factorial. 
Since the Picard number of $\Pi_{1}$ is $2$ by Proposition~\ref{prop:transition-fcn}, the strict transform $E_{\Pi_{1}}$ of $E_{\widehat{\Pi}}$ on $\Pi_{1}$, or its negative $-E_{\Pi_{1}}$, is $\widetilde{g}_{\Pi}$-ample. 
However, if $E_{\Pi_{1}}$ were $\widetilde{g}_{\Pi}$-ample, then $\widehat{\Pi}$ and $\Pi_{1}$ would be isomorphic, which is impossible since they admit different contractions (from $\widehat{\Pi}$ we have $\widehat{g}_{\Pi}$ and the weighted blow-up $\widehat{f}_{\Pi}\colon \widehat{\Pi}\to \Pi_{\mP}$, whereas from $\Pi_{1}$ we have $\widetilde{g}_{\Pi}$ and the flipping contraction; see Proposition~\ref{prop:flipping}). 
Therefore, $-E_{\Pi_{1}}$ is $\widetilde{g}_{\Pi}$-ample, and $\Pi_{0}\dashrightarrow \Pi_{1}$ is the flop with respect to $-E_{\Pi}$.
\end{proof}

\begin{proof}[\textbf{Proof of Theorem \ref{thm:keySarkisov}}]
In the case of Type~Dp, by gluing the diagrams \eqref{eq:firstres} and \eqref{eq:SarkisovDelkey}, and in the case of Type~Div, by gluing the diagrams \eqref{eq:firstres} and \eqref{eq:SarkisovDivkey}, and then combining the descriptions of these diagrams obtained above, we obtain the Sarkisov link as stated in Theorem \ref{thm:keySarkisov}.

It remains to prove that $\Pi_{\mP}$ has only terminal singularities. Since $\widehat{f}_{\Pi}$ is a $K$-negative divisorial contraction, it suffices to show that $\widehat{\Pi}$ has only terminal singularities. We argue separately for Type~Dp and Type~Div; however, since the proof proceeds in the same way in both cases, we treat only the case of Type~Dp. By Proposition \ref{prop:flop} and \cite{Ko}, it suffices to show that $\Pi_1$ has only terminal singularities. By Lemma \ref{lem:sijkNull}, it is enough to apply the Reid--Tai criterion (\cite[Thm.~4.11]{R}) on the $s_{124}$-, $s_{135}$-, $s_{123}$-, and $s_{246}$-charts.
\end{proof}

\section{\textbf{Sarkisov links for prime $\mQ$-Fano $3$-folds--Proof of Theorem~\ref{thm:FanoSarkisov}--}}
\label{sec:Proof-of-Theorem 3}

\subsection{$X$ is a prime $\mQ$-Fano $3$-fold}\label{subsection:prime}

In \cite{Tak6}, it was shown that, in each case for Type~Dp and Type~Div, a general weighted complete intersection in $\Pi_{\mP}$ is a prime $\mQ$-Fano $3$-fold. 
However, since the condition $(\star)$ in Theorem~\ref{thm:FanoSarkisov} is a Zariski closed condition, we must first verify that $X$ is a prime $\mQ$-Fano $3$-fold.

In the cases Nos.~501 and 512, the condition $(\star)$ is automatically satisfied for a general choice of sections to cut out $X$ from $\Pi_\mP$ due to the weights. 
Hence, by \cite[Thm.~1.2]{Tak6}, $X$ is a prime $\mQ$-Fano $3$-fold.

In the cases Nos.~550, 577, and 878, the defining equations of general sections  do not involve $s_{2}$ or $s_{3}$ due to the weights, but may involve $s_{1}$ (see \cite[Sec.~4]{Tak6}). 
However, using the automorphisms of $\Pi_{\mP}$ given in Proposition~\ref{prop:autom}, we can transform such an $X$ into one satisfying the condition $(\star)$. 
An explicit description of how this is done is given in Subsection~\ref{subsec:Explicite-equations-of=003000X} in each case. 
Therefore, also in these cases, $X$ is a prime $\mQ$-Fano $3$-fold by \cite[Thm.~1.2]{Tak6}.

In the remaining cases Nos.~872 and 1766, the defining equations of general sections do not involve $s_{3}$ due to the weights, but may involve either $s_{1}$ or $s_{2}$ (see \cite[Sec.~4]{Tak6}). 
Unfortunately, in these cases, no suitable automorphism of $\Pi_{\mP}$ is available so far, hence we must directly show that a general $X$ satisfying the condition $(\star)$ is a prime $\mQ$-Fano $3$-fold. 
Nevertheless, since the proof proceeds in the same way as in \cite{Tak6} and \cite{Tak7}, we omit it.

\vspace{5pt}

For the remaining parts of the proof, although the computations are elementary, some of them are difficult to carry out by hand. 
We therefore refer to \cite{Tak9} for the computational details and only present an outline of the proof in the sequel. 
Since the arguments for Type~Dp and Type~Div are similar in each case, \cite[Sec.~7 and 8]{Tak9} contains detailed computations only in the cases No.~501 (Type~Dp) and No.~577 (Type~Div).

\subsection{Description of $\widehat{f}$}

By the condition $(\star)$, we see that $X$ contains the $s_{1}$-point of $\Pi_{\mP}$. 
By intersecting the singularity \eqref{eq:s1-pt} of $\Pi_{\mP}$ at the $s_{1}$-point with sections as in \eqref{eq:TypeDp} and \eqref{eq:TypeDiv}, we expect numerically that $X$ has a $\nicefrac{1}{d+1}(1,1,d)$-singularity at the $s_{1}$-point. 
Using the explicit defining equations of $X$ given in Subsection~\ref{subsec:Explicite-equations-of=003000X}, we can verify that this is indeed the case. 
In the same way, we see that $\widehat{f}$ is the weighted blow-up with weights $\nicefrac{1}{d+1}(1,1,d)$. 
In particular, $\widehat{X}$ is $\mQ$-factorial and has Picard number $2$. 
The restriction $\widehat{X}\cap\{w_{1}=0\}$ of $E_{\widehat{\Pi}}$ to $\widehat{X}$ is the exceptional divisor of $\widehat{f}$, which we denote by $E_{\widehat{X}}$.

\subsection{Sarkisov link}

In the case of Type~Dp, let $X_{1}$ and $\widetilde{X}$ be the subvarieties of $\Pi_{1}$ and $\widetilde{\Pi}$, respectively, cut out by the sections defined by the same equations as those cutting out $X$ from $\Pi_{\mP}^{14}$ in Subsection~\ref{subsec:Explicite-equations-of=003000X}.
Thanks to the condition $(\star)$, this construction is well defined. 
By examining $X_{1}$ and $\widetilde{X}$ explicitly, we see that they are the strict transforms of $X$. 
Let $\overline{X}_{1}$ and $\mathring{X}$ be the images of $X_{1}$ under $\Pi_{1}\to\overline{\Pi}_{1}$ and $\Pi_{1}\to\mathring{\Pi}$, respectively. 
In this way, we obtain the restriction of the diagram obtained by concatenating \eqref{eq:firstres} and \eqref{eq:SarkisovDelkey}. 
We will show below that this is indeed the desired Sarkisov link. 
In the case of Type~Div, by performing the same construction for the diagrams \eqref{eq:firstres} and \eqref{eq:SarkisovDivkey}, we obtain the corresponding restricted diagram, which we will again show to be the desired Sarkisov link.

\subsection{Flop}

In the case of Type~Dp, we show that $\widehat{X}\to\overline{X}_{1}$ and $X_{1}\to\overline{X}_{1}$ are crepant small contractions, and that $\widehat{X}\dashrightarrow X_{1}$ is the $-E_{\widehat{X}}$-flop, as follows. 

Let $S$ be the anticanonical $K3$ surface of $X$. 
Note that, since $h^{0}(-K_{X})=1$ in all cases, $S$ is uniquely determined. 
Let $\sigma$ be one of the sections cutting out $S$ from $\Pi_{\mP}$, and let $\widehat{\sigma}$ be the corresponding section of $\widehat{\Pi}$ defined by the same equation. 
By the definition of the weighted blow-up $\widehat{f}_{\Pi}$, $\widehat{\sigma}$ is the strict transform of $\sigma$. 
Let $\widehat{S}$ be the subscheme of $\widehat{\Pi}$ cut out by the sections $\widehat{\sigma}$. 
Then the canonical divisor of $\widehat{S}$ is trivial. 
By examining the morphism $\widehat{S}\to S$ explicitly, we see that it is the weighted blow-up of type $\nicefrac{1}{d+1}(1,d)$. 
Hence $\widehat{S}$ is also an anticanonical $K3$ surface. 

Let $\overline{\sigma}$ be the section of $\overline{\Pi}_{1}$ defined by the same equation as $\sigma$, and let $\overline{S}_{1}$ be the scheme cut out from $\overline{\Pi}_{1}$ by these sections. 
Since $\widehat{\sigma}$ is the total transform of $\overline{\sigma}$ for each $\sigma$, the morphism $\widehat{g}_{\Pi}$ induces a surjective morphism $\widehat{S}\to\overline{S}_{1}$. 
By examining $\overline{S}_{1}$ explicitly via the Jacobian criterion, we see that its affine cone is smooth. 
Hence the affine cone of $\overline{S}_{1}$ is disjoint from the singular locus of the affine hypersurface $\{F_{\overline{\Pi}}=0\}$. 

If $\widehat{X}\to\overline{X}_{1}$ were a divisorial contraction, then the image of its exceptional divisor would be a curve since all its fibers are $1$-dimensional by Proposition~\ref{prop:widehat=00007BPi=00007D bir}. The curve would intersect $\overline{S}_{1}$ since $\overline{S}_{1}$ is ample. Since the curve is contained in the image of the exceptional locus of the small contraction $\widehat{g}_\Pi\colon \widehat{\Pi}\to \overline{\Pi}_1$, the singularities of $\overline{X}_1$ along the the curve is not produced by the finite group action.  
Hence the affine cone of $\overline{S}_{1}$ would intersect the singular locus of the affine hypersurface $\{F_{\overline{\Pi}}=0\}$, a contradiction. 
Therefore, $\widehat{X}\to\overline{X}_{1}$ is a crepant small contraction. 
In particular, $\overline{X}_{1}$ has only terminal singularities. 
It follows that $X_{1}\to\overline{X}_{1}$ is also a crepant small contraction. 
Since the Picard number of $\widehat{X}$ is $2$, and both $\widehat{X}\to\overline{X}_{1}$ and $X_{1}\to\overline{X}_{1}$ are crepant small contractions, the Picard number of $X_{1}$ is also $2$. 
The proof is then completed in the same way as in Proposition~\ref{prop:flop}.

In the case of Type~Div, the same statements hold for $\widehat{X}\to\overline{X}_{1}$ and $\widetilde{X}\to\overline{X}_{1}$, and can be proved in the same way.

\vspace{5pt}

The remaining proof of Theorem~\ref{thm:FanoSarkisov} is carried out separately for Type~Dp and Type~Div.

\subsection{Type~Dp}

\subsubsection{\textbf{Flip}}

As in the proof of Theorem~\ref{thm:keySarkisov}, it suffices to work on the $s_{124}$-chart. 
By cutting the toric flip described in Theorem~\ref{thm:keySarkisov} by the sections corresponding to \eqref{eq:TypeDp}, we expect numerically that it becomes a hypersurface flip of type $(d,1^{2},-2,-1;d-2)$. 
Using the explicit defining equations of $X$ given in Subsection~\ref{subsec:Explicite-equations-of=003000X}, we verify that this is indeed the case. 
It is also easy to see from the explicit description that the exceptional locus of $X_{1}\to\mathring{X}$ is as described in Theorem~\ref{thm:FanoSarkisov}. 

The only slightly delicate point is to show that the Gorenstein singularity on the exceptional locus of $\widetilde{X}\to\mathring{X}$ is an ordinary $cA_{d-3}$ singularity (it is easy to see that there are one Gorenstein  singularity and one $\nicefrac{1}{2}(1,1,1)$-singularity on this locus). 
We briefly explain this point.

It is straightforward to locate the Gorenstein singularities by the Jacobian criterion. 
Near such a point, $X_{1}$ is a hypersurface in $\mC^{4}$ with coordinates $s_{123},p_{3},s_{126},z$. 
To translate the singular point to the origin, it is necessary to change only the coordinate $s_{123}$; denote the new coordinate by $G_{123}$. For No. 872, since the rank of the quadratic part of this hypersurface at the origin is seen to be 4, the singularity in question is an ordinary double point as desired. 
In the other cases, the lowest-degree term of the defining equation of $X_{1}$ is $G_{123}p_{3}$. 
Assigning weights
\[
\begin{cases}
(2,3,1,1): & {\rm No}.\,501,\\
(1,3,1,1): & {\rm No}.\,512,\\
(1,2,1,1): & {\rm No}.\,550
\end{cases}
\]
to $(G_{123},p_{3},s_{126},z)$, the leading part of $X_{1}$ with respect to this weighting is the sum of $G_{123}p_{3}$ and a polynomial $h(s_{126},z)$ depending only on $s_{126}$ and $z$, without multiple factors. 
Therefore, by Lemma~\ref{lem:canonical form}, it follows that the Gorenstein singularity is an ordinary $cA_{d-3}$-singularity.

\subsubsection{\textbf{Del Pezzo fibration}}

It suffices to work on the $s_{135}$-chart or the $s_{123}$-chart. 
The fibers of $\widetilde{\Pi}\to\mP^{1}$ are weighted projective spaces as described in Theorem~\ref{thm:keySarkisov} (see also Proposition~\ref{prop:transition-fcn}). 
By cutting them with the sections corresponding to \eqref{eq:TypeDp}, we expect numerically to obtain a del Pezzo surface of degree $1$, namely $(6)\subset\mP(1^{2},2,3)$. 
Using the explicit defining equations of $X$ given in Subsection~\ref{subsec:Explicite-equations-of=003000X}, we verify that this is indeed the case for the general fiber. 
Therefore, $\widetilde{f}\colon\widetilde{X}\to\mP^{1}$ is a fibration whose general fiber is a del Pezzo surface of degree $1$.

This completes the proof of Theorem~\ref{thm:FanoSarkisov} in the case of Type~Dp.

\subsection{Type~Div}

That $Y$ is a weighted complete intersection as described in Theorem~\ref{thm:FanoSarkisov} follows numerically from the fact that $w\mP$ is the weighted projective space described in Theorem~\ref{thm:keySarkisov} (see also Proposition~\ref{prop:Pi5WPS}), and that $Y$ is obtained by cutting it with the sections corresponding to \eqref{eq:TypeDiv}. 
Using the explicit defining equations of $X$ given in Subsection~\ref{subsec:Explicite-equations-of=003000X}, we verify that this is indeed the case. 

It follows easily that $\widetilde{f}\colon\widetilde{X}\to Y$ contracts a divisor to a curve $C$, since $\widetilde{f}_{\Pi}\colon\widetilde{\Pi}\to w\mP$ has the same property. 
Since $\widehat{X}\dashrightarrow\widetilde{X}$ is a flop, the singularities of $\widetilde{X}$ coincide with those of $\widehat{X}$ by \cite{Ko}. 

It remains to show that, near each singular point of $Y$ along $C$, the morphism $\widetilde{f}$ is a simple $(2,1)$-contraction. 
Near the exceptional divisor $\{s_{123}=0\}$, the variety $\widetilde{X}$ is covered by the union of the $s_{124}$-, $s_{125}$-, and $s_{135}$-charts, as follows from the description of the unstable locus of the GIT quotient. 
Thus it suffices to work on these charts, and in fact it is enough to work on the $s_{124}$-chart, since all singular points of $\widetilde{X}$ lying on the $\widetilde{f}$-exceptional divisor turn out to be contained in this chart. We can check explicitly the conditions of Proposition~\ref{prop:simple cont} near the fibers through singular points of $\widetilde{X}$ lying on the $\widetilde{f}$-exceptional divisor, and hence $\widetilde{f}$ is a simple $(2,1)$-contraction near such fibers.

This completes the proof of Theorem~\ref{thm:FanoSarkisov} in the case of Type~Div.

\section*{\textbf{Appendix}}

\renewcommand{\thethm}{A.\arabic{thm}}
\renewcommand{\theequation}{A.\arabic{equation}}

\setcounter{thm}{0}
\setcounter{equation}{0}

\renewcommand{\thesubsection}{A.\arabic{subsection}}
\setcounter{subsection}{0}

\renewcommand{\thethm}{A.\arabic{thm}}
\renewcommand{\theequation}{A.\arabic{equation}}

\setcounter{thm}{0}
\setcounter{equation}{0}
\subsection{Equation of $\widehat{\Pi}$}\label{subsec:The-equation-ofPi0}

The following polynomials $F_1$--$F_9$ define the variety $\Pi_0$. Note that, setting $w_{2}=1$ in the following equations, we recover the defining equations of $\Pi_{\mP}$ (cf.~\cite[A.2]{Tak6}).

\begin{align*}
F_1
&=
-p_1 s_2 - p_2 s_3 + s_1 u_1
\notag\\
&\quad
+ \Bigl(
- p_3^2
- p_4^2 t_1
+ p_2 p_3 t_{126}
- p_1 p_4 t_{126}
- p_1^2 t_{136}
- p_2^2 t_1 t_{136}
- 2 p_1 p_3 t_{245}
\notag\\
&\qquad\qquad
- 2 p_2 p_4 t_1 t_{245}
- p_2 t_{136} u_1
- p_4 t_{245} u_1
- p_4 u_2
- p_2 t_{245} u_2
\Bigr) w_2,
\\[1ex]
F_2
&=
-p_3 s_2 - p_4 s_3 + s_1 u_2
\notag\\
&\quad
+ \Bigl(
- p_2 p_3 t_{125}
+ p_1 p_4 t_{125}
+ p_1^2 t_{135}
+ p_2^2 t_1 t_{135}
+ 2 p_1 p_3 t_{136}
+ 2 p_2 p_4 t_1 t_{136}
\notag\\
&\qquad\qquad
+ p_3^2 t_{245}
+ p_4^2 t_1 t_{245}
+ p_2 t_{135} u_1
+ p_4 t_{136} u_1
+ p_2 t_{136} u_2
+ p_4 t_{245} u_2
\Bigr) w_2,
\\[1ex]
F_3
&=
-p_1 s_3 + p_2 s_2 t_1 - p_2 s_1 t_2 + s_2 u_1
\notag\\
&\quad
+ \Bigl(
p_2 p_3 t_{124}
- p_1 p_4 t_{124}
- p_4^2 t_2
- p_2^2 t_{136} t_2
- 2 p_2 p_4 t_2 t_{245}
- p_1 t_{136} u_1
\notag\\
&\qquad\qquad
- p_3 t_{245} u_1
- p_3 u_2
- p_1 t_{245} u_2
\Bigr) w_2,
\\[1ex]
F_4
&=
-p_3 s_3 + p_4 s_2 t_1 - p_4 s_1 t_2 + s_2 u_2
\notag\\
&\quad
+ \Bigl(
- p_2 p_3 t_{123}
+ p_1 p_4 t_{123}
+ p_2^2 t_{135} t_2
+ 2 p_2 p_4 t_{136} t_2
+ p_4^2 t_2 t_{245}
\notag\\
&\qquad\qquad
+ p_1 t_{135} u_1
+ p_3 t_{136} u_1
+ p_1 t_{136} u_2
+ p_3 t_{245} u_2
\Bigr) w_2,
\\[1ex]
F_5
&=
p_1 s_2 t_1 + p_2 s_3 t_1 - p_1 s_1 t_2 - p_2 s_2 t_2 + s_3 u_1
\notag\\
&\quad
+ \Bigl(
p_1^2 t_{123}
+ p_2^2 t_1 t_{123}
+ p_1 p_3 t_{124}
+ p_2 p_4 t_1 t_{124}
+ p_3 p_4 t_2
- p_2^2 t_{125} t_2
- p_2 p_4 t_{126} t_2
\notag\\
&\qquad\qquad
+ p_1 p_2 t_{136} t_2
+ p_2 p_3 t_2 t_{245}
+ p_1 p_4 t_2 t_{245}
+ p_2 t_{123} u_1
- p_1 t_{125} u_1
- p_2 t_1 t_{136} u_1
- p_4 t_1 t_{245} u_1
\notag\\
&\qquad\qquad
- t_{136} u_1^2
- p_4 t_1 u_2
+ p_2 t_{124} u_2
- p_1 t_{126} u_2
- p_2 t_1 t_{245} u_2
- 2 t_{245} u_1 u_2
- u_2^2
\Bigr) w_2,
\\[1ex]
F_6
&=
p_3 s_2 t_1 + p_4 s_3 t_1 - p_3 s_1 t_2 - p_4 s_2 t_2 + s_3 u_2
\notag\\
&\quad
+ \Bigl(
p_1 p_3 t_{123}
+ p_2 p_4 t_1 t_{123}
+ p_3^2 t_{124}
+ p_4^2 t_1 t_{124}
- p_2 p_4 t_{125} t_2
- p_4^2 t_{126} t_2
\notag\\
&\qquad\qquad
- p_1 p_2 t_{135} t_2
- p_2 p_3 t_{136} t_2
- p_1 p_4 t_{136} t_2
- p_3 p_4 t_2 t_{245}
+ p_4 t_{123} u_1
- p_3 t_{125} u_1
\notag\\
&\qquad\qquad
+ p_2 t_1 t_{135} u_1
+ p_4 t_1 t_{136} u_1
+ t_{135} u_1^2
+ p_4 t_{124} u_2
- p_3 t_{126} u_2
+ p_2 t_1 t_{136} u_2
\notag\\
&\qquad\qquad
+ p_4 t_1 t_{245} u_2
+ 2 t_{136} u_1 u_2
+ t_{245} u_2^2
\Bigr) w_2,
\\[1ex]
F_7
&=
-s_2^2 + s_1 s_3
\notag\\
&\quad
+ \Bigl(
p_2 s_1 t_{123}
+ p_4 s_1 t_{124}
- p_2 s_2 t_{125}
- p_4 s_2 t_{126}
\Bigr) w_2
\notag\\
&\quad
+ \Bigl(
p_4^2 t_{123}
- p_3 p_4 t_{125}
- p_1 p_3 t_{135}
+ p_2 p_4 t_1 t_{135}
- p_2^2 t_{124} t_{135}
+ p_1 p_2 t_{126} t_{135}
- p_3^2 t_{136}
\notag\\
&\qquad\qquad
+ p_4^2 t_1 t_{136}
+ p_2^2 t_{123} t_{136}
- 2 p_2 p_4 t_{124} t_{136}
- p_1 p_2 t_{125} t_{136}
+ p_2 p_3 t_{126} t_{136}
+ p_1 p_4 t_{126} t_{136}
\notag\\
&\qquad\qquad
+ p_1^2 t_{136}^2
- p_2^2 t_1 t_{136}^2
+ 2 p_2 p_4 t_{123} t_{245}
- p_4^2 t_{124} t_{245}
- p_2 p_3 t_{125} t_{245}
- p_1 p_4 t_{125} t_{245}
\notag\\
&\qquad\qquad
+ p_3 p_4 t_{126} t_{245}
- p_1^2 t_{135} t_{245}
+ p_2^2 t_1 t_{135} t_{245}
+ p_1 p_3 t_{136} t_{245}
- p_2 p_4 t_1 t_{136} t_{245}
+ p_3^2 t_{245}^2
\notag\\
&\qquad\qquad
- p_4^2 t_1 t_{245}^2
+ p_4 t_{135} u_1
- 2 p_2 t_{136}^2 u_1
+ 2 p_2 t_{135} t_{245} u_1
- p_4 t_{136} t_{245} u_1
+ p_2 t_{135} u_2
\notag\\
&\qquad\qquad
+ 2 p_4 t_{136} u_2
- p_2 t_{136} t_{245} u_2
- 2 p_4 t_{245}^2 u_2
\Bigr) w_2^2,
\\[1ex]
F_8
&=
s_2 s_3 + s_1 s_2 t_1 - s_1^2 t_2
\notag\\
&\quad
+ \Bigl(
p_1 s_1 t_{123}
+ p_3 s_1 t_{124}
- p_1 s_2 t_{125}
- p_3 s_2 t_{126}
\Bigr) w_2
\notag\\
&\quad
+ \Bigl(
p_3 p_4 t_{123}
- p_3^2 t_{125}
+ p_2 p_3 t_1 t_{135}
+ p_1 p_4 t_1 t_{135}
- p_1 p_2 t_{124} t_{135}
+ p_1^2 t_{126} t_{135}
\notag\\
&\qquad\qquad
+ 2 p_3 p_4 t_1 t_{136}
+ p_1 p_2 t_{123} t_{136}
- p_2 p_3 t_{124} t_{136}
- p_1 p_4 t_{124} t_{136}
- p_1^2 t_{125} t_{136}
+ 2 p_1 p_3 t_{126} t_{136}
\notag\\
&\qquad\qquad
- 2 p_1 p_2 t_1 t_{136}^2
- p_2 p_4 t_{135} t_2
- p_4^2 t_{136} t_2
+ p_2^2 t_{136}^2 t_2
+ p_2 p_3 t_{123} t_{245}
+ p_1 p_4 t_{123} t_{245}
\notag\\
&\qquad\qquad
- p_3 p_4 t_{124} t_{245}
- 2 p_1 p_3 t_{125} t_{245}
+ p_3^2 t_{126} t_{245}
+ 2 p_1 p_2 t_1 t_{135} t_{245}
- p_2 p_3 t_1 t_{136} t_{245}
- p_1 p_4 t_1 t_{136} t_{245}
\notag\\
&\qquad\qquad
- p_2^2 t_{135} t_2 t_{245}
+ p_2 p_4 t_{136} t_2 t_{245}
- 2 p_3 p_4 t_1 t_{245}^2
+ p_4^2 t_2 t_{245}^2
+ p_3 t_{135} u_1
- 2 p_1 t_{136}^2 u_1
\notag\\
&\qquad\qquad
+ 2 p_1 t_{135} t_{245} u_1
- p_3 t_{136} t_{245} u_1
+ p_1 t_{135} u_2
+ 2 p_3 t_{136} u_2
- p_1 t_{136} t_{245} u_2
- 2 p_3 t_{245}^2 u_2
\Bigr) w_2^2,
\\[1ex]
F_9
&=
s_3^2 + s_2^2 t_1 - s_1 s_2 t_2
\notag\\
&\quad
+ \Bigl(
s_1 t_{123} u_1
- s_2 t_{125} u_1
+ s_1 t_{124} u_2
- s_2 t_{126} u_2
\Bigr) w_2
\notag\\
&\quad
+ \Bigl(
- p_3^2 t_{123}
- p_4^2 t_1 t_{123}
- p_2 p_3 t_{124} t_{125}
+ p_1 p_4 t_{124} t_{125}
+ p_2 p_3 t_{123} t_{126}
- p_1 p_4 t_{123} t_{126}
\notag\\
&\qquad\qquad
+ p_1^2 t_{124} t_{135}
+ p_2^2 t_1 t_{124} t_{135}
- p_1^2 t_{123} t_{136}
- p_2^2 t_1 t_{123} t_{136}
+ 2 p_1 p_3 t_{124} t_{136}
+ 2 p_2 p_4 t_1 t_{124} t_{136}
\notag\\
&\qquad\qquad
+ p_4^2 t_{125} t_2
+ p_2 p_3 t_{135} t_2
+ p_1 p_4 t_{135} t_2
- p_2^2 t_{126} t_{135} t_2
+ 2 p_3 p_4 t_{136} t_2
+ p_2^2 t_{125} t_{136} t_2
\notag\\
&\qquad\qquad
- 2 p_2 p_4 t_{126} t_{136} t_2
- 2 p_1 p_2 t_{136}^2 t_2
- 2 p_1 p_3 t_{123} t_{245}
- 2 p_2 p_4 t_1 t_{123} t_{245}
+ p_3^2 t_{124} t_{245}
+ p_4^2 t_1 t_{124} t_{245}
\notag\\
&\qquad\qquad
+ 2 p_2 p_4 t_{125} t_2 t_{245}
- p_4^2 t_{126} t_2 t_{245}
+ 2 p_1 p_2 t_{135} t_2 t_{245}
- p_2 p_3 t_{136} t_2 t_{245}
- p_1 p_4 t_{136} t_2 t_{245}
\notag\\
&\qquad\qquad
- 2 p_3 p_4 t_2 t_{245}^2
- t_{136}^2 u_1^2
+ t_{135} t_{245} u_1^2
+ t_{135} u_1 u_2
- t_{136} t_{245} u_1 u_2
+ t_{136} u_2^2
- t_{245}^2 u_2^2
\Bigr) w_2^2.
\end{align*}

\subsection{Automorphisms of $\Pi_{\mP}$}\label{subsec:Automorphisms-of}
\begin{prop}
\label{prop:autom}
The following assertions hold:
\begin{enumerate}[$(1)$]

\item Let $\Pi_{\mP}$ be a key variety for \textup{Type Div}. There exists an automorphism of $\,\Pi_{\mP}$ which moves a point in $\Pi_{\mP}\cap\mP(p_{3},u_{1},t_{123},s_1)$
with $u_{1}=p_{3}^2,\,s_{1}=1$ (resp. a point in $\Pi_{\mP}\cap\mP(p_{3},u_{1},s_{1})$ with $u_{1}=p_{3}^2,\,s_{1}=1$) to a point in $\Pi_{\mP}\cap\mP(t_{123},s_1)$
with $s_{1}=1$ (resp.  the $s_{1}$-point).

\item Let $\Pi_{\mP}$ be the key variety for No.550 or 878. There exists an automorphism of $\,\Pi_{\mP}$
which moves a point in $\Pi_{\mP}\cap \mP(t_{123},t_{135},s_1)$
with $s_{1}=1$ (resp. a point in $\Pi_{\mP}\cap \mP(t_{123},s_1)$ with $s_1=1$) to a point in $\Pi_{\mP}\cap \mP(t_{135},s_1)$
with $s_{1}=1$ (resp. the $s_1$-point).

\item Let $\Pi_{\mP}$ be the key variety for No.550. There exists an automorphism of $\,\Pi_{\mP}$
which moves a point in $\mP(t_{135},s_1)$
with $s_{1}=1$ to the $s_1$-point.
\end{enumerate}
\end{prop}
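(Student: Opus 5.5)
The plan is to produce the automorphisms explicitly, as weighted homogeneous coordinate changes of the ambient weighted projective space of $\Pi_{\mP}$ that preserve the ideal generated by $G_1$--$G_9$ of \cite[A.2]{Tak6}. The natural source is the structure of $\Pi_{\mA}^{14}$ as built from the quadratic Jordan algebra of a cubic form \cite{Tak5}. There the group acting on the Freudenthal-type data contains unipotent \emph{translations}: one adds a multiple of one coordinate, or a weighted homogeneous polynomial in a few coordinates, to another coordinate. Each such translation induces compensating polynomial changes in the remaining coordinates and preserves $\Pi_{\mA}^{14}$. Since the weights are positive, each translation must be weighted homogeneous. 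I would look for one-parameter families of this form:
\[
t_{135}\mapsto t_{135}+\lambda t_{123}^{a}\,(\cdots),\qquad t_{123}\mapsto t_{123}+\mu\, u_1^{b}\,(\cdots),\qquad u_1\mapsto u_1+\nu p_3^2 .
\]
Each family should fix $s_1$ and leave the variables $s_1,s_2,s_3$ essentially untouched apart from terms vanishing at the relevant points.

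The steps, in order, are as follows.

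\textbf{Step 1.} Determine the weights for each family from \cite[Table~3]{Tak6}. This tells us which substitutions are weighted homogeneous, for instance whether $u_1\mapsto u_1-p_3^2$ is allowed (it requires $w(u_1)=2w(p_3)$) in Type Div. It also tells us which substitutions are allowed for $t_{123}$ and $t_{135}$ in Nos.~550 and 878.

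\textbf{Step 2.} For each candidate substitution, solve for the correction terms in the other coordinates so that $G_1$--$G_9$ are mapped into the ideal they generate. Because $G_1$--$G_6$ are linear in $s_1,s_2,s_3,w_1$ (Proposition~\ref{prop:widehat=00007BPi=00007D bir}), I would first impose invariance of these six equations. This should determine the corrections to the $p_i$, $u_i$ and $t_i$ essentially uniquely, via the matrix $\mathsf{M}$ and its minors. I would then check $G_7$--$G_9$ by direct substitution, using a computer algebra system as in \cite{Tak9}.

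\textbf{Step 3.} Restrict to the relevant coordinate strata and choose the parameters so that the given point is moved to the target.
\begin{itemize}
\item[(1)] On $\Pi_{\mP}\cap\mP(p_3,u_1,t_{123},s_1)$ with $u_1=p_3^2$ and $s_1=1$, the translation $u_1\mapsto u_1-p_3^2$, combined with the induced $p_3$-change, kills $p_3$ and $u_1$.
\item[(2)] Use the translation of $t_{123}$ by $t_{135}$, or the reverse one.
\item[(3)] Use a translation removing $t_{135}$ from points with $s_1=1$.
\end{itemize}
In each case we must verify that the image point lies in the claimed stratum, that is, that every other coordinate stays zero. The ``resp.'' statements follow by specializing $t_{123}=0$ (or $t_{135}=0$). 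Invertibility is automatic, since the maps are unipotent with inverse given by the opposite parameter.

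The main obstacle is Step 2. We must guarantee that a given translation actually extends to an automorphism preserving all nine quadrics, and the correction terms may be nonlinear. My first approach is to derive these terms conceptually from the Jordan-algebra description of $\Pi_{\mA}^{14}$ in \cite{Tak5}, where translations by elements of the algebra are known symmetries. I would then specialize them to the weights at hand and confirm by direct computation.
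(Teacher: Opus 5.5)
You are right that the proof must exhibit explicit weighted homogeneous coordinate changes preserving the ideal of $G_1$--$G_9$ and check them by computation; the paper itself only refers to \cite[Sec.~6]{Tak9}. However, the substitutions you name in Step~3 are of the wrong kind, and they fail exactly at the decisive steps.

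Each of them, together with the compensating changes of the $p_i,u_i,t_i$ you plan to extract from $\mathsf{M}$, sends every coordinate other than $s_1,s_2,s_3$ to a polynomial in those same coordinates. Such a map restricts to a graded automorphism of the polynomial ring in the non-$s$ coordinates, because an injective graded endomorphism is bijective on each finite-dimensional graded piece. Hence it carries a point with nonzero non-$s$ part to a point with nonzero non-$s$ part. So it can never move to the $s_1$-point a point with $t_{135}\neq0$ (part (3)), with $t_{123}\neq0=t_{135}$ (part (2), resp.), or with $p_3\neq0$ (part (1), resp.).

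In particular, the ``resp.'' statements do not follow by specialization. For example, translating $t_{123}$ by a multiple of $t_{135}$ leaves $t_{123}$ unchanged at points with $t_{135}=0$. Your concrete choices are also not homogeneous:
\begin{itemize}
\item In every Type Div case $w(p_3)=w(u_1)=w(s_1)=d+1$, so $u_1\mapsto u_1-p_3^2$ is not allowed. The relation $u_1=p_3^2$ in the statement is just the conic $s_1u_1=p_3^2$ (the restriction of $G_1$ to $\mP(p_3,u_1,s_1)$) written in the chart $s_1=1$.
\item For No.~878, $w(t_{123})=4\neq3=w(t_{135})$, so translating $t_{123}$ by $t_{135}$ is not homogeneous either.
\end{itemize}

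The missing idea is that the automorphisms must translate the relevant coordinates by multiples of $s_1$ itself. This is possible exactly because of the weight coincidences $w(p_3)=w(u_1)=w(s_1)$ (Type Div), $w(t_{123})=w(s_1)$ (Nos.~550 and 878), and $w(t_{135})=w(s_1)$ (No.~550). The natural leading parts are:
\begin{itemize}
\item in (1), the unipotent substitution $p_3\mapsto p_3-as_1$, $u_1\mapsto u_1-2ap_3+a^2s_1$, $s_1\mapsto s_1$, which preserves $s_1u_1-p_3^2$;
\item in (2), $t_{123}\mapsto t_{123}-bs_1$;
\item in (3), $t_{135}\mapsto t_{135}-cs_1$.
\end{itemize}
The whole content of the proposition is that these extend, with suitable compensating terms, to automorphisms of $\Pi_{\mP}$. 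Such maps move the $s_1$-point, so they are not compatible with the projection from it. Consequently the projection data ($\mathsf{M}$, $\overline{\Pi}_1$) on which your Step~2 relies cannot produce them. Their existence has to be checked by direct computation on $G_1$--$G_9$, which is what \cite[Sec.~6]{Tak9} does.
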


\begin{proof}
See \cite[Sec.~6]{Tak9}.
\end{proof}

\subsection{Explicit equations of prime $\mQ$-Fano $3$-folds}\label{subsec:Explicite-equations-of=003000X}

In this subsection, we record explicit data of the sections cutting out $X$ from $\Pi_{\mP}$. 
In \cite[Sec.~4]{Tak6}, sections defining the anticanonical $K3$ surface of $X$ are given explicitly. 
Starting from those, we slightly modify the choice of parameters so that, in each case of Type~Dp and Type~Div, the coordinates of the ambient weighted projective space of $X$ agree. 
(For Type~Dp No.~872, we need to replace $p_{2}$ by $z$.) 
For convenience, we list the resulting data here.

For Nos.~550, 577, and 878, we also explain how to apply Proposition~\ref{prop:autom} in the proof of Theorem~\ref{thm:FanoSarkisov} (Subsection \ref{subsection:prime}).

\subsubsection{\textbf{\textup{Type Dp}}}

\noindent{\small{}}{\small\par}

\noindent\vspace{3pt}

\noindent{}%
\fcolorbox{black}{white}{\textbf{No.$\,$501}}

\vspace{3pt}

\noindent\textbf{$\bullet$ Sections for $X$} 

\renewcommand{\arraystretch}{1.4}
\[
\begin{array}{|c|c|c|c|}
\hline
\text{weight} & 2 & 3 & 4\\
\hline	
\text{section}
&
\begin{array}{c}
t_{1}=a_{1}z^{2}\\
t_{245}=a_{245}z^{2}
\end{array}
&
\begin{array}{c}
t_{2}=a_{2}t_{126}\\
\quad +\,b_{2}z^{3}
\end{array}
&
\begin{array}{c}
p_{2}=c_{2}t_{126}z+d_{2}z^{4}\\
t_{124}=a_{124}t_{126}z+b_{124}z^{4}\\
t_{136}=a_{136}t_{126}z+b_{136}z^{4}
\end{array}
\\
\hline
\end{array}
\]

\[
\renewcommand{\arraystretch}{1.4}
\begin{array}{|c|c|c|}
\hline
\text{weight} & 5 & 6\\
\hline
\text{section}
&
\begin{array}{c}
p_{1}=b_1 t_{126} z^2+c_1 z^5\\
t_{125}=a_{125} t_{126} z^2+b_{125} z^5
\end{array}
&
\begin{array}{c}
p_{4}=a_{4}u_{1}+b_{4}t_{126}^{2}+c_4 t_{126} z^3+d_4 z^6\\
t_{123}=a_{123}u_{1}+b_{123}t_{126}^{2}+c_{123} t_{126} z^3+d_{123} z^6\\
t_{135}=a_{135}u_{1}+b_{135}t_{126}^{2}+c_{135} t_{126} z^3+d_{135} z^6
\end{array}
\\
\hline
\end{array}
\]

\noindent\textbf{$\bullet$ Embedding of $X$}: 
\[
X\subset\mP(z,t_{126},u_{1},p_{3},u_{2},s_{1},s_{2},s_{3})=\mP(1,3,6,7,8^{2},9,10).
\]

\vspace{5pt}

\noindent{}%
\fcolorbox{black}{white}{\textbf{No.$\,$512}}

\vspace{3pt}

\noindent\textbf{$\bullet$ Sections for $X$} { {\small{}}{\small\par}

\renewcommand{\arraystretch}{1.4}
\[
\begin{array}{|c|c|c|}
\hline
\text{weight} & 2 & 3\\
\hline
\text{section}
&
\begin{array}{c}
t_{1}=a_1 z^2\\
t_{245}=a_{245} z^2
\end{array}
&
\begin{array}{c}
p_{2}=a_{2}t_{126}+b_2 z^3\\
t_{2}=c_{2}t_{126}+d_2 z^3
\end{array}
\\
\hline
\end{array}
\]

\[
\renewcommand{\arraystretch}{1.4}
\begin{array}{|c|c|c|}
\hline
\text{weight} & 4 & 5\\
\hline
\text{section}
&
\begin{array}{c}
p_{1}=b_1 t_{126} z+ c_1 z^4\\
t_{124}=a_{124} t_{126} z+b_{124} z^4\\
t_{136}=a_{136} t_{126} z+b_{136} z^4
\end{array}
&
\begin{array}{c}
p_{4}=a_{4}u_{1}+b_4 t_{126}z^2+c_4 z^5\\
t_{125}=a_{125}u_{1}+b_{125} t_{126} z^2+c_{125} z^5
\end{array}
\\
\hline
\end{array}
\]

\[
\renewcommand{\arraystretch}{1.4}
\begin{array}{|c|c|}
\hline
\text{weight} & 6\\
\hline
\text{section}
&
\begin{array}{c}
t_{123}=a_{123}p_{3}+b_{123}t_{126}^{2}+c_{123} u_1 z+d_{123} t_{126} z^3+e_{123}z^6\\
t_{135}=a_{135}p_{3}+b_{135}t_{126}^{2}+c_{135} u_1 z+d_{135} t_{126} z^3+e_{135}z^6
\end{array}
\\
\hline
\end{array}
\]

\noindent\textbf{$\bullet$ Embedding of $X$}: 
\[
X\subset\mP(z,t_{126},u_{1},p_{3},u_{2},s_{1},s_{2},s_{3})=\mP(1,3,5,6,7^{2},8,9).
\]

\vspace{5pt}

\noindent{}%
\fcolorbox{black}{white}{\textbf{No.$\,$550}}

\vspace{3pt}

\noindent\textbf{$\bullet$ Sections for $X$} 

\renewcommand{\arraystretch}{1.4}
\[
\begin{array}{|c|c|c|}
\hline
\text{weight} & 2 & 3\\
\hline
\text{section}
&
\begin{array}{c}
p_{2}=a_2 z^2\\
t_{1}=a_1 z^2\\
t_{245}=a_{245}z^2
\end{array}
&
\begin{array}{c}
t_2=b_2 t_{126}+c_2 z^3\\
p_1=b_{1}t_{126}+c_1 z^3
\end{array}
\\
\hline
\end{array}
\]

\[
\renewcommand{\arraystretch}{1.4}
\begin{array}{|c|c|c|}
\hline
\text{weight} & 4 & 5\\
\hline
\text{section}
&
\begin{array}{c}
p_{4}=a_{4}u_{1}+b_4 t_{126} z+c_4 z^4\\
t_{124}=a_{124}u_{1}+b_{124} t_{126} z+c_{124} z^4\\
t_{136}=a_{136}u_{1}+b_{136} t_{126} z+c_{136} z^4
\end{array}
&
\begin{array}{c}
t_{125}=a_{125}p_{3}\\
\quad + b_{125}u_{1}z\\
\quad + c_{125} t_{126} z^2\\
\quad + d_{125} z^5
\end{array}
\\
\hline
\end{array}
\]
\[
\renewcommand{\arraystretch}{1.4}
\begin{array}{|c|c|}
\hline
\text{weight} & 6\\
\hline
\text{section}
&
\begin{array}{c}
t_{123}=a_{123}u_{2}+b_{123}t_{126}^{2}+c_{123}p_{3}z+d_{123}u_{1}z^2+e_{123} t_{126} z^3+f_{123} z^6\\
t_{135}=a_{135}u_{2}+b_{135}t_{126}^{2}+c_{135}p_{3}z+d_{135}u_{1}z^2+e_{135} t_{126} z^3+f_{135} z^6
\end{array}
\\
\hline
\end{array}
\]

\noindent\textbf{$\bullet$ Embedding of $X$}: 
\[
X\subset\mP(z,t_{126},u_{1},p_{3},u_{2},s_{1},s_{2},s_{3})=\mP(1,3,4,5,6^{2},7,8).
\]
\vspace{3pt}
\noindent For a more general $X$, the two sections of weight $6$ is obtained by adding terms $g_{123}s_{1}$ and $g_{135}s_1$ (with $g_{123},g_{135} \in \mC$) to the right-hand sides of the corresponding expressions listed in the above table. In this situation, let $\mathsf{p}_{6}$ denote the $\nicefrac{1}{6}(1,1,5)$-singularity of $X$. We see that this point lies in $\Pi_\mP\cap \mP(u_2,t_{123},t_{135},s_1)=\mP(t_{123},t_{135},s_1)$. 
Since $X$ is general, we may assume that $\mathsf{p}_{6}$ satisfies $s_{1}=1$. Then, by applying the automorphism of $\Pi_{\mP}$ described in Proposition~\ref{prop:autom}~(2), we may move $\mathsf{p}_{6}$ to a point in $\mP(t_{135},s_1)$ with $s_1=1$. Moreover, the automorphism as in Proposition \ref{prop:autom} (3) moves
a point in $\mP(t_{135},s_1)$ with $s_1=1$ to the $s_1$-point.
Therefore, we may assume that $\mathsf{p}_{6}$ is the $s_{1}$-point of $\Pi_{\mP}$. Under this assumption, it follows that $g_{123}=g_{135}=0$. Consequently, the variety $X$ defined by the sections given in the above table is also general.
\vspace{5pt}

\noindent{}%
\fcolorbox{black}{white}{\textbf{No.$\,$872}}\textbf{ }

\vspace{3pt}

\noindent\textbf{$\bullet$ Sections for $X$} 

\renewcommand{\arraystretch}{1.4}
\[
\begin{array}{|c|c|c|}
\hline
\text{weight} & 2 & 3\\
\hline
\text{section}
&
\begin{array}{c}
p_{1}=a_{1}p_{2}^{2}\\
t_{1}=b_{1}p_{2}^{2}\\
t_{245}=a_{245}p_{2}^{2}
\end{array}
&
\begin{array}{c}
t_{2}=a_{2}t_{126}+b_{2}u_{1}+c_{2}p_{2}^{3}\\
p_4=a_4 t_{126}+b_4 u_{1}+c_4 p_{2}^{3}
\end{array}
\\
\hline
\end{array}
\]

\renewcommand{\arraystretch}{1.4}
\[
\begin{array}{|c|c|}
\hline
\text{weight} & 4\\
\hline
\text{section}
&
\begin{array}{c}
t_{124}=a_{124}p_{3}+b_{124}t_{126} p_{2}+c_{124}u_{1}p_{2}+d_{124}p_{2}^{4}\\
t_{136}=a_{136}p_{3}+b_{136}t_{126} p_{2}+c_{136}u_{1}p_{2}+d_{136}p_{2}^{4}
\end{array}
\\
\hline
\end{array}
\]

\renewcommand{\arraystretch}{1.4}
\[
\begin{array}{|c|c|}
\hline
\text{weight} & 5\\
\hline
\text{section}
&
\begin{array}{c}
t_{125}=a_{125}u_{2}+b_{125}p_{3}p_{2}+c_{125}t_{126} p_{2}^{2}\\
\qquad{}+d_{125}u_{1}p_{2}^{2}+e_{125}p_{2}^{5}
\end{array}
\\
\hline
\end{array}
\]

\renewcommand{\arraystretch}{1.4}
\[
\begin{array}{|c|c|}
\hline
\text{weight} & 6\\
\hline
\text{section}
&
\begin{array}{c}
t_{123}=a_{123}t_{126}^{2}+b_{123}t_{126} u_{1}+c_{123}u_{1}^{2}+d_{123}u_{2}p_{2}\\
\qquad{}+e_{123}p_{3}p_{2}^{2}+f_{123}t_{126} p_{2}^{3}+g_{123}u_{1}p_{2}^{3}+h_{123}p_{2}^{6}\\[2pt]
t_{135}=a_{135}t_{126}^{2}+b_{135}t_{126}u_{1}+c_{135}u_{1}^{2}+d_{135}u_{2}p_{2}\\
\qquad{}+e_{135}p_{3}p_{2}^{2}+f_{135}t_{126} p_{2}^{3}+g_{135}u_{1}p_{2}^{3}+h_{135}p_{2}^{6}
\end{array}
\\
\hline
\end{array}
\]

\noindent\textbf{$\bullet$ Embedding of }$X$: 
\[
X\subset\mP(p_{2},t_{126},u_{1},p_{3},u_{2},s_{1},s_{2},s_{3})=\mP(1,3^{2},4,5^{2},6,7).
\]

\subsubsection{\textbf{\textup{Type Div}}}

\noindent{\small{}}{\small\par}

\noindent\vspace{3pt}

\noindent{}%
\fcolorbox{black}{white}{\textbf{No.$\,$577}}

\vspace{3pt}

\noindent\textbf{$\bullet$ Sections for $X$}

\renewcommand{\arraystretch}{1.4}
\[
\begin{array}{|c|c|c|}
\hline
\text{weight} & 2 & 3\\
\hline
\text{section}
&
\begin{array}{c}
t_{1}=a_{1}t_{245}^{2}\\
t_{126}=a_{126}t_{245}^{2}\\
t_{136}=a_{136}t_{245}^{2}
\end{array}
&
\begin{array}{c}
p_2=a_2 t_{135}+b_2 t_{245}^{3}\\
t_{2}=c_{2}t_{135}+d_{2}t_{245}^{3}\\
t_{124}=a_{124}t_{135}+b_{124}t_{245}^{3}\\
t_{125}=a_{125}t_{135}+b_{125}t_{245}^{3}
\end{array}
\\
\hline
\end{array}
\]

\[
\renewcommand{\arraystretch}{1.4}
\begin{array}{|c|c|c|}
\hline
\text{weight} & 4 & 5\\
\hline
\text{section}
&
\begin{array}{c}
p_{4}=a_{4}p_{1}+b_{4}t_{135}t_{245}+c_{4}t_{245}^{4}\\
t_{123}=a_{123}p_{1}+b_{123}t_{135}t_{245}+c_{123}t_{245}^{4}
\end{array}
&
\begin{array}{c}
\begin{aligned}
p_{3} &= a_{3}u_{1}\\
&\quad + b_{3}p_{1}t_{245}\\
&\quad + c_{3}t_{135}t_{245}^{2}\\
&\quad + d_{3}t_{245}^{5}
\end{aligned}
\end{array}
\\
\hline
\end{array}
\]
\noindent$\bullet$\textbf{ Embedding of $X$}: 
\[
X\subset\mP(t_{245},t_{135},p_{1},u_{1},s_{1},u_{2},s_{2},s_{3})=\mP(1,3,4,5^{2},6^{2},7).
\]

\noindent\vspace{3pt}

\noindent 

For a more general $X$, the section of weight $5$ is obtained by adding a term $e_{3}s_{1}$ (with $e_{3}\in \mC$) to the right-hand side of the corresponding expression listed in the above table. In this situation, let $\mathsf{p}_{5}$ denote one of the $\nicefrac{1}{5}(1,1,4)$-singularities of $X$. We see that this point lies in $\Pi_\mP\cap \mP(p_{3},u_1,s_{1})$. We may verify that $\Pi_{\mP}\cap \mP(p_{3},u_1,s_{1})=\{p_{3}^{2}=u_{1}s_{1}\}$.
Since $X$ is general, we may assume that $\mathsf{p}_{5}$ satisfies $s_{1}=1$ and $u_{1}=p_{3}^{2}$. Then, by applying the automorphism of $\Pi_{\mP}$ described in Proposition~\ref{prop:autom}~(1), we may move $\mathsf{p}_{5}$ to the $s_1$-point. Therefore, we may assume that $\mathsf{p}_{5}$ is the $s_{1}$-point of $\Pi_{\mP}^{13}$. Under this assumption, it follows that $e_{3}=0$. Consequently, the variety $X$ defined by the sections given in the above table is also general.

\vspace{5pt}

\noindent{}%
\fcolorbox{black}{white}{\textbf{No.$\,$878}}

\vspace{3pt}

\noindent\textbf{$\bullet$ Sections for $X$} \renewcommand{\arraystretch}{1.4}
\[
\begin{array}{|c|c|}
\hline
\text{weight} & 2\\
\hline
\text{section}
&
\begin{array}{c}
p_{2}=a_{2}t_{245}^{2}\\
t_{1}=a_{1}t_{245}^{2}\\
t_{126}=a_{126}t_{245}^{2}\\
t_{136}=a_{136}t_{245}^{2}
\end{array}
\\
\hline
\end{array}
\]

\[
\renewcommand{\arraystretch}{1.4}
\begin{array}{|c|c|}
\hline
3 & 4\\
\hline
\begin{array}{c}
p_4=a_4 p_{1}+b_4 t_{135}+c_4 t_{245}^{3}\\
t_{2}=b_{2}p_{1}+c_{2}t_{135}+d_2 t_{245}^{3}\\
t_{124}=a_{124}p_{1}+b_{124}t_{135}+c_{124}t_{245}^{3}\\
t_{125}=a_{125}p_{1}+b_{125}t_{135}+c_{125}t_{245}^{3} 
\end{array}
&
\begin{array}{c}
\begin{aligned}
p_{3} &= a_{3}u_1\\
&\quad + b_3 p_1 t_{245}\\
&\quad + c_3 t_{135} t_{245}\\
&\quad + d_3 t_{245}^4
\end{aligned}\\[4pt]
\begin{aligned}
t_{123} &= a_{123}u_1\\
&\quad + b_{123} p_1 t_{245}\\
&\quad + c_{123} t_{135} t_{245}\\
&\quad + d_{123} t_{245}^4
\end{aligned}
\end{array}
\\
\hline
\end{array}
\]

\noindent\textbf{$\bullet$ Embedding of $X$}: 
\[
X\subset\mP(t_{245},p_{1},t_{135},s_{1},u_{1},u_{2},s_{2},s_{3})=\mP(1,3^{2},4^{2},5^{2},6).
\]

\noindent\vspace{3pt}

\noindent For a more general $X$, the two sections of weight $4$ is obtained by adding terms $e_{3}s_{1}$ and $e_{123}s_1$ (with $e_{3},e_{123} \in \mC$) to the right-hand sides of the corresponding expressions listed in the above table. In this situation, let $\mathsf{p}_{4}$ denote one of the $\nicefrac{1}{4}(1,1,3)$-singularities of $X$. We see that this point lies in $\Pi_\mP\cap\mP(p_{3},u_{1},t_{123},s_{1})$. We may verify that
\[
\Pi_{\mP}\cap \mP(p_{3},u_{1},t_{123},s_{1})=\{p_{3}^{2}=u_{1}s_{1}\}.
\]
Since $X$ is general, we may assume that $\mathsf{p}_{4}$ satisfies $s_{1}=1$ and $u_{1}=p_{3}^{2}$. Then, by applying the automorphism of $\Pi_{\mP}$ described in Proposition~\ref{prop:autom}~(1), we may move $\mathsf{p}_{4}$ to a point in $\mP(t_{123},s_{1})$ with $s_1=1$. Moreover, the automorphism as in Proposition \ref{prop:autom} (2) moves
a point $\mP(t_{123},s_{1})$ with $s_1=1$ to the $s_1$-point. Therefore, we may assume that $\mathsf{p}_{4}$ is the $s_{1}$-point of $\Pi_{\mP}^{13}$. Under this assumption, it follows that $e_{3}=e_{123}=0$. Consequently, the variety $X$ defined by the sections given in the above table is also general.

\vspace{5pt}

\noindent{}%
\fcolorbox{black}{white}{\textbf{No.$\,$1766}}

\vspace{3pt}

\noindent\textbf{$\bullet$ Sections for $X$} 

\noindent}

\[
\renewcommand{\arraystretch}{1.4}
\begin{array}{|c|c|c|}
\hline
\text{weight} & 1 & 2\\
\hline
\text{section}
&
p_{2}=a_{2}t_{245}
&
\begin{array}{c}
p_{4}=a_{4}p_{1}+b_{4}t_{245}^{2}\\
t_{1}=a_{1}p_{1}+b_{1}t_{245}^{2}\\
t_{126}=a_{126}p_{1}+b_{126}t_{245}^{2}\\
t_{136}=a_{136}p_{1}+b_{136}t_{245}^{2}
\end{array}
\\
\hline
\end{array}
\]

\[
\renewcommand{\arraystretch}{1.4}
\begin{array}{|c|c|}
\hline
3 & 4\\
\hline
\begin{array}{c}
\begin{aligned}
p_{3} &= a_{3}u_{1}+b_{3}t_{135}\\
&\quad +c_{3}t_{245}p_{1}+d_{3}t_{245}^{3}
\end{aligned}\\[4pt]
\begin{aligned}
t_{2} &= b_{2}u_{1}+c_{2}t_{135}\\
&\quad +d_{2}t_{245}p_{1}+e_{2}t_{245}^{3}
\end{aligned}\\[4pt]
\begin{aligned}
t_{124} &= a_{124}u_{1}+b_{124}t_{135}\\
&\quad +c_{124}t_{245}p_{1}+d_{124}t_{245}^{3}
\end{aligned}\\[4pt]
\begin{aligned}
t_{125} &= a_{125}u_{1}+b_{125}t_{135}\\
&\quad +c_{125}t_{245}p_{1}+d_{125}t_{245}^{3}
\end{aligned}
\end{array}
&
\begin{array}{c}
\begin{aligned}
t_{123} &= a_{123}u_{2}+b_{123}p_{1}^{2}+c_{123}t_{245}u_{1}\\
&\quad +d_{123}t_{245}t_{135}+e_{123}t_{245}^{2}p_{1}+f_{123}t_{245}^{4}
\end{aligned}
\end{array}
\\
\hline
\end{array}
\]\noindent\textbf{$\bullet$ Embedding of $X$}: 
\[
X\subset\mP(t_{245},p_{1},u_{1},s_{1},t_{135},u_{2},s_{2},s_{3})=\mP(1,2,3^{3},4^{2},5).
\]

\subsection{Lemma for $cA$-singularities}

The following lemma is expected to be well known for experts; however, since we needed a version that is convenient for use in this paper, we include a proof. 
Its formulation and proof are based on \cite[Thm.~3 and Cor.~4]{Mo} and \cite[Prop.~4.6]{Pa}. 
It is used to determine the singularities of $\widetilde{X}$ along the flipped curve in the case of Type~Dp, and the singularities of $Y$ along $C$ in the case of Type~Div (as for the latter case, see also Proposition \ref{prop:simple cont}).

\begin{lem}
\label{lem:canonical form}
Let $g:=g(x)$ be an element of $\,\mC\{x\}$, where $x=(x_{1},x_{2},x_{3},x_{4})$. 
We assign weights $(r_{1},r_{2},1,1)$ to $(x_{1},x_{2},x_{3},x_{4})$. 
Assume the following conditions:

\begin{enumerate}[$(1)$]

\item $r_{1}\geq r_{2}\geq 0$.

\item The order of $g(x_{1},x_{2},x_{3},x_{4})$ is at least $3$.

\item Let $\,r:=r_{1}+r_{2}$. 
Then the weight of $g$ is $\,r$ (here, the weight of an element of $\,\mC\{x\}$ means the minimum of the weights of the monomials appearing in it), and the weight $r$ part $g_{r}$ of $\,g$ is a polynomial in $x_{3},x_{4}$ only
(then, from the definition of weights, $g_{r}$ is a homogeneous polynomial of degree $r$. 
Moreover, by the conditions $(2)$ and $(3)$, we have $r\geq 3$, and hence if $r_{2}=0$, then $r_{1}\geq 3$.)

\end{enumerate}

Then there exists a change of variables $x_{i}\mapsto \widetilde{x}_{i}\ (1\leq i\leq 4)$ that is congruent to the identity modulo $(x)^{2}$, such that $\,x_{1}x_{2}+g$ is transformed into a nonzero constant multiple of $\ \ \widetilde{x}_{1}\widetilde{x}_{2}+\widetilde{g}(\widetilde{x}_{3},\widetilde{x}_{4})$, where $\, \widetilde{g}$ has order $\,r$, and its order $\,r$ part is equal to a nonzero constant multiple of $g_{r}$. 
In particular, if $g_{r}$ is a product of $\,r$ pairwise coprime linear forms, then the singularity defined by $\,x_{1}x_{2}+g$ is an ordinary $cA_{r-1}$-singularity.

Moreover, assume that $g(x_1,0,0,0)$ is identically zero. 
Then, if necessary, by making an additional change of variables, the set $\{x_{2}=x_{3}=x_{4}=0\}$ is mapped to $\{\widetilde{x}_{2}=\widetilde{x}_{3}=\widetilde{x}_{4}=0\}$.
\end{lem}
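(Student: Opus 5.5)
The plan is to absorb, step by step, every monomial of $g$ of weight greater than $r$ that involves $x_1$ or $x_2$ into the product $x_1x_2$. This follows the splitting argument in \cite[Thm.~3]{Mo} and \cite[Prop.~4.6]{Pa}. We work in the weighted filtration with $\mathrm{wt}(x_1,x_2,x_3,x_4)=(r_1,r_2,1,1)$. Then $x_1x_2$ has weight $r$, and $g=g_r(x_3,x_4)+g_{>r}$. Every monomial of $g$ is of one of two kinds. Either it is divisible by $x_1$ or $x_2$, or it is a pure monomial in $x_3,x_4$. A monomial of the first kind can be written as $x_1a$ or $x_2b$. Since its weight exceeds $r$ (the weight $r$ part contains no $x_1,x_2$), we get $\mathrm{wt}(a)>r_2$ or $\mathrm{wt}(b)>r_1$ respectively.

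\textbf{Step 1 (iterative completion).} Write $g=x_1A+x_2B+h(x_3,x_4)$, choosing the decomposition so that $x_1A$ has no $x_2$-free conflict. For instance, put every monomial containing $x_2$ into $x_2B$, and the remaining monomials containing $x_1$ into $x_1A$. Then
\[
x_1x_2+g=(x_1+B)(x_2+A)-AB+h .
\]
We set $x_1'=x_1+B$ and $x_2'=x_2+A$. Condition (2) gives $\mathrm{ord}\,g\ge3$, so $A,B\in(x)^2$ and the change of variables is the identity modulo $(x)^2$. The weight conditions give $\mathrm{wt}(A)>r_2$ and $\mathrm{wt}(B)>r_1$, hence $\mathrm{wt}(AB)>r$. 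The new error term $-AB+h$ therefore has weight $\ge r$ with the same weight-$r$ part $g_r$, while its $x_1,x_2$-dependent part has strictly larger weight than before. When $r_2=0$ this needs a little care: $\mathrm{wt}(x_2)=0$, so one should instead measure by weight and order together, using that $A$ has order $\ge2$. Repeating the step, the weights of the mixed terms go to infinity. The composite change of variables then converges in the $(x)$-adic topology, and in fact analytically, by the standard estimates in \cite{Mo}; alternatively one can invoke the analytic splitting lemma/Weierstrass preparation. In the limit we obtain $\widetilde{x}_1\widetilde{x}_2+\widetilde{g}(\widetilde{x}_3,\widetilde{x}_4)$, up to a unit. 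Here $\widetilde{g}$ has weight-$r$ part $g_r$. Since weight equals degree on $x_3,x_4$, its order is $r$ and its order-$r$ part is $g_r$.

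\textbf{Step 2 (ordinary $cA_{r-1}$).} If $g_r$ is a product of $r$ pairwise coprime linear forms, the result $\widetilde{x}_1\widetilde{x}_2+\widetilde{g}$ is by definition an ordinary $cA_{r-1}$-singularity. Hensel's lemma is already built into that definition.

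\textbf{Step 3 (the axis).} Suppose $g(x_1,0,0,0)\equiv0$. Then the $x_1$-axis lies in $\{x_1x_2+g=0\}$. We track its image under the coordinate changes: its image is $\{x_1'=x_1+B(x_1,0,0,0),\ x_2'=A(x_1,0,0,0),\dots\}$. We need $A(x_1,0,0,0)=0$. This holds because $g(x_1,0,0,0)=x_1A(x_1,0,0,0)$ vanishes, and it persists through the iteration. If it fails at some stage, we apply an extra change of variables $\widetilde{x}_2\mapsto\widetilde{x}_2-\phi(\widetilde{x}_1)$, $\widetilde{x}_3$, $\widetilde{x}_4$ that straightens the smooth curve onto the axis. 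For this, note that the image curve lies in $\{\widetilde{x}_1\widetilde{x}_2+\widetilde{g}=0\}$ and is parametrized by $\widetilde{x}_1$. Hence $\widetilde{x}_3,\widetilde{x}_4$ along it are functions of $\widetilde{x}_1$, and we subtract them off, correcting $\widetilde{g}$ accordingly.

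The main obstacle is Step 1, namely checking that the error weights strictly increase, including in the degenerate case $r_2=0$, and proving convergence. I would first try to handle convergence by citing \cite[Thm.~3]{Mo} directly rather than re-deriving the estimates.
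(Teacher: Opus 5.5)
Your plan is essentially the paper's proof. Both use the same completion $(x_1+b)(x_2+a)-ab+c$, iterated: the paper tracks the strict increase of the orders of the mixed coefficients, while you track weights, which is equivalent because every variable has weight at most $r_1$, so your $r_2=0$ caveat is unnecessary. Both then perform the same final straightening of the image of the $x_1$-axis, where the correction term $\widetilde{x}_1 m$ is absorbed into $\widetilde{x}_2$ and the new $\widetilde{x}_2$ vanishes on the curve automatically because the curve lies on the hypersurface. The one correction concerns convergence: \cite[Thm.~3]{Mo} is not a convergence estimate for the infinite composite; the paper stops after finitely many steps, once the mixed terms lie in $(x)^n$, and applies that finite-determinacy result to $\varphi\equiv\psi \pmod{(x)^{n}}$, and because this last coordinate change need not preserve the axis, the extra straightening in the final assertion is required.
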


\begin{proof}
We rewrite $g(x_{1},x_{2},x_{3},x_{4})$ in the form
\begin{equation}
x_{1}a+x_{2}b+c,
\label{eq:cA1}
\end{equation}
where $a,b\in\mC\{x_{1},x_{2},x_{3},x_{4}\}$ and $c\in\mC\{x_{3},x_{4}\}$. 
By the condition~(2), the orders of $a$ and $b$ are at least $2$. 
Let $m\ (m\geq 2)$ be the minimum of the orders of $a$ and $b$. 
By the condition~(3), we have
\begin{equation}
\text{the weights of }a,b\text{ are greater than }r_{2},r_{1},\text{ respectively}.
\label{eq:abwt}
\end{equation}

We rewrite the sum of $x_{1}x_{2}$ and \eqref{eq:cA1} as
\begin{equation}
(x_{1}+b)(x_{2}+a)-ab+c.
\label{eq:cA1'}
\end{equation}
We then make a change of variables $x_{1}'=x_{1}+b$, $x_{2}'=x_{2}+a$, and write this as $x_{1}'x_{2}'+g'(x_{1}',x_{2}',x_{3},x_{4})$. 
We check that $x_{1}',x_{2}',x_{3},x_{4},g'$ satisfy the conditions $(1)$--$(3)$ of the lemma. 
As in \eqref{eq:cA1}, we write $g'=x_{1}'a'+x_{2}'b'+c'$. 
Since the order of $ab$ is at least $2m$, the orders of $a',b'$ are at least $2m-1$. 
Hence the orders of $a',b'$ are greater than those of $a,b$. 
Moreover, by \eqref{eq:abwt}, the weight of $ab$ is greater than $r$, so the weight of $g'$ is also $r$, and its weight-$r$ part coincides with that of $g$. 
Note that $x_{i}'\equiv x_{i}\pmod{(x)^{2}}\ (i=1,2)$.

Now we apply \cite[Thm.~3]{Mo} with $\varphi=x_{1}x_{2}+g$, $m=1$, and $b=2$ (all symbols on the left-hand side follow the notation of \cite[Thm.~3]{Mo}). 
Fix an integer $n$ satisfying the conditions of \cite[Thm.~3]{Mo} for $\varphi$ and $b=2$. 
By repeating the same procedure as above, we obtain an expression
\[
\widetilde{\varphi}:=\widetilde{x}_{1}\widetilde{x}_{2}+\widetilde{x}_{1}\widetilde{a}+\widetilde{x}_{2}\widetilde{b}+\widetilde{c}
\]
in which the orders of $\widetilde{a},\widetilde{b}$ are at least $n$. 
Let $\psi$ be the power series obtained by pulling back $\widetilde{x}_{1}\widetilde{x}_{2}+\widetilde{c}$ via the inverse of this change of variables. 
Then $\varphi\equiv\psi\pmod{(x)^{n}}$. 
Therefore, by \cite[Thm.~3]{Mo}, there exists a change of variables, congruent to the identity modulo $(x)^{2}$, that sends $\varphi$ to a nonzero constant multiple of $\psi$. 
Composing this with the change of variables sending $\varphi$ to $\widetilde{\varphi}$, we obtain the desired change of variables in the first assertion of the lemma.

Next, assume that $g(x_1,0,0,0)$ is identically zero. 
When $x_{2}=x_{3}=x_{4}=0$, we can write $\widetilde{x}_{i}=v_{i}x_{1}^{\alpha_{i}}\ (1\leq i\leq 4,\ \alpha_{i}\geq 2,\ v_{i}\in\mC\{x_{1}\})$. 
Here $v_{i}\ (i\geq 2)$ is either $0$ or a unit, and for $\widetilde{x}_{1}$ we have $\alpha_{1}=1$ and $v_{1}\in\mC\{x_{1}\}^{\times}$. 
Thus $\widetilde{x}_{i}=v_{i}\left(\frac{\widetilde{x}_{1}}{v_{1}}\right)^{\alpha_{i}}\ (i\geq 2)$. 
We then make a change of variables $\widehat{x}_{i}=\widetilde{x}_{i}-v_{i}\left(\frac{\widetilde{x}_{1}}{v_{1}}\right)^{\alpha_{i}}\ (i=3,4)$. 

Then
\[
\widetilde{x}_{1}\widetilde{x}_{2}+\widetilde{c}(\widetilde{x}_{3},\widetilde{x}_{4})
=\widetilde{x}_{1}\widetilde{x}_{2}+\widetilde{c}\left(\widehat{x}_{3}+v_{3}\left(\frac{\widetilde{x}_{1}}{v_{1}}\right)^{\alpha_{3}},\ \widehat{x}_{4}+v_{4}\left(\frac{\widetilde{x}_{1}}{v_{1}}\right)^{\alpha_{4}}\right).
\]
Expanding the right-hand side, we can write it as
\[
\widetilde{x}_{1}\widetilde{x}_{2}+\widetilde{c}(\widehat{x}_{3},\widehat{x}_{4})+\widetilde{x}_{1}m
\quad (m\in\mC\{x\}).
\]
Making a further change of variables $\widehat{x}_{2}=\widetilde{x}_{2}+m$, we obtain
\[
\widetilde{x}_{1}\widetilde{x}_{2}+\widetilde{c}(\widetilde{x}_{3},\widetilde{x}_{4})
=\widetilde{x}_{1}\widehat{x}_{2}+\widetilde{c}(\widehat{x}_{3},\widehat{x}_{4}).
\]
Setting $x_{2}=x_{3}=x_{4}=0$, we have $\widehat{x}_{3}=\widehat{x}_{4}=0$, and hence $\widetilde{x}_{1}\widehat{x}_{2}=0$. 
Since this holds identically under $x_{2}=x_{3}=x_{4}=0$ and $\widetilde{x}_{1}=v_{1}x_{1}$, we must have $\widehat{x}_{2}=0$.
\end{proof}

\subsection{Characterization of simple $(2,1)$-contractions}
It is possible to characterize simple $(2,1)$-contractions by the final assertion of  Lemma~\ref{lem:canonical form}; however, checking the conditions to apply it requires a somewhat large amount of computation. 
The following proposition gives a characterization of simple $(2,1)$-contractions that focuses on the source rather than the target of the contraction unlike Lemma ~\ref{lem:canonical form}. The amount of computation needed to check the conditions turns out to be relatively small.

\begin{prop}
\label{prop:simple cont}
Let $(Y,o)$ be an analytic germ of a normal $3$-fold, and let $f\colon X\to (Y,o)$ be a $K$-negative divisorial contraction that contracts a prime divisor $E$ to a curve $C$. 
Assume further that the following conditions are satisfied:

\begin{enumerate}[$(1)$]

\item The point $o$ is a Gorenstein terminal singularity (a cDV singularity) of $\,Y$.

\item The curve $\,C$ is smooth at $o$.

\item The variety $\,X$ is smooth outside $f^{-1}(o)$, and the fiber of $f$ over a point of $\,C$ other than $o$ is $\mP^{1}$.
\item $E$ is a normal surface (actually, $E$ is already Cohen-Macaulay by \cite[Cor.~5.25]{KoMo2}).
\item The fiber $f^{-1}(o)$ contains exactly one singular point $p$ and it is a $\nicefrac{1}{r}(1,1,r-1)$-singularity for some $r\geq 2$. Moreover, $f^{-1}(o)$ consists of $\,r$ copies of $\ \mP^{1}$ that intersect only at $p$.

\item Let $\,h_{1}\colon X_{1}\to X$ be the weighted blow-up at $p$ with weights $\nicefrac{1}{r}(1,1,r-1)$, let $F$ be its exceptional divisor (which is isomorphic to $\mP(1,1,r-1)$), and let $E_{1}\subset X_{1}$ be the strict transform of $\,E$. 
Then $E_{1}|_{F}\in|\sO(r-1)|$ and $E_{1}|_{F}\simeq\mP^{1}$, and the strict transforms of the $r$ irreducible components of $f^{-1}(o)$ intersect $E_{1}|_{F}$ transversely at $r$ distinct points. 
(Note: when $r=2$, the condition $E_{1}|_{F}\simeq\mP^{1}$ follows from $E_{1}|_{F}\in|\sO(r-1)|$. 
When $r\geq3$, the condition $E_{1}|_{F}\simeq\mP^{1}$ is equivalent, under $E_{1}|_{F}\in|\sO(r-1)|$, to the condition that $E_{1}$ does not pass through the singular point of $F$.)
\end{enumerate}

Then the following statements hold:

\begin{enumerate}[$(a)$]

\item The divisor $E$ is smooth along $\,f^{-1}(o)$ outside $p$.

\item The morphism $f$ is a simple $(2,1)$-contraction.

\end{enumerate}
\end{prop}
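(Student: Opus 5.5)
The plan is to prove (a) by a multiplicity computation on the fibre, and (b) by putting $(Y,o)$ in normal form and using the uniqueness of the divisorial contraction with given centre and exceptional divisor.

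\emph{Step 1: the fibre is reduced.} Write $f^{-1}(o)=l_{1}\cup\dots\cup l_{r}$. By (3), a general fibre $\ell\simeq\mP^{1}$ of $E\to C$ lies in the smooth locus of $X$. Since $f$ is $K$-negative with one-dimensional fibres, adjunction on $X$ gives $-K_{X}\cdot\ell=1$. By (4), $E$ is Cohen--Macaulay and $C$ is smooth at $o$, so $E\to C$ is flat near $o$. Hence the scheme-theoretic fibre over $o$ is numerically equivalent to $\ell$, and as a cycle it is $\sum m_{i}l_{i}$ with $m_{i}\ge1$. Every $l_{i}$ passes through $p$, where the local index of $K_{X}$ is $r$ by (5). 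So $-K_{X}\cdot l_{i}\in\frac1r\mathbb{Z}_{>0}$, hence $-K_{X}\cdot l_{i}\ge\frac1r$. From $\sum m_{i}(-K_{X}\cdot l_{i})=1$ and the existence of $r$ components, we get $m_{i}=1$ and $-K_{X}\cdot l_{i}=\frac1r$ for all $i$. As a consistency check, I would also verify via $K_{X_{1}}=h_{1}^{*}K_{X}+\frac1rF$ and condition (6) that the strict transforms $l_{i}'$ meet $F$ transversally with the expected intersection numbers.

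\emph{Step 2: proof of (a).} Take $q\in l_{i}\setminus\{p\}$. By (5), $q$ lies on no other $l_{j}$. By Step~1, the fibre of the flat morphism $E\to C$ through $q$ is reduced and smooth at $q$, namely $l_{i}$ itself. A flat morphism from a surface to a smooth curve whose fibre is smooth at $q$ is smooth at $q$. Therefore $E$ is smooth at $q$, which is (a).

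\emph{Step 3: normal form of $Y$.} Take a general hyperplane section $H\ni o$ of $Y$, transversal to $C$ at $o$, and let $H_{X}$ be its strict transform. Then $H_{X}\to H$ contracts exactly $l_{1},\dots,l_{r}$, and $H_{X}$ has at $p$ a singularity of type $\frac1r(1,r-1)$. Using $E\cdot l_{i}$ and $K_{X}\cdot l_{i}$ from Step~1, I compute the self-intersections of the $l_{i}$ on $H_{X}$. On the minimal resolution this gives the $A_{r-1}$ chain with $r$ $(-1)$-curves attached as forced by (6). Contracting, $H$ is Du Val, and I expect it to be of type $A_{r-1}$. By (1), $(Y,o)$ is then a $cA_{r-1}$ point containing the smooth curve $C$.

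I would then choose coordinates with $C=\{y=z=w=0\}$ and write $Y=\{xy+g=0\}$. Assigning weights $(0,r,1,1)$, I would check the hypotheses of Lemma~\ref{lem:canonical form}, including its final assertion, which keeps $C$ as the $x$-axis. This yields $xy+\widetilde g(z,w)$, with $\widetilde g_{r}$ a product of $r$ linear forms. These linear forms are pairwise coprime because the $r$ components $l_{i}$ meet $E_{1}|_{F}$ at $r$ \emph{distinct} points by (6). This identification, from the data on $X$ to the weighted leading part $g_{r}$, is the step I expect to be the main obstacle. My first attempt would be to read $g_{r}$ off from $E_{1}|_{F}\simeq\mP^{1}$ and the $r$ transversal points, viewing the tangent cone of $E$ at $p$ as the exceptional curve of the weighted blow-up.

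\emph{Step 4: comparison with the simple $(2,1)$-contraction.} By the computation recorded after the Definition, the simple $(2,1)$-contraction $\widetilde H\to H$ of $H=\{xy+\widetilde g=0\}$ is a $K$-negative divisorial contraction onto the $x$-axis $C$. Both $X$ and $\widetilde H$ are $\mQ$-factorial over $Y$ with relative Picard number one, and both have the same exceptional valuation: by (a), $E$ is generically the blow-up of $C$ away from $o$. Hence each is $\operatorname{Proj}_{Y}\bigoplus_{m}f_{*}\sO_{X}(-mE)$, the symbolic blow-up of $C$. Therefore $X\simeq\widetilde H$ over $Y$, so $f$ is a simple $(2,1)$-contraction, proving (b).
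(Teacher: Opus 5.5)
The gap is in Step~3, and it is the heart of (b). Steps 1, 2 and 4 are sound and essentially coincide with the paper:
\begin{itemize}
\item the computation $m_i=1$, $-K_X\cdot l_i=\frac1r$;
\item the deduction of (a) from the fibre being a reduced Cartier divisor on the normal (hence Cohen--Macaulay) surface $E$, smooth away from $p$;
\item the final appeal to uniqueness of the divisorial extraction centred at $C$.
\end{itemize}

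\textbf{The missing step.} The normal form you want to feed into Lemma~\ref{lem:canonical form} is the following: coordinates in which $C$ is the $x$-axis and $Y=\{xy+g=0\}$, where $g$ has $(0,r,1,1)$-weight exactly $r$ and its weight-$r$ part involves only $z,w$ with $r$ distinct linear factors. This is essentially equivalent to (b) itself, since it says that the symbolic powers of the ideal of $C$ are the weighted ideals. All the hypotheses live on $X$, and nothing in your plan turns them into information about the equation of $Y$ relative to $C$. ``Reading $g_r$ off from $E_1|_F$'' presupposes that $f$ is already known to be a weighted blow-up in suitable coordinates, so it is circular. Even choosing coordinates that make $C$ the $x$-axis while keeping the shape $xy+g$ needs to know how $C$ sits at $o$.

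\textbf{The hyperplane section does not help.} At best it shows that $o$ is a $cA_{r-1}$ point, which says nothing about the position of $C$. As written it also contains an error. A general Cartier divisor through a $\frac1r(1,1,r-1)$ point has the $T$-singularity $\frac{1}{r^{2}}(1,r-1)$ (resolution chain $[r+2,2,\dots,2]$), not $\frac1r(1,r-1)$. With the correct type, the strict transforms of the $l_i$ are $(-1)$-curves meeting the $(-(r+2))$-curve, and contracting them does give $A_{r-1}$.

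\textbf{What the paper does instead.} It factors $f$ explicitly and reduces to a divisorial contraction to the \emph{point} $o$, where a classification is available.
\begin{enumerate}
\item Since $K_Y\sim0$ near $o$, we have $E\sim K_X\sim(r-1)(-K_X)$ near $p$. Together with (6) this gives $E_1=h_1^{*}E-\frac{r-1}{r}F$, hence $E_1\cdot\delta_i'=-1$ and $-K_{X_1}\cdot\delta_i'=0$.
\item Because $E_1$ is smooth along $\delta_i'$, one gets $\sN_{\delta_i'/X_1}\simeq\sO(-1)^{\oplus2}$, so the $r$ curves $\delta_i'$ can be flopped by an Atiyah flop.
\item The strict transform $E_2$ is then a $\mP^1$-bundle over $C$, which is contracted to a smooth curve $C'$.
\item The induced $h_3\colon X_3\to Y$ is a divisorial contraction to $o$ with discrepancy $1$. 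Its exceptional divisor is $F_3\simeq\{xy+g(z,w)=0\}\subset\mP(1,r-1,1,1)$, where the zeros of $g$ are the $r$ distinct points of (6).
\item Using \cite{MoPr} to see that $o$ is a $cA$ point, the classification in \cite{Ka} identifies $h_3$ with the $(1,r-1,1,1)$ weighted blow-up of $\{xy+h(z,w)=0\}$, with $h_r=g$.
\item Since $C'$ meets $F_3$ at the $x$-point, $C$ can be straightened to $\{y'=z'=w'=0\}$ while keeping the form $xy'+h(z',w')$.
\end{enumerate}
Only at that point does your Step~4 finish the proof.
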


\begin{rem*}
Though the main point is to prove $(b)$, the fact that $(a)$ follows from the other conditions is also useful, since it reduces the number of conditions that need to be checked when applying the proposition.
\end{rem*}

\begin{proof}
The main part of the proof consists in constructing an explicit factorization of $f\colon X\to Y$ within the framework of \cite[Thm.~3.3]{ChH}.

First we proceed with the conditions~(4) and (5) in mind. 
Write 
\[
f^{-1}(o)=\sum_{i=1}^{r} m_i \delta_i
\]
in $E$, where $\delta_i$ are irreducible components, and $m_i\in\mN$.
Then $-K_X\cdot\sum_{i=1}^{r} m_i\delta_i=1$, and since $r(-K_X\cdot\delta_i)\in\mN$ for all $i$, we obtain
\begin{equation}
\forall\,i,\ m_i=1,\quad -K_X\cdot\delta_i=\nicefrac{1}{r}.
\label{eq:1/r}
\end{equation}
This implies that $f^{-1}(o)$ is smooth outside $p$. 
Since $f^{-1}(o)$ is a Cartier divisor of $E$, the assertion~(a) follows.

Let $\delta_i'\,(1\leq i\leq r)$ be the strict transform of $\delta_i$ on $X_1$.
By the condition~(6), \eqref{eq:1/r}, and $K_{X_{1}}=h_{1}^{*}K_{X}+\nicefrac{1}{r}F$, we have $-K_{X_1}\cdot\delta_i'=0$. 
Hence $-K_{X_1}$ is nef over $Y$. Note that, from $K_X=f^{*}K_Y+E$ and \eqref{eq:1/r}, we obtain
\begin{equation}
	\forall\,i,\ E\cdot\delta_i=-\nicefrac{1}{r}.
	\label{eq:-1/r}
\end{equation}
By the condition~(1), $K_Y\sim0$ near $o$. Then, since $K_X=f^{*}K_Y+E$,  we have $E\sim K_X$ near $f^{-1}(o)$. 
Moreover, near $p$, we have $rK_X\sim0$, hence $E\sim(r-1)(-K_X)$. 
Together with the condition~(6), this gives
\begin{equation}
	E_{1}=h_{1}^{*}E-\nicefrac{r-1}{r}F.
	\label{eq:E1}
\end{equation}
Then, by the condition~(6), \eqref{eq:-1/r} and \eqref{eq:E1}, we have $E_1\cdot\delta_i'=-1$. 
Since $E_1$ is smooth along $\delta_i'$, we obtain
\begin{equation}
\sN_{\delta'_i/X_1}=\sO_{\mP^1}(-1)^{\oplus 2}.
\label{eq:-1-1}
\end{equation}
All constructions below are carried out relatively over $Y$. 
We can check that the divisor $-E_1-F$ is nef over $Y$ and numerically trivial only on $\delta_i'\ (1\leq i\leq r)$. 
Therefore, by the base point free theorem, $-E_1-F$ is semiample over $Y$, and a suitable multiple defines a birational morphism contracting exactly the curves $\delta_i'$. 
This is an $E_1$-flopping contraction, and by \eqref{eq:-1-1}, its flop $X_1\dashrightarrow X_2$ is an Atiyah flop.

Then the strict transform $E_2$ of $E_1$ is a $\mP^1$-bundle over $C$, and $X_2$ is smooth along $E_2$. 
We can check that the divisor $-K_{X_2}$ is nef over $Y$, and $-K_{X_2}+E_2$ is semiample over $Y$ and numerically trivial only on the fibers of $E_2\to C$. 
Let $X_2\to X_3$ be the contraction defined by a multiple of $-K_{X_2}+E_2$. 
The image of $E_2$ is a curve $C'$ mapped isomorphically onto $C$ by $X_3\to Y$, and $X_3$ is smooth along $C'$.

Let $F_3$ be the strict transform of $F$ on $X_3$. 
It is obtained by blowing up $F$ at the $r$ points $F\cap\delta_i'\ (1\leq i\leq r)$ and then contracting the $(-1)$-curve given by the strict transform of $E_1|_F$. 
Now we identify $E_1|_F$ with $\mP^1$ with homogeneous coordinates $z,w$, and regard the points $F\cap\delta_i'$ as the zero locus of a homogeneous polynomial $g(z,w)$ of degree $r$. 
Consider the surface
\[
F_3'=\{xy+g(z,w)=0\}\subset\mP(1,r-1,1,1),
\]
where $x,y,z,w$ have weights $(1,r-1,1,1)$. 
We shall check that $F_3'\simeq F_3$. 
This can be seen by tracking the toric VGIT determined by the matrix
\[
\begin{array}{@{} c *{5}{c} c @{}}
  & t & x & y & z & w & \\
  \multirow{2}{*}{$\Bigl(\!$}
   & 0  & 1  & r-1  & 1  & 1
   & \multirow{2}{*}{$\!\Bigr),$} \\
   & -1 & -1 & 0 & 0 & 0 &
\end{array}
\]
which gives the diagram
\[
\xymatrix{
& \widetilde{\mP}\ar[dr]\ar[dl]\\
\mP(1,r-1,1,1)&&\mP(r-1,1,1).
}
\]
Then the strict transform $F_3''$ of $F_3'$ under $\widetilde{\mP}\to\mP(1,r-1,1,1)$ is $\{xy+tg(z,w)=0\}$. 
We verify that $F_3''\to F_3'$ is the blow-up at the $x$-point, and $F_3''\to\mP(1,r-1,1)$ is the blow-up along $\{y=g(z,w)=0\}$, hence $F_3'\simeq F_3$.

Let $h_3\colon X_3\to Y$ be the morphism induced by the above construction. 
It is a $K$-negative divisorial contraction sending $F_3$ to $o$. 
For the weighted blow-up $h_1$, the weighted multiplicity of $E$ is $\nicefrac{r-1}{r}$ by \eqref{eq:E1} and the discrepancy is $\nicefrac{1}{r}$, hence
\[
K_{X_3}=h_3^{*}K_Y+F_3.
\]
On the other hand, by the classification in \cite[Thm.~1.1]{MoPr}, the contraction $f\colon X\to Y$ corresponds to the case $9^{\circ}:N\times(k1A)$ in \cite{MoPr}. 
Thus, as stated below \cite[Cor.~1.2]{MoPr}, a general member of $|-K_X|$ contains none of the components of $f^{-1}(o)$, has an $A_{r-1}$-singularity at $p$, and is isomorphic to its image in $Y$. 
Therefore, $Y$ has a $cA$-singularity at $o$. 
Then, by \cite[Thm.~1.2 and 1.3]{Ka} and the description of $F_3$, it holds that
\[
(Y,o)\simeq(\{xy+h(z,w)=0\},o),
\]
and $h_3\colon X_3\to Y$ is isomorphic to the weighted blow-up with respect to the weights $(1,r-1,1,1)$, where
$h(z,w)$ has weight $r$, and we may assume that its degree-$r$ part coincides with the above $g(z,w)$.

From now on, we identify $h_3$ with this weighted blow-up. 
Then
\[
X_3=(\{xy+t^{-r}h(tz,tw)=0\}\setminus\{x=y=z=w=0\})/\mC^{*},
\]
where $\mC^{*}$ acts on $(t,x,y,z,w)$ with weights $(-1,1,r-1,1,1)$.

We determine the defining equations of $C$. 
Since $C$ is smooth at $o$, it is defined by three functions whose linear parts involve three of the variables $x,y,z,w$. 
Suppose these variables are $y,z,w$. 
Then, by the implicit function theorem, $C$ is defined by three functions of the following type:
\begin{equation}
y=v_1 x^{\alpha_1},\quad z=v_2 x^{\alpha_2},\quad w=v_3 x^{\alpha_3},
\label{eq:xzw}
\end{equation}
where $v_i$ is either $0$ or a unit, and $\alpha_i\in\mN$ for $i=1,2,3$. 
Making the change of variables $z'=z-v_2 x^{\alpha_2}$ and $w'=w-v_3 x^{\alpha_3}$, we can write
\[
xy+h(z,w)=xy+h(z',w')+x\, m(x,y,z,w).
\]
Further setting $y'=y+m$, we obtain $xy+h(z,w)=xy'+h(z',w')$. 
Since $C$ satisfies $z'=w'=0$ and is contained in $Y$, it follows that $xy'=0$. 
Since this holds on $C$ independently of $x$, we must have $y'=0$. 
Thus $C$ is defined by $y'=z'=w'=0$.

It remains to consider the case where $x$ appears among the linear terms. 
For example, suppose they are $x,y,z$. 
Then, by the implicit function theorem, $C$ is defined by three functions of the following type:
\[
x=v_1 w^{\alpha_1},\quad y=v_2 w^{\alpha_2},\quad z=v_3 w^{\alpha_3},
\]
where $v_i$ and $\alpha_i$ have the same properties as before. 
The inverse images of these under $h_3$ satisfy
\[
tx=v_1(tw)^{\alpha_1},\quad t^{r-1}y=v_2(tw)^{\alpha_2},\quad tz=v_3(tw)^{\alpha_3}.
\]
From the first equation, the strict transform $C'$ of $C$ in $X_3$ satisfies
\begin{equation}
x=v_1 t^{\alpha_1-1} w^{\alpha_1}.
\label{eq:xv1}
\end{equation}
Note that $C'$ meets the $h_3$-exceptional divisor $\{t=0\}$ at the $x$-point. 
Therefore, in \eqref{eq:xv1}, we must have $v_1\neq0$ and $\alpha_1=1$. 
Then we may exchange the roles of $x$ and $w$, reducing to the case of \eqref{eq:xzw}. 
All other cases are reduced similarly to \eqref{eq:xzw}.

Therefore, by the uniqueness of the divisorial extraction centered at $C$ (see \cite[Thm.~4.9]{KoMo1}), we conclude that $f$ is a simple $(2,1)$-contraction.
\end{proof}

\end{document}